\newcommand{\de}{\partial}
\newcommand{\ddbar}{\sqrt{-1} \partial \overline{\partial}}
\newcommand{\Ric}{\mathrm{Ric}}
\newcommand{\ov}[1]{\overline{#1}}
\newcommand{\tr}[2]{\textrm{tr}_{#1}{#2}}
\newcommand{\ti}[1]{\widetilde{#1}}
\newcommand{\vp}{\varphi}
\newcommand{\ve}{\varepsilon}
\newcommand{\e}{\varepsilon}
\newcommand{\p}{\partial}
\newcommand{\pbar}{\overline{\partial}}
\renewcommand{\leq}{\leqslant}
\renewcommand{\geq}{\geqslant}
\newcommand{\be}{\begin{equation}}
\newcommand{\ee}{\end{equation}}
\newcommand{\PSH}{\textrm{PSH}}
\newcommand{\R}{\mathbb{R}}
\newcommand{\C}{\mathbb{C}}
\begin{document}
\newcounter{remark}
\newcounter{theor}
\setcounter{theor}{1}
\newtheorem{claim}{Claim}
\newtheorem{theorem}{Theorem}[section]
\newtheorem{lemma}[theorem]{Lemma}
\newtheorem{corollary}[theorem]{Corollary}
\newtheorem{proposition}[theorem]{Proposition}
\newtheorem{question}{question}[section]
\newtheorem{defn}{Definition}[theor]
\newtheorem{remark}[theorem]{Remark}

\numberwithin{equation}{section}

\title{$C^{1,1}$ regularity of geodesics of singular K\"{a}hler metrics}

\begin{abstract}We show the optimal $C^{1,1}$ regularity of geodesics in nef and big cohomology class on K\"ahler manifolds away from the non-K\"ahler locus, assuming sufficiently regular initial data. As a special case, we prove the $C^{1,1}$ regularity of geodesics of K\"ahler metrics on compact K\"ahler varieties away from the singular locus. Our main novelty is an improved boundary estimate for the complex Monge-Amp\`ere equation that does not require strict positivity of the reference form near the boundary. We also discuss the case of some special geodesic rays.
\end{abstract}

\author[J. Chu]{Jianchun Chu}
\address{Department of Mathematics, Northwestern University, 2033 Sheridan Road, Evanston, IL 60208}
\email{jianchun@math.northwestern.edu}

\author[N. McCleerey]{Nicholas McCleerey}
\address{Department of Mathematics, Northwestern University, 2033 Sheridan Road, Evanston, IL 60208}
\email{njm2@math.northwestern.edu}
\thanks{Partially supported by NSF RTG grant DMS-1502632.}

\subjclass[2010]{Primary: 32Q15; Secondary: 32W20, 53C22, 35J25.}

\maketitle

\section{Introduction}

Recall that if $(X^n,\omega_X)$ is a compact K\"ahler manifold without boundary, then the Mabuchi geodesic between two K\"ahler metrics cohomologous to $\omega_X$ \cite{Mab} can be given as the solution to the following Dirichlet problem:
\[\begin{cases}
(\pi^*\omega_X + i\p\pbar\Phi)^{n+1} = 0, \\
\Phi|_{\p(X\times A)} = \vp \in C^{\infty}(\overline{X\times A})\cap\PSH(X\times A,\pi^*\omega_X),
\end{cases}\]
where $A\subset \C$ is an annulus, $\pi: X\times A\rightarrow X$ is the projection map, and $\vp$ is a smooth, rotationally symmetric function determined solely by the two given K\"ahler metrics \cite{Se, Don}. There has been a great deal of work done to establish regularity and positivity properties for such geodesics -- see \cite{Ber2,Blo,CTW1,CTW,CZ,Da1,He,PS,PS2,RWN,To}. In particular, the recent work of Chu-Tosatti-Weinkove \cite{CTW1} has established that solutions are $C^{1,1}$ regular (based on their earlier work \cite{CTW0}), which is known to be optimal by examples of Lempert-Vivas \cite{LV}, Darvas-Lempert \cite{DL}, and Darvas \cite{Da}.

After adding and subtracting a K\"ahler potential pulled back from the annulus (i.e. a solution to the Laplace equation with right-hand side $1$), the geodesic equation can be rewritten to have a reference form which is an actual K\"ahler metric $\ti\omega$ on $X\times A$, without changing the boundary data:
\[\begin{cases}
(\ti \omega + i\p\pbar\ti\Phi)^{n+1} = 0, \\
\ti \Phi|_{\p(X\times A)} = \vp.
\end{cases}\]

Thus, if one is interested in studying geodesics of K\"ahler metrics, one is naturally let to consider the classical Dirichlet problem for the homogeneous Monge-Amp\`ere operator:
\begin{equation}\label{Chpt2_1}
\begin{cases}
(\omega + i\p\pbar V)^{n} = 0, \\
V|_{\p M} = \vp\in C(\overline{M})\cap\PSH(M,\omega),
\end{cases}
\end{equation}
where here we are now working on the compact K\"ahler manifold with pseudoconcave boundary $(M^n, \omega)$. Bremmerman \cite{Bre} showed that the solution to \eqref{Chpt2_1} (if it exists) can be expressed as a supremum of $\omega$-psh functions, an idea based on the classical Perron method for subharmonic functions:
\begin{equation}\label{Chpt2_2}
V = \sup\{ v\in \PSH(M, \omega)\ |\ v^*|_{\p M} \leq \vp\}^*,
\end{equation}
where we write $f^*$ to refer to the upper-semicontinuous regularization of $f$. The proof of this is essentially the well-known comparison principle, assuming again that a solution to \eqref{Chpt2_1} exists. The assumption that a solution exists is non-trivial -- unlike the classical case of subharmonic functions, it can fail in a number of delicate ways (see for instance the lecture notes of B\l ocki \cite{Bl_notes}). In particular, the fact that we are asking for the existence of a bounded $\omega$-psh function on $M$ is a non-trivial constraint on $\omega$, and implies it has a large degree of ``cohomological positivity."

Recently, Berman \cite{Ber} has found a new method to approximate such envelopes, in the setting of a compact K\"ahler manifold $(X^n,\omega_X)$ without boundary. His technique is inspired by the ``zero temperature limit" of statistical mechanics, and has the virtue of working well even when one is considering cohomology classes $[\alpha]$ which are only nef and big. Explicitly, he considers solutions to the following family of equations:
\[
(\alpha+ i\p\pbar u_\beta)^n = e^{\beta (u_\beta - f)}\omega_X^n
\]
as the parameter $\beta\rightarrow\infty$ (the physical interpretation is that $\beta = 1/T$, where $T$ is the temperature of the system). He then shows that this family of solutions will converge pointwise to the envelope:
\begin{equation}\label{Chpt2_3}
P_\alpha(f) = \sup\{ v\in\PSH(X,\alpha)\ |\ v\leq f\},
\end{equation}
when $f\in C^\infty(X)$. $P_\alpha(f)$ is clearly the proper analogue of \eqref{Chpt2_2} on a manifold without boundary. Further, Berman shows that this family is well-suited to {\it a priori} estimates under small perturbations of $\alpha$, which allows him to deduce a uniform Laplacian bound when $\alpha$ is nef and big. Later on, this was improved by Chu-Tosatti-Weinkove \cite{CTW} to a full Hessian bound along the path, which allows them to show $C^{1,1}$ regularity of the envelope \eqref{Chpt2_3}.

Our primary interest in this paper will be to study geodesics of K\"ahler metrics on singular varieties, defined in the sense of Moishezon \cite{Mo}. We refer the reader to \cite{Da2,DNG} for previous work in this setting (see also \cite{DNL}). Mirroring the above discussion, we will thus want to study a more general Dirichlet problem, similar to \eqref{Chpt2_1}:
\begin{equation}\label{Chpt2_4}
\begin{cases}
(\alpha + i\p\pbar V)^{n} = 0, \\
V|_{\p M} = \vp\in\PSH(M,\alpha).
\end{cases}
\end{equation}
The difference between \eqref{Chpt2_1} and \eqref{Chpt2_4} is that we do not assume that the form $\alpha$ is positive, even after allowing for the addition/subtraction of an exact form. As previously mentioned, this means that solutions to \eqref{Chpt2_4} might not exist if we assume that $\vp$ is bounded. As such, we are forced to allow $\vp$ to be potentially unbounded from below, even near $\p M$. This will clearly force any solution $V$ to also be unbounded from below -- but this is a considerable issue, as there is no known Monge-Amp\`ere operator $(\alpha + i\p\pbar V)^n$ which is always defined for unbounded $\alpha$-psh functions on $M$.

It is thus much better to start directly with the envelope definition:
\begin{equation}\label{Chpt2_5}
V = \sup\{ v\in \PSH(M, \alpha)\ |\ v^*|_{\p M} \leq \vp\}^*.
\end{equation}
If $\alpha$ is assumed to be ``big" (see Section \ref{background}), then $V$ will be locally bounded on the complement of a closed, pluripolar set $E$. It is then relatively straight forward to show that $V$ will solve \eqref{Chpt2_4} on $M\setminus E$, using the Monge-Amp\`ere operator of Bedford--Taylor \cite{BT}. Thus, it is natural to say $V$ is a ``solution" to \eqref{Chpt2_4} in this case (cf. \cite{DDL}).

Our main technical result, Theorem \ref{theorem}, establishes $C^{1,1}$-regularity for \eqref{Chpt2_5} locally on the set where $\alpha$ is strictly positive, under a number of additional assumptions. These conditions are quite natural -- the first is that the form $\alpha$ be ``nef and big" on $M$, so that we can approximate $\alpha$ by K\"ahler forms (again, see {Section} \ref{background} for terminology). The second is that $\vp$ be similarly well-approximated by smooth functions (condition (a) in Theorem \ref{theorem}). And finally, we will need to assume that $\vp$ {possess} some degree of positivity -- specifically, we will ask that $\alpha + i\p\pbar\vp$ be strictly positive on the complement of a fixed pluripolar set (which we allow to intersect $\p M$), and that this positivity be quantifiable (condition (b) in Theorem \ref{theorem}).

Similar to the smooth case, we then use our regularity result for \eqref{Chpt2_5} to show regularity of geodesics of K\"ahler metrics on compact K\"ahler varieties, as intended:

\begin{theorem}\label{cor}
Given two cohomologous K\"ahler metrics $\omega_1,\omega_2$ on a singular K\"ahler variety $X$, the geodesic connecting them is in $C^{1,1}_{\textrm{loc}}(X_\mathrm{reg}\times A)$, where $A\subset\C$ is an annulus and $X_\mathrm{reg}$ is the smooth part of $X$.
\end{theorem}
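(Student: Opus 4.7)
The plan is to deduce Theorem \ref{cor} from the main technical result, Theorem \ref{theorem}, by pulling everything back to a smooth resolution. Let $\pi \colon \widetilde X \to X$ be a resolution of singularities with $\widetilde X$ a smooth K\"ahler manifold, chosen so that $\pi$ restricts to a biholomorphism over $X_{\mathrm{reg}}$ and so that $E := \pi^{-1}(X_{\mathrm{sing}})$ is an analytic (in fact divisorial) subset of $\widetilde X$. Because the $\omega_j$ are K\"ahler on $X$ in the sense of Moishezon, their local potentials are smooth strictly psh functions on ambient embeddings, so the pullbacks $\widetilde \omega_j := \pi^*\omega_j$ are smooth and semi-positive on $\widetilde X$, strictly positive precisely on $\widetilde X \setminus E$, and their common cohomology class is nef and big on $\widetilde X$.

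I would then formulate the geodesic as an envelope on $M := \widetilde X \times A$. Taking $\alpha$ to be the natural pullback of $\omega_1$ to $M$, modified by the annulus contribution as in the discussion preceding \eqref{Chpt2_1}, $\alpha$ is smooth, nef and big on $M$, and strictly positive exactly on $\pi^{-1}(X_{\mathrm{reg}}) \times A$. The boundary datum $\widetilde \vp$ on $\widetilde X \times \p A$ is built from the K\"ahler potentials of $\omega_1, \omega_2$ pulled back via $\pi$, and is in particular smooth on $\p M$. The Mabuchi geodesic $\Phi$ on $X$ is characterized by \eqref{Chpt2_5} for the corresponding data on $X \times A$ (cf. \cite{Da2,DNG}); since $\pi$ is a bimeromorphism and the envelope \eqref{Chpt2_5} behaves well under birational pullback for nef and big classes, $\widetilde \Phi := \pi^* \Phi$ coincides with the envelope on $M$ for $(\alpha, \widetilde \vp)$.

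Next I would verify conditions (a) and (b) of Theorem \ref{theorem} for $\widetilde \vp$. Condition (a) is immediate: $\widetilde \vp$ is smooth on $\p M$, so it is its own smooth approximation. For condition (b), along the boundary component corresponding to $\omega_j$ one has $\alpha + i\p\pbar \widetilde \vp = \pi^* \omega_j$, which is strictly positive on $\pi^{-1}(X_{\mathrm{reg}}) \times \p A$ and degenerates only along $E \times \p A$, an analytic and therefore pluripolar subset of $\p M$. The quantitative part of (b) should reduce to comparing $\pi^* \omega_j$ with a fixed K\"ahler form on $\widetilde X$, with the loss of positivity along $E$ controlled by a quasi-psh weight built from the defining sections of the components of $E$.

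Once (a) and (b) are in place, Theorem \ref{theorem} yields $\widetilde \Phi \in C^{1,1}_{\mathrm{loc}}$ on the open subset of $M$ where $\alpha$ is strictly positive, namely $\pi^{-1}(X_{\mathrm{reg}}) \times A$; composing with the biholomorphism $\pi \colon \pi^{-1}(X_{\mathrm{reg}}) \to X_{\mathrm{reg}}$ then descends this regularity to $\Phi \in C^{1,1}_{\mathrm{loc}}(X_{\mathrm{reg}} \times A)$. The main obstacle I expect is making the quantitative condition (b) precise in the form demanded by Theorem \ref{theorem}: one must exhibit a single weight that simultaneously controls $\pi^* \omega_1$ and $\pi^* \omega_2$ near $E$ on both boundary components. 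The identification of $\widetilde \Phi$ with the envelope on $M$ is comparatively routine, once one fixes the definition of geodesic on a singular variety used in \cite{Da2,DNG}.
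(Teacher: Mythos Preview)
Your overall strategy is correct and matches the paper's: pull back to a resolution, recognize that $\pi^*\omega_j$ is smooth, semipositive, and belongs to a nef and big class, recast the geodesic as the envelope \eqref{Chpt2_5} on $\widetilde X\times A$, and then appeal to Theorem~\ref{theorem}. However, the proposal is incomplete at exactly the place you flag as the ``main obstacle,'' and this is in fact the entire content of the paper's proof. Condition~(b) of Theorem~\ref{theorem} demands a \emph{quantitative} lower bound of the form $\pi^*\omega_j \ge c\,e^{B_0\psi}\omega$ for an exponentially smooth quasi-psh $\psi$ singular along $E$. Saying that the loss of positivity ``should be controlled by a quasi-psh weight built from the defining sections'' is precisely the statement that has to be proved, not assumed. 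The paper obtains it by a genuinely new computation: writing $(\pi^*\omega_0)^n$ locally via the Jacobian of the resolution map into ambient charts, applying the Cauchy--Binet formula to get $(\pi^*\omega_0)^n \ge c\sum_k |\det J_k|^2\,\omega_{\mathrm{Eucl}}^n$ where the $J_k$ are $n\times n$ minors, and then using that the common zero locus of these minors is $E$ (since $\pi$ is a biholomorphism exactly off $E$) together with Weierstrass preparation and the simple normal crossing structure to conclude $(\pi^*\omega_0)^n \ge b\prod_k|s_k|_{h_k}^{a_k}\,\omega^n$. Proposition~\ref{forms} then converts this into condition~\eqref{key}. Without this argument your proposal is only a program.

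There is also a smaller gap in your treatment of condition~(a). Theorem~\ref{theorem} requires $\vp$ to be an $\alpha$-psh function on all of $M$ (a subsolution), not merely smooth boundary data on $\partial M$; the approximants $\vp_\e$ must likewise be global. The paper handles this by building a smooth subsolution on $\widetilde X\times A$ via a regularized maximum of $\pi^*\vp_1 - Ct$ and $\pi^*\vp_2 - C(1-t)$, plus a harmonic function pulled back from the annulus, and then takes $\vp_\e=\vp$ for all $\e$. This step is routine but should be stated, since the way you phrase ``$\widetilde\vp$ is smooth on $\partial M$, so it is its own smooth approximation'' does not match the hypotheses of Theorem~\ref{theorem}.
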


In fact, we can do even better than Theorem \ref{cor} -- Theorem \ref{theorem} is flexible enough to prove regularity of geodesics between K\"ahler currents with analytic singularities in nef and big classes:

\begin{theorem}\label{fake}
 Let $(X,\omega)$ be a smooth K\"ahler manifold without boundary, $[\alpha]$ a big and nef class, and $\vp_1,\vp_2$ two strictly $\alpha$-psh functions with analytic singularities and the same singularity type. Then the geodesic connecting $\vp_1$ and $\vp_2$ is in $C^{1,1}_{\text{\textrm{loc}}}((X\setminus \mathrm{Sing}(\vp_1))\times A)$, where again $A\subset\C$ is an annulus.
\end{theorem}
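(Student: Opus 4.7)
My plan is to reduce Theorem \ref{fake} to an application of Theorem \ref{theorem} on the compact Kähler manifold with boundary $M := X \times \bar A$. Following the standard trick recalled in the introduction, I fix a smooth function $\eta$ on $\bar A$ with $i\partial\bar\partial\eta > 0$ and work with the reference form $\tilde\alpha := \pi_X^*\alpha + \pi_A^* i\partial\bar\partial\eta$. Since $\alpha$ is nef and big on $X$, the form $\tilde\alpha$ is nef and big on $M$: nefness is inherited by pulling back Kähler approximants of $\alpha$ and using the positivity of $i\partial\bar\partial\eta$ in the $A$-direction, while bigness follows from $\tilde\alpha^{n+1} = (n+1)\,\pi_X^*\alpha^n \wedge \pi_A^* i\partial\bar\partial\eta$. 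The geodesic $\Phi$ between $\varphi_1$ and $\varphi_2$ satisfies $(\pi_X^*\alpha + i\partial\bar\partial\Phi)^{n+1} = 0$ with boundary values $\varphi_i$ on $X \times \{|z|=r_i\}$, so the shifted function $\tilde\Phi := \Phi - \pi_A^*\eta$ satisfies the analogous equation for $\tilde\alpha$ with boundary data $\tilde\varphi_i := \varphi_i - \eta(r_i)$. By standard pluripotential theory for singular metrics (cf.\ \cite{DDL,DNL}), $\tilde\Phi$ should coincide with the envelope \eqref{Chpt2_5} on $(M,\tilde\alpha)$.

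To verify condition (b) of Theorem \ref{theorem}, I extend $\tilde\varphi_i$ to all of $M$ by the competitive formula
\[
\tilde\varphi(x,z) := \max\bigl(\varphi_1(x) + \ell_1(\log|z|),\, \varphi_2(x) + \ell_2(\log|z|)\bigr),
\]
where $\ell_1,\ell_2$ are affine in $\log|z|$, chosen so that the $i$-th summand dominates on $|z|=r_i$ and matches $\tilde\varphi_i$ there; this choice is possible because $\varphi_1 - \varphi_2$ is bounded, which follows from the same-singularity-type hypothesis. Since $\log|z|$ is pluriharmonic, on the branch where the $i$-th term dominates
\[
\tilde\alpha + i\partial\bar\partial\tilde\varphi \;=\; \pi_X^*(\alpha + i\partial\bar\partial\varphi_i) + \pi_A^* i\partial\bar\partial\eta \;\geq\; c\,\omega_M
\]
for some $c > 0$, off the analytic set $\mathrm{Sing}(\varphi_i) \times \bar A$, using the strict $\alpha$-plurisubharmonicity of $\varphi_i$. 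Because $\mathrm{Sing}(\varphi_1) = \mathrm{Sing}(\varphi_2)$, taking the max preserves this lower bound, yielding (b) with pluripolar singular set $\mathrm{Sing}(\varphi_1) \times \bar A$.

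For condition (a), the local representation $\varphi_i = \tfrac{1}{2}\log \sum_j |f_{i,j}|^2 + (\textrm{smooth})$ coming from analytic singularities yields a decreasing family of smooth $(\alpha + \varepsilon\omega)$-psh approximants $\varphi_i^{(\varepsilon)} \searrow \varphi_i$ via the Demailly-style regularization $\tfrac{1}{2}\log(\sum_j |f_{i,j}|^2 + \varepsilon^2)$ glued by a partition of unity. Inserting these into the max formula above produces a smooth decreasing approximation of $\tilde\varphi$ that verifies (a).

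Once both hypotheses are in place, Theorem \ref{theorem} directly gives $\tilde\Phi \in C^{1,1}_{\mathrm{loc}}\bigl((X\setminus\mathrm{Sing}(\varphi_1))\times A\bigr)$, and adding back the smooth function $\pi_A^*\eta$ yields the same conclusion for $\Phi$. The main obstacle I anticipate is precisely the identification of $\tilde\Phi$ with the envelope \eqref{Chpt2_5} when the boundary data is genuinely unbounded below along $\mathrm{Sing}(\varphi_1) \times \partial A$: the classical comparison-principle argument behind \eqref{Chpt2_2} does not apply directly, and one must instead truncate $\tilde\varphi$ at level $-N$, solve the resulting bounded Dirichlet problem, and pass to the limit $N \to \infty$, using the Bedford--Taylor operator on the complement of the pluripolar set $\mathrm{Sing}(\varphi_1)\times\bar A$, where $\tilde\Phi$ remains locally bounded.
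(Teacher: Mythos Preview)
Your overall plan---pull back to $M = X\times\bar A$, build a subsolution by a max-type construction from the two endpoints, verify conditions (a) and (b) of Theorem \ref{theorem}, and invoke that theorem---matches the paper's route through Corollary \ref{cor2}. The difficulty is entirely in producing the smooth approximants $\vp_\e$ required by condition (a), and here your proposal has a genuine gap.

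Your suggestion is to regularize each $\vp_i$ by replacing the local model $c\log\sum_j|f_{i,j}|^2$ with $c\log\bigl(\sum_j|f_{i,j}|^2 + \e^2\bigr)$ and then glue the local pieces by a partition of unity. This fails on two counts. First, partitions of unity do not preserve plurisubharmonicity: if $\chi_k$ are cutoffs and each $u_k$ is $(\alpha+\e\omega)$-psh on its chart, the sum $\sum_k\chi_k u_k$ is in general not, since $i\p\pbar(\chi_k u_k)$ picks up uncontrolled cross terms $i\p\chi_k\wedge\pbar u_k$ and curvature terms $u_k\, i\p\pbar\chi_k$. Second, even before gluing, it is not clear that the local regularization lies in $\PSH(\alpha+\e\omega)$ for small $\e$: writing $w = \log\sum_j|f_{i,j}|^2$ and $\chi(t)=\log(e^t+\e^2)$, one has $i\p\pbar\chi(w) = \chi'(w)\,i\p\pbar w + \chi''(w)\,i\p w\wedge\pbar w$, and since $\chi'(w)<1$ the regularization can \emph{decrease} the complex Hessian in directions where $i\p\pbar w$ is large (e.g.\ the transverse Fubini--Study directions when the ideal $(f_{i,j})$ has codimension $\geq 2$). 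So the inequality $\alpha+\e\omega+i\p\pbar\vp_i^{(\e)}\geq 0$ need not hold. A separate minor point: your max should be a regularized $\widetilde{\max}$ to get smoothness; and you must still verify (b) for the approximants $\vp_\e$, not only for $\vp$.

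The paper sidesteps all of this with a different construction of $\vp_\e$. It exploits the \emph{nef} assumption to produce global smooth potentials $v_\e\in\PSH(X,\alpha+\tfrac{\e}{2}\omega)$ as solutions of an auxiliary Monge--Amp\`ere equation \eqref{this}, proves uniform bounds \eqref{estimates of v ve} for them using the very estimates established earlier in the paper, and then sets
\[
\vp_\e := \widetilde{\max}\{\pi^*\vp_1 - Ct,\ \pi^*\vp_2 - C(1-t),\ \pi^* v_\e - C_\e\} + p^*f,
\]
with $C_\e\to\infty$. Because $v_\e$ is smooth everywhere and the $\widetilde{\max}$ of $(\alpha+\e\omega)$-psh functions is $(\alpha+\e\omega)$-psh, conditions (a) and (b) follow by a direct case analysis: near $\mathrm{Sing}(\vp_1)$ the $v_\e$-branch dominates and supplies smoothness plus the crude positivity $\tfrac{\e}{2}\omega\geq e^{\pi^*\psi}\omega$ (since $\psi\to-\infty$ there), while away from the singular set the $\vp_i$-branches dominate and the strict positivity of $\vp_i$ gives (b). This bootstrap---using the nef approximants rather than any ad hoc smoothing of the singular potentials---is the key idea your proposal is missing.
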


Theorem \ref{cor} was raised as an explicit question at the AIM workshops ``The complex Monge-Amp\`{e}re equation'' \cite[Question 7]{AIM2} and ``Nonlinear PDEs in real and complex geometry" \cite[Problem 2.8]{AIM}, and Theorem \ref{fake} confirms an expectation raised in \cite[pg. 396]{DDL}.  \\

\begin{remark}
Finally, we note that some of the estimates in this paper can be combined with the technique of \cite{CTW1} to improve upon the main result of \cite{He} -- the proof is straight forward but tedious, so we leave the details to the interested reader. The merit of He's technique is that it does not require any positivity of the boundary data beyond that they be quasi-psh. However, it applies only in the setting of geodesics and the overall conclusion is weaker, establishing $C^{1,1}$ regularity in the spatial directions only, i.e. not in the annular directions.
\end{remark}

The rest of the paper is organized as follows. In {Section} \ref{background}, we recall some background information and set some terminology we will use throughout the paper. Our main technical result, Theorem \ref{theorem}, is proved in Section \ref{bignef}. Our primary contribution to the existing literature is a new boundary estimate for the complex Monge-Amp\`ere operator, which is given in Proposition \ref{boundary second order estimate}. We prove Theorem \ref{cor} and Corollary \ref{cor2} in Section \ref{geometry} (Theorem \ref{fake} follows immediately from Corollary \ref{cor2}). We then briefly discuss the case of geodesic rays -- we mainly observe that the results in \cite{Mc} still apply in this generality. Finally, we include an appendix containing some estimates for the Dirichlet problem for the $\omega$-Laplacian when the boundary data is degenerating, which will be needed in the proof of Theorem \ref{theorem}. \\

\noindent
{\bf Acknowledgments. } We would like to thank Valentino Tosatti for useful discussions, and for helping improve the clarity of this paper. We would also like to thank the referee for carefully reading our manuscript, and for helpful comments and suggestions.

\section{Background and Notation}\label{background}

We begin by recalling some background material and defining some terminology we will use throughout the paper. We first set some notation. If $M$ is a compact K\"ahler manifold (possibly with boundary), and $\alpha$ is a closed, real $(1,1)$-form on $M$, then we write $\PSH(M,\alpha)$ to denote the space of all $\alpha$-psh functions, i.e. all those functions:
\[
v: M^\circ\rightarrow \R\cup\{-\infty\},
\]
such that $v$ is upper semi-continuous and $\alpha + i\p\pbar v\geq 0$ in the sense of currents on $M^\circ$. We will sometimes write $\alpha_v := \alpha + i\p\pbar v$ as shorthand. We use the notation $\mathrm{Sing}(v) := \{v = -\infty\}$ for the pole set of $v$. Also, if $f$ is a function on $M^\circ$ (or on $M$), we will write:
\[
f^*(x_0) := \limsup_{\substack{x\rightarrow x_0\\ x\not= x_0}} f(x)\ \ \forall\, x_0\in M.
\]
for the upper semicontinuous regularization of $f$. Note that $f^*$ will be a function on $M$, even if $f$ was only originally defined on $M^\circ$.

We will also deal with many functions which are exponentially smooth. We say that $\psi$ is exponentially smooth if, for any integer $k > 0$, there exists a large $C_k > 0$ such that $e^{C_k\psi}$ is $C^k$-differentiable up to the boundary of $M$. The most important example of exponentially smooth functions are those functions with analytic singularities -- recall that a function $f$ has analytic singularities if there exists $c>0$ such that, near any point, $f$ can be locally expressed as a sum:
\[
f = c\log \sum_{j=1}^N|s_j|^2 + h
\]
for some (finite) collection of holomorphic functions $s_j$ and a smooth function $h$. We have $\mathrm{Sing}(f) = \{\sum_{j=1}^N |s_j|^2 = 0\}$, so that $f$ will in general be unbounded.

Consider now a complex K\"ahler variety $(X_0^n,\omega_0)$ -- for us, this will mean that $X_0$ is a reduced, irreducible compact complex analytic space, without boundary, of complex dimension $n$, equipped with a K\"ahler metric $\omega_0$ in the sense of Moishezon \cite{Mo}. Note that we do not ask that $X_0$ be normal. Using Hironaka's theorem on resolution of singularities \cite{Hir}, there exists a proper modification:
\[
\mu: X\rightarrow X_0,
\]
such that $X$ is compact K\"ahler. It is well-known that $\mu^*\omega_0$ will be a smooth, semi-positive form on $X$, but it will never be a K\"ahler form (unless $\mu$ is trivial, of course). Given two K\"ahler metrics $\omega_0, \omega_1$ on $X_0$, we will say they are cohomologous if:
\[
[\mu^*\omega_0] = [\mu^*\omega_1]
\]
as $(1,1)$-classes on $X$ (this is just so that we only have to use Dolbeault cohomology on the smooth space $X$, instead of working on $X_0$). In this case, let $\vp_1\in\PSH(X,\mu^*\omega_0)\cap C^{\infty}(X)$ be such that:
\[
\mu^*\omega_1 = \mu^*\omega_0 + i\p\pbar\vp_1.
\]
We then define the geodesic between $\omega_0$ and $\omega_1$ to be the envelope:
\[
V:=\sup\{v\in\PSH(X\times A, \pi^*\mu^*\omega_0)\ |\  v^*|_{\{|\tau| = 1\}}\leq 0,\ v^*|_{\{|\tau| = e^{-1}\}}\leq \pi^*\vp_1\},
\]
where $A\subset \C$ is the annulus $A:= \{\tau\in\C\ |\ e^{-1}\leq |\tau|\leq 1\}$, and $\pi:X\times A\rightarrow X$ is the projection map. It is easy to check (cf. \cite{GZ_book}) that $V = V^*$, so that $V$ agrees with definition \eqref{Chpt2_5}. As $\pi^*\mu^*\omega_0$ is a semi-positive form, it follows that $V$ is bounded, and hence $V$ solves the Dirichlet problem \eqref{Chpt2_4}, using the Monge-Amp\`ere operator of Bedford-Taylor \cite{BT} (the boundary condition is guaranteed by the existence of an appropriate subsolution (cf. Section \ref{geometry})).

Let us now suppose that $(X^n,\omega)$ is a compact K\"ahler manifold, without boundary, and that $\alpha$ is a real, closed $(1,1)$-form such that $[\alpha]$ is nef and big -- recall that $[\alpha]$ is nef if $[\alpha + \e\omega]$ is a K\"ahler class for all $\e > 0$, and that $[\alpha]$ is big if there exists a K\"ahler current $\psi\in\PSH(X,\alpha)$ (i.e. $\alpha + \ddbar\psi\geq \delta\omega$, for some $\delta > 0$). By Demailly's regularization theorem, if $[\alpha]$ is big, then there exist many strictly psh functions in $\PSH(X,\alpha)$ with analytic singularities. Given two such functions, $\psi_0, \psi_1$, with the same singularity type (so that $\psi_1 - C\leq \psi_0\leq \psi_1 + C$ for some $C\in\R$), we define the geodesic between them to be the envelope:
\[
V:=\sup\{v\in\PSH(X\times A, \pi^*\alpha)\ |\  v^*|_{\{|\tau| = 1\}}\leq \pi^*\psi_0,\ v^*|_{\{|\tau| = e^{-1}\}}\leq \pi^*\psi_1\}.
\]
$V$ will  be unbounded in general, but it is again easy to check that it satisfies $(\pi^*\alpha + i\p\pbar V)^n = 0$ on $(X\setminus \mathrm{Sing}(\psi_0))\times A$ (see \cite{DDL} for example).

Finally, let $(M^n,\omega)$ be a compact K\"ahler manifold with non-empty, weakly pseudoconcave boundary. Recall that $M$ has weakly pseudoconcave boundary if the Levi form $L_{\de M,\nu}$ of $\de M$ (with respect to the outward pointing normal vector field $\nu$ on $\de M$) satisfies $L_{\de M,\nu}\leq0$. We will employ the following terminology, mirroring that used in the case when $M$ does not have boundary. We will say that a closed, real $(1,1)$-form $\alpha$ is nef if for every $\e > 0$ there exists a smooth function $u_\e\in C^{\infty}(\overline{M})$ such that $u_\e\leq 0$ and $\alpha + \e\omega + i\p\pbar u_\e > 0$. Similarly, we will say that $\alpha$ is big if there exists a function $\psi\in\PSH(M,\alpha)$ such that $\psi\leq 0$ is exponentially smooth and $\alpha + i\p\pbar\psi \geq \delta\omega$, for some $\delta > 0$. We will say that $\alpha$ is $\psi$-big if we wish to emphasize the particular choice of K\"ahler current $\psi$.

\section{$C^{1,1}$-Estimates for Big and Nef Classes}\label{bignef}

We will now present and prove our main result:

\begin{theorem}\label{theorem}
Suppose that $(M^n,\omega)$ is a compact K\"ahler manifold with weakly pseudoconcave boundary. Let $\alpha$ be a smooth, real $(1,1)$-form on $M$ that is $\psi$-big and nef, and suppose there is a function
 \[
   \vp\in\PSH(M,\alpha),\ \  \psi \leq \vp \leq 0,
 \]
 such that:
 \begin{enumerate}[(a)]
 \item There exists a sequence of smooth functions $\vp_\e\in\PSH(\alpha + \e\omega)\cap C^{\infty}(\overline{M})$, decreasing to $\vp$, such that we have the bounds:
 \[
   |\nabla \vp_\e|_\omega + |\nabla^2 \vp_\e|_\omega \leq C e^{-B_0\psi},
 \]
 for each $\e > 0$, with $B_0, C$ fixed positive constants.
 \item We have the key positivity condition:
 \begin{equation}\label{key}
  \alpha + \e\omega + \ddbar\vp_\e \geq ce^{B_0\psi}\omega,
 \end{equation}
 for each $\e > 0$, with $B_0, c$ fixed positive constants.
 \end{enumerate}
 Then the envelope:
 \[
   V := \sup\{ v\in\PSH(M,\alpha)\ |\ v^{*}|_{\partial M} \leq \vp|_{\partial M}\}
 \]
 is in $C^{1,1}_{\textrm{loc}}(M\setminus\mathrm{Sing}(\psi))$.
\end{theorem}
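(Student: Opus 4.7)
The plan is to combine Berman's zero-temperature approximation \cite{Ber} with the Hessian-bound technique of Chu-Tosatti-Weinkove \cite{CTW, CTW1}, carefully adapted to the present situation where the reference form is nef but not K\"ahler and the boundary data is only exponentially smooth. Concretely, for parameters $\varepsilon,\beta>0$ I would approximate $V$ by smooth solutions $u_{\varepsilon,\beta}$ of the Dirichlet problem
\[
(\alpha+\varepsilon\omega+\sqrt{-1}\partial\overline{\partial}u_{\varepsilon,\beta})^n = e^{\beta(u_{\varepsilon,\beta}-\varphi_\varepsilon)}\omega^n,\qquad u_{\varepsilon,\beta}\big|_{\partial M}=\varphi_\varepsilon\big|_{\partial M}.
\]
Existence of smooth solutions for each fixed $\varepsilon,\beta$ should follow from the continuity method: $\alpha+\varepsilon\omega$ is K\"ahler up to an exact form by nefness, hypothesis (b) makes $\varphi_\varepsilon$ a quantitative subsolution, and the pseudoconcavity of $\partial M$ together with $\varphi_\varepsilon|_{\partial M}$ provides the remaining a priori $C^0$-control. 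The comparison principle then yields $u_{\varepsilon,\beta}\geq\varphi_\varepsilon$ and an upper bound $u_{\varepsilon,\beta}\leq \varphi_\varepsilon+C(\log\beta)/\beta$ on compacta of $M\setminus \mathrm{Sing}(\psi)$, so $u_{\varepsilon,\beta}\to V_\varepsilon$ as $\beta\to\infty$ and then $V_\varepsilon\to V$ as $\varepsilon\to 0$ using hypothesis (a) and $\varphi_\varepsilon\searrow\varphi$.

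The interior $C^{2}$ bound on compact subsets of $M\setminus\mathrm{Sing}(\psi)$ is then obtained by adapting the maximum-principle argument from \cite{CTW}, applied to an auxiliary function of the form
\[
Q=\log\lambda_{\max}\bigl(\nabla^{2}u_{\varepsilon,\beta}\bigr) + A\bigl(u_{\varepsilon,\beta}-\psi\bigr) - B\,e^{-A'\psi},
\]
with constants $A,A',B$ tuned so that: (i) the exponential right-hand side $e^{\beta(u_{\varepsilon,\beta}-\varphi_\varepsilon)}$ contributes a favorable sign using $u_{\varepsilon,\beta}\geq\varphi_\varepsilon$; (ii) the gradient and Hessian bounds on $\varphi_\varepsilon$ from (a) enter only through the weight $e^{-B_0\psi}$; and (iii) the quantitative positivity $\alpha+\varepsilon\omega+\sqrt{-1}\partial\overline{\partial}\varphi_\varepsilon\geq c\,e^{B_0\psi}\omega$ from (b) supplies the ellipticity needed to dominate the error terms away from $\mathrm{Sing}(\psi)$. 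A gradient bound of B\l ocki type then completes the interior picture.

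The main obstacle, and the genuine novelty required (Proposition \ref{boundary second order estimate}), is the boundary second-order estimate. The standard Guan-type approach to tangential, mixed tangential-normal and double-normal second derivatives at $\partial M$ relies crucially on strict positivity of the reference form near the boundary, which fails here: $\alpha$ is only nef, so the $\varepsilon\omega$ term must not appear in the final estimate. To circumvent this, I would construct perturbed barriers of the form
\[
h=\varphi_\varepsilon+\eta\,\rho-N\,e^{-A''\psi}\rho^{2},
\]
where $\rho$ is a defining function for $\partial M$ and $\eta,N,A''$ are chosen to depend on $\psi$. Weak pseudoconcavity of $\partial M$ produces the correct sign for $\sqrt{-1}\partial\overline{\partial}\rho$ on $T^{1,0}\partial M$, and the positivity from (b), though only of order $e^{B_0\psi}$, can be leveraged so that all error terms carry matching exponential weights and remain uniform in $\varepsilon,\beta$. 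The double-normal derivative is then controlled directly from the Monge-Amp\`ere equation at $\partial M$, again tracking the $e^{B_0\psi}$-degeneration. Once uniform $C^{2}$ estimates are in hand on compact subsets of $M\setminus\mathrm{Sing}(\psi)$, the Arzel\`a-Ascoli theorem together with the weak-$*$ persistence of the Hessian bound yields $V\in C^{1,1}_{\mathrm{loc}}(M\setminus\mathrm{Sing}(\psi))$ in the limit.
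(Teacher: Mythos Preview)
Your overall architecture (Berman path $+$ CTW Hessian estimates, with a new boundary estimate that avoids strict positivity of the reference form) matches the paper's. However, there is a genuine gap in your choice of obstacle for the Berman path.

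You propose to solve
\[
(\alpha+\varepsilon\omega+\sqrt{-1}\,\partial\overline\partial u_{\varepsilon,\beta})^n
= e^{\beta(u_{\varepsilon,\beta}-\varphi_\varepsilon)}\omega^n,
\qquad u_{\varepsilon,\beta}\big|_{\partial M}=\varphi_\varepsilon\big|_{\partial M},
\]
and you even record the upper bound $u_{\varepsilon,\beta}\le \varphi_\varepsilon + C(\log\beta)/\beta$. But this bound already tells you that the limit as $\beta\to\infty$ is the envelope with \emph{global} obstacle $\varphi_\varepsilon$, namely $P_{\alpha+\varepsilon\omega}(\varphi_\varepsilon)=\sup\{v:v\le\varphi_\varepsilon\text{ on }M\}$, \emph{not} the Dirichlet envelope $V_\varepsilon=\sup\{v:v^*|_{\partial M}\le\varphi_\varepsilon|_{\partial M}\}$ that you want. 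These differ: by hypothesis~(b), $\varphi_\varepsilon$ is strictly $(\alpha+\varepsilon\omega)$-psh away from $\mathrm{Sing}(\psi)$, so it is not maximal and $V_\varepsilon>\varphi_\varepsilon\ge P_{\alpha+\varepsilon\omega}(\varphi_\varepsilon)$ in the interior. Your scheme would prove $C^{1,1}$ regularity of the wrong envelope. (A related point: with your right-hand side, $\varphi_\varepsilon$ is not a subsolution either, since $(\alpha+\varepsilon\omega+\sqrt{-1}\,\partial\overline\partial\varphi_\varepsilon)^n\ge (c\,e^{B_0\psi})^n\omega^n$ need not dominate $\omega^n$.)

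The paper repairs this by introducing an \emph{auxiliary} obstacle $h_\varepsilon$ solving a linear Dirichlet problem $\Delta_{2\omega}h_\varepsilon=-n$, $h_\varepsilon|_{\partial M}=\varphi_\varepsilon|_{\partial M}$. Since $\alpha+\varepsilon\omega\le 2\omega$, every competitor $v$ for $V_\varepsilon$ satisfies $v\le h_\varepsilon$ on all of $M$ by the maximum principle, so $V_\varepsilon=\sup\{v:v\le h_\varepsilon\}$ and the Berman path with obstacle $h_\varepsilon$ (and an extra $n\log(\varepsilon/4)$ in the exponent, making $\varphi_\varepsilon$ a genuine subsolution) does converge to $V_\varepsilon$. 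The price is that one must control $h_\varepsilon$ uniformly in $\varepsilon$; this is done in the appendix and yields $|\nabla^k h_\varepsilon|\le C_k e^{-B_k\psi}$. For the boundary Hessian, the paper's barriers are built not from $\rho,\rho^2$ as you suggest, but from the good term $\mu_1(u-\varphi_\varepsilon)$ (whose $\ti g$-Laplacian produces $-e^F\,\mathrm{tr}_{\ti\omega}\omega$ via hypothesis~(b)) combined with terms of the form $e^{BF}\bigl(\mu_2|z|^2 - e^{B\psi}(D_\gamma U)^2\bigr)$; the exponential weights in $F$ and $\psi$ are what make the argument close without any $\varepsilon$-dependence.
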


Theorem \ref{theorem} will be proved using the Berman path \cite{Ber} and the Hessian estimates of Chu-Tosatti-Weinkove \cite{CTW0, CTW1, CTW}. As mentioned, our main contribution is a uniform boundary Hessian estimate along the Berman path. Note that if $\vp$ in Theorem \ref{theorem} is actually smooth near the boundary, one can simply take $\vp_\e = \widetilde{\max}\{\vp, v_\e - C_\e\}$ for all $\e > 0$, where the $v_\e$ come from the nef condition, and the $C_\e$ are large constants such that $\vp_\e = \vp$ near the boundary -- this is because we won't actually need the estimates in part (a) to hold everywhere, only near the boundary, as the proof will show. In this manner we recover, and actually improve upon, \cite[Theorem 1.3]{Mc}.

The proof of Theorem \ref{theorem} is rather long, so we sketch the main steps here. We begin by simultaneously perturbing the form $\alpha$ (by adding a small multiple of $\omega$) and approximating the boundary data (using the family $\vp_\e$). Then, we run the Berman path on these perturbed data, and produce estimates uniform under both the path and the perturbations. We begin by proving uniform $L^\infty$ bounds, and then establish uniform control of the gradient. This is done by following (and simultaneously improving) similar gradient estimates in \cite{CTW}, which are based on estimates in \cite{Bl2} and \cite{PS2}. Next, we establish control of the Hessian at the boundary, adapting previous estimates from \cite{CKNS, Guan, Chen, Bou}. Finally, we bound the Hessian on the interior of $M$, using the method of \cite{CTW0}.

To begin, we first point out that one can relax the assumptions in \eqref{key} slightly -- throughout, we'll work with condition \eqref{prop of F} and this other function $F$, as it makes the proof slightly easier to follow:

\begin{proposition}\label{forms}
Suppose that $F$ is an exponentially smooth, quasi-psh function such that:
\begin{equation}\label{prop of F}
\alpha + \e\omega + \ddbar\vp_\e \geq e^{F}\omega\text{ for all }\e > 0.
\end{equation}
Then there exists an exponentially smooth, strictly $\alpha$-psh function $\ti{\psi}$ such that \eqref{key} holds for all $\e > 0$ and
\[
\alpha+\ddbar\ti\psi \geq \frac{\delta}{2}\omega.
\]
Further, $\ti{\psi}$ will be singular only along $\mathrm{Sing}(\psi)\cup \mathrm{Sing}(F)$.
\end{proposition}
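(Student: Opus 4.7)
The plan is to construct $\ti{\psi}$ as a positive linear combination of the K\"ahler current $\psi$ coming from the $\psi$-big assumption and the function $F$. The key observation is that since $F$ is only quasi-psh, adding a multiple of $F$ to $\psi$ will cost at most a scalar multiple of $\omega$ in positivity, which the strict positivity $\alpha + \ddbar\psi \geq \delta\omega$ has room to absorb; at the same time, rescaling $F$ by the reciprocal of a suitable exponent lets us match the weight $B_0\ti{\psi}$ on the left of \eqref{key} to the exponent $F$ on the right of \eqref{prop of F}.

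Concretely, I would first pick $A>0$ such that $\ddbar F \geq -A\omega$ (which exists since $F$ is quasi-psh), set
\[
  B_0 := \tfrac{2A}{\delta}, \qquad \ti{\psi} := \psi + \tfrac{1}{B_0} F,
\]
and verify the three claimed properties in turn. For strict $\alpha$-plurisubharmonicity,
\[
  \alpha + \ddbar\ti{\psi} \;=\; (\alpha + \ddbar\psi) + \tfrac{1}{B_0}\ddbar F \;\geq\; \delta\omega - \tfrac{A}{B_0}\omega \;=\; \tfrac{\delta}{2}\omega,
\]
giving the stated lower bound. For the key estimate, since $\psi\leq 0$ we have $e^{B_0\ti{\psi}} = e^{B_0\psi}\cdot e^{F} \leq e^{F}$, so taking $c := 1$ and using the hypothesis \eqref{prop of F},
\[
  c\,e^{B_0\ti{\psi}}\omega \;\leq\; e^{F}\omega \;\leq\; \alpha + \e\omega + \ddbar\vp_\e
\]
for every $\e>0$, which is exactly \eqref{key}.

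For exponential smoothness, given any integer $k > 0$, I would pick $C_k$ large enough so that both $e^{C_k\psi}$ and $e^{(C_k/B_0)F}$ are $C^k$-differentiable up to $\partial M$ (which is possible because $\psi$ and $F$ are individually exponentially smooth); then $e^{C_k\ti{\psi}} = e^{C_k\psi}\cdot e^{(C_k/B_0)F}$ is $C^k$ up to $\partial M$ as a product of such functions. Finally, $\ti{\psi}(x) = -\infty$ if and only if $\psi(x) = -\infty$ or $F(x) = -\infty$, so $\mathrm{Sing}(\ti{\psi}) = \mathrm{Sing}(\psi)\cup\mathrm{Sing}(F)$.

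There is no real obstacle here, as the construction is essentially forced: the coefficient $1/B_0$ in front of $F$ must equal the reciprocal of the exponent $B_0$ appearing in \eqref{key} in order for $B_0\ti{\psi}$ to reproduce $F$ on the nose, and then the threshold $B_0 \geq 2A/\delta$ is forced in turn by the requirement that the positivity loss $(A/B_0)\omega$ from $\ddbar(F/B_0)$ consume no more than half of the strict positivity $\delta\omega$ of $\alpha + \ddbar\psi$. The only thing to keep track of is that the hypothesis $\psi \leq 0$ is used in an essential way to discard the factor $e^{B_0\psi} \leq 1$ in the key estimate.
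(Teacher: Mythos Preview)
Your proof is correct and follows essentially the same approach as the paper: both construct $\ti{\psi} = \psi + \frac{\delta}{2A}F$ (the paper writes $C$ for your $A$) and take $B_0 = 2A/\delta$, $c = 1$. Your write-up is in fact more complete, as you spell out the verification of \eqref{key}, the lower bound $\alpha + \ddbar\ti\psi \geq \frac{\delta}{2}\omega$, the exponential smoothness, and the singular locus, all of which the paper leaves implicit.
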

\begin{proof}
Let $\psi$ be as in the big condition. We may assume without loss of generality that $F\leq 0$. By assumption,
\[
\ddbar F \geq -C\omega,
\]
so if we define:
\[
\ti{\psi} := \psi + \frac{\delta}{2C} F,
\]
we have that \eqref{key} holds with $c=1$ and $B_0 = 2C/\delta$.
\end{proof}

Note that, if we happen to have $\vp_\e = \vp$ for all $\e > 0$ (so that $\vp$ is actually smooth and $\alpha$ is ``semipositive''), then we can scale $\omega$ such that $\omega \geq \alpha_\vp$. Then the function
\[
F := \log\left((\alpha_\vp)^n/\omega^n\right)
\]
 is exponentially smooth and satisfies \eqref{prop of F} -- to see this, look at the eigenvalues $\lambda _j$ of $\alpha_\vp$ in normal coordinates for $\omega$ at a point:
\[
 \lambda_j = \left(\prod_{k=1}^n \lambda_k\right) / \left(\prod_{\substack{k=1\\ k\not= j}}^n\lambda_k\right) \geq e^F.
\]
Unfortunately, such an $F$ will not always be quasi-psh -- however, we will show in Section \ref{geometry} that in the case of a geodesic between two K\"ahler metrics on a singular K\"ahler variety, we can always find an exponentially smooth $F'\leq F$ that will be quasi-psh, and hence we will be able to apply our results in that setting.

Before moving on, observe that being exponentially smooth gives control over all derivatives of $F$ and $\psi$:
\begin{equation}\label{derivatives}
  \begin{split}
  |\nabla\psi|_{g}+|\nabla^{2}\psi|_{g} \leq C e^{-B_0\psi}\\
  |\nabla F|_{g}+|\nabla^{2}F|_{g} \leq C e^{-B_0F},
 \end{split}
\end{equation}
where here $g$ is the Riemannian metric corresponding to $\omega$. Also, by replacing $\e$ with $\e/2$ and relabeling the $\vp_\e$, we can improve condition \eqref{prop of F} to
\begin{equation}\label{key'}
 \alpha + \e\omega + \ddbar\vp_\e \geq (e^F + \e/2)\omega,
\end{equation}
without changing the problem. Finally, we will also assume without loss of generality that:
\begin{equation}\label{WLOG}
 \alpha \leq \omega.
\end{equation}

\bigskip

\begin{proof}[Proof of Theorem \ref{theorem}]

Our strategy will be the same as in \cite{Mc}, which is a combination of the techniques in \cite{Ber} and the estimates in \cite{CTW}. We begin by approximating $V$ by the following envelopes:
\[
 V_\e := \sup\{ v\in\PSH(M,\alpha + \e\omega)\ |\ v^{*}|_{\partial M} \leq \vp_\e|_{\partial M}\}.
\]
Note that the $V_\e$ decrease pointwise to $V$ as $\e$ decreases to $0$.

We now define the obstacle functions $h_\e \in C^{\infty}(M)$ as the solutions to the following Dirichlet problems:
\[
 \begin{cases}
  \Delta_{2\omega} h_\e = -n, \\
  h_\e|_{\partial M} = \vp_\e|_{\partial M}.
 \end{cases}
\]
By Lemma \ref{applem} and \ref{applem 1}, we have control over the derivatives of the $h_\e$:
\begin{equation}\label{estimates for h ve}
 |\nabla h_\e|_g + |\nabla^2 h_\e|_g + |\nabla^3 h_\e|_g \leq C e^{-B_0 \psi},
\end{equation}
and we know that they decrease as $\e\rightarrow 0$. In particular, for all $\e \leq 1$, $h_\e \leq h_1 \leq C$, independent of $\e$.

Observe that for any $v\in\PSH(M,\alpha + \e\omega)$ with $v^{*}|_{\partial M} \leq \vp_\e|_{\partial M}$, we have that:
\begin{equation}\label{upper bound 1}
 v \leq h_\e
\end{equation}
by (\ref{WLOG}) and the weak maximum principle for the $\omega$-Laplacian. We thus see that:
\[
 V_\e = \sup\{ v\in\PSH(M,\alpha + \e\omega)\ |\ v\leq h_\e\},
\]
where the inequality now holds on all of $M$.

We now approximate the $V_\e$ by the smooth solutions to the non-degenerate Dirichlet problem:
\begin{equation}\label{ma}
\begin{cases}
 (\alpha + \ve\omega + \ddbar u_{\e,\beta})^n = e^{\beta (u_{\e,\beta} - h_{\ve}) + n\log(\ve/4)}\omega^n, \\
 \alpha + \ve\omega + \ddbar u_{\e,\beta} > 0, \ \ u_{\e,\beta}|_{\partial M} = \vp_\e|_{\partial M}.
\end{cases}
\end{equation}
By \cite[Proposition 2.3]{Ber}, \cite[Proposition 4.5]{Mc}, the $u_{\e,\beta}$ converge uniformly to $V_\e$ as $\beta\rightarrow\infty$ (the convergence is not uniform in $\e$, but this will not be a problem as we will send $\beta$ to infinity before sending $\e$ to zero). Note that as $u_{\e,\beta}\in\PSH(M,\alpha+\e\omega)$ with $u_{\ve,\beta}|_{\partial M} = \vp_\e|_{\partial M}$, by (\ref{upper bound 1}), we have:
\begin{equation}\label{upper bound}
 u_{\e,\beta} - h_\e \leq 0.
\end{equation}
Also note that:
\[
 (\alpha + \ve\omega + \ddbar \vp_\e)^n \geq (\e/2)^n\omega^n \geq e^{\beta (\vp_\e - h_\e) + n\log(\e/4)}\omega^n
\]
where we used \eqref{key'} and $\vp_\e\leq h_\e$. This makes $\vp_\e$ a subsolution of \eqref{ma}, so that $u_{\e,\beta}$ actually exists (cf. \cite[Theorem 1.1]{CKNS}, \cite[Theorem 1.1]{Guan}, \cite[Theorem 1.1]{GL}), and we have
\begin{equation}\label{lower bound}
\vp_\e \leq u_{\e,\beta}.
\end{equation}

Our goal is to establish uniform $C^2$-estimates for $u_{\ve,\beta}$, independent of $\ve$ and $\beta$ (we drop the subscripts now for ease of notation). We have just shown the requisite $C^0$-bound:
\begin{equation}\label{zero order estimate}
 \psi \leq \vp \leq \vp_\e\leq u \leq h_\e \leq h_1
\end{equation}
and since $\vp_\e|_{\partial M} = u|_{\partial M} = h_\e|_{\partial M}$, it follows that the gradient is bounded on $\partial M$:
\begin{equation}\label{gradbound}
 |\nabla u|_g \leq |\nabla \vp_\e|_g + |\nabla h_\e|_g \leq Ce^{-B_0\psi}\text{ on }\partial M.
\end{equation}

We now bound the gradient on the interior by following the argument in \cite[Lemma 4.1, (iii)]{CTW}:
\begin{lemma}\label{first order estimate}
There exist uniform constants $\beta_0, B$, and $C$ such that
\[
  |\nabla u|_{g} \leq Ce^{-B\psi}\text{ for all }\beta\geq \beta_0.
\]
\end{lemma}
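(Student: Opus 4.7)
The boundary estimate \eqref{gradbound} reduces the claim to bounding $|\nabla u|_{g}$ at an interior maximum of a suitable auxiliary quantity. I would set
\[
 Q \;:=\; \log |\nabla u|_{g}^{2} \;+\; 2B\psi \;+\; A(\ti{\psi}-u),
\]
with $A,B>0$ to be chosen and $\ti{\psi}$ the exponentially smooth, strictly $\alpha$-psh function from Proposition \ref{forms}, so that $\alpha+\ddbar\ti\psi\ge(\delta/2)\omega$. Since $u\in C^{\infty}(\overline M)$ while $\psi,\ti\psi\to-\infty$ along $\mathrm{Sing}(\psi)$, the function $Q$ tends to $-\infty$ on $\mathrm{Sing}(\psi)$, and its maximum is attained either on $\partial M$ or at some $x_{0}\in M\setminus\mathrm{Sing}(\psi)$. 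On the boundary, \eqref{gradbound} together with $\ti\psi\le 0$, $-u\le C$ and the choice $B\ge B_{0}$ gives $Q\le 2\log C+2(B-B_{0})\psi+C\le C'$, which is the desired bound on $|\nabla u|_{g}$.

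Suppose instead that the maximum is at an interior point $x_{0}$. Let $\mathcal{L}:=g'^{i\ov j}\p_{i}\p_{\ov j}$ be the linearized operator of \eqref{ma}, with $g':=\alpha+\e\omega+\ddbar u$. I would compute $\mathcal{L}Q$ via the Bochner-type formula for $\mathcal{L}\log|\nabla u|_{g}^{2}$ as in \cite{Bl2,PS2,CTW}: this produces a non-negative quadratic term in $\nabla^{2}u$ (to be absorbed or discarded), bounded curvature contributions, and a term from differentiating the MA equation \eqref{ma} in the direction of $\nabla u$ of size $O(\beta)\,|\nabla(u-h_{\e})|_{g}\,|\nabla u|_{g}$, which is manageable using $u\le h_{\e}$, $|\nabla h_{\e}|_{g}\le Ce^{-B_{0}\psi}$ from \eqref{estimates for h ve}, and $\beta\ge\beta_{0}$. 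Crucially, the $\tr{g'}{\alpha}$ contributions cancel between $-A\mathcal{L}u$ and $A\mathcal{L}\ti\psi$, yielding
\[
 A\mathcal{L}\ti\psi-A\mathcal{L}u\;\ge\;\tfrac{A\delta}{2}\,\tr{g'}{\omega}-An,
\]
while the term $2B\mathcal{L}\psi$ is controlled via the strict $\alpha$-psh property of $\psi$ (in fact it is bounded below by $2B\delta\tr{g'}{\omega}-2B\tr{g'}{\alpha}$). Choosing $A$ large enough that $\tfrac{A\delta}{4}\tr{g'}{\omega}$ dominates every error term, the inequality $\mathcal{L}Q(x_{0})\le 0$ forces $|\nabla u|_{g}(x_{0})$ to be bounded, whence $Q(x_{0})\le C$ and the lemma follows.

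The main obstacle is the careful tracking of the exponentially smooth factors $e^{-B_{0}\psi}$ produced by \eqref{derivatives} and \eqref{estimates for h ve} each time one differentiates $\psi$, $\ti\psi$, $\vp_{\e}$, or $h_{\e}$, both in the Bochner computation and when using $\nabla Q(x_{0})=0$ to substitute for first derivatives of $|\nabla u|_{g}^{2}$. One has to show that all such factors are absorbed by the ``$2B\psi$'' cushion in $Q$ together with the non-negative Hessian term from $\mathcal{L}\log|\nabla u|_{g}^{2}$, while keeping $B$, $C$ and $\beta_{0}$ uniform in $\e$ and $\beta$. Once this bookkeeping is carried out, $B$ can be taken to be $B_{0}$ plus a fixed constant determined by the geometric data, completing the proof.
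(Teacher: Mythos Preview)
Your outline follows essentially the same maximum–principle strategy as the paper (which uses the auxiliary quantity $e^{H(\tilde u)}|\nabla u|^2$ with $\tilde u=u-(1+\delta)\psi$ and $H(s)=-Bs+\tfrac{1}{s+1}$, rather than your additive $\log|\nabla u|^2+2B\psi+A(\tilde\psi-u)$), and the handling of $A\mathcal{L}(\tilde\psi-u)\ge\tfrac{A\delta}{2}\tr{g'}{\omega}-An$ is correct.

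There is, however, a genuine gap in your treatment of the $\beta$–term. Differentiating \eqref{ma} and contracting with $\nabla u$ gives
\[
2\beta\,\mathrm{Re}\,\langle\nabla(u-h_\e),\ov\nabla u\rangle
=2\beta|\nabla u|_g^2-2\beta\,\mathrm{Re}\,\langle\nabla h_\e,\ov\nabla u\rangle,
\]
and the whole argument hinges on the \emph{sign} of the first summand: the $+2\beta|\nabla u|_g^2$ term is what eventually absorbs the $-2\beta|\nabla h_\e|_g|\nabla u|_g$ contribution (after assuming $|\nabla u|_g\ge 2|\nabla h_\e|_g$ at $x_0$, else the bound is immediate). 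Describing this merely as ``a term of size $O(\beta)\,|\nabla(u-h_\e)|_g|\nabla u|_g$'' and appealing to $u\le h_\e$ is not enough—no other term in $\mathcal{L}Q$ is of order $\beta$, so if you treat the whole expression as an unsigned $O(\beta)$ error it cannot be absorbed by the fixed $\tfrac{A\delta}{2}\tr{g'}{\omega}$ term as $\beta\to\infty$. The pointwise inequality $u\le h_\e$ plays no role in this gradient step; it controls neither $|\nabla(u-h_\e)|_g$ nor the sign.

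In short: the approach is fine and close to the paper's, but you must make explicit that the $\beta$–contribution splits with a favorable $+2\beta|\nabla u|_g^2$ term, and that the remaining $-2\beta\langle\nabla h_\e,\ov\nabla u\rangle$ is controlled either by assuming $|\nabla u|_g\ge 2|\nabla h_\e|_g$ (harmless since otherwise \eqref{estimates for h ve} finishes the proof) or by Cauchy--Schwarz against that positive $\beta$–term. Without this the sketch does not close.
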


\begin{proof}

We begin by defining:
\[
 \ti{\omega} := \alpha + \e\omega + \ddbar u.
\]
Then note that, as $u\geq\psi$ and $\psi\leq 0$, we have:
\[
 \ti{u} := u  - (1+\delta)\psi \geq -\delta\psi \geq 0,
\]
and:
\[
 \ti{\omega} - \ddbar\ti{u}
  =  (1+\delta)(\alpha+\ddbar\psi)-\delta\alpha+\ve\omega
\geq (1+\delta)\delta\omega-\delta\alpha
\geq \delta^2\omega,
\]
by \eqref{WLOG}. In particular, we have
\begin{equation}\label{C1 eqn 1}
\Delta_{\ti{g}}\ti{u}\leq n-\delta^{2}\tr{\ti{\omega}}{\omega}.
\end{equation}

We now seek to bound the quantity:
\[
 Q := e^{H(\ti{u})}|\nabla u|_g^2,
\]
by a constant independent of $\e$ and $\beta$, where here $H(s)$ is defined for $s\geq 0$ as:
\[
 H(s) := -Bs + \frac{1}{s + 1},
\]
for some large constant $B$ to be determined. Let $x_0$ be a maximum point for $Q$ -- it cannot be in $\mathrm{Sing}(\psi)$, as $Q$ is zero there. If it is on the boundary of $M$, by \eqref{gradbound} and $\ti{u}\geq-\delta\psi$, we have
\begin{equation*}
Q(x_{0}) \leq Ce^{-B\ti{u}(x_{0})-B_{0}\psi(x_{0})} \leq Ce^{(B\delta-B_{0})\psi(x_{0})}\leq C,
\end{equation*}
provided we take $B > B_0/\delta$.

Thus, suppose that $x_0$ is an interior point. It suffices to prove that
\begin{equation}\label{goal}
|\nabla u|_{g}^{2}(x_{0}) \leq Ce^{-B_{0}\psi(x_{0})},
\end{equation}
for some uniform constant $C$ -- by the same argument as above, this would imply $Q$ is uniformly bounded.

Next, choose holomorphic normal coordinates such that
\[
g_{i\ov{j}} = \delta_{ij}, \quad
\ti{g}_{i\ov{j}} = \delta_{ij}\ti{g}_{i\ov{i}}, \quad \text{at $x_0$},
\]
where $\ti{g}$ is the Riemannian metric corresponding to $\ti{\omega}$. At $x_0$, we see that
\begin{equation}\label{C1 eqn 2}
\begin{split}
0 \geq  \Delta_{\ti{g}} Q
=  |\nabla u|_{g}^{2}\Delta_{\ti{g}}(e^{H})+e^{H}\Delta_{\ti{g}}(|\nabla u|_{g}^{2})+2\mathrm{Re}\Big(\ti{g}^{i\ov{i}}(e^{H})_{i}(|\nabla u|_{g}^{2})_{\ov{i}}\Big).
\end{split}
\end{equation}
For the first term of \eqref{C1 eqn 2}, using \eqref{C1 eqn 1} and $H'\leq0$, we have
\begin{equation}\label{C1 eqn 3}
\begin{split}
|\nabla u|_{g}^{2}&\Delta_{\ti{g}}(e^{H}) \\
= {} & e^{H}\left((H')^2+H''\right)|\nabla u|_{g}^{2}|\nabla \ti{u}|_{\ti{g}}^{2}+e^{H}H'|\nabla u|_{g}^{2}\Delta_{\ti{g}}\ti{u} \\
\geq {} & e^{H}\left((H')^2+H''\right)|\nabla u|_{g}^{2}|\nabla \ti{u}|_{\ti{g}}^{2}+ne^{H}H'|\nabla u|_{g}^{2}-\delta^{2}e^{H}H'|\nabla u|_{g}^{2}\tr{\ti{\omega}}{\omega}.
\end{split}
\end{equation}
For the second term of \eqref{C1 eqn 2}, applying $\de_{k}$ to \eqref{ma} gives:
\[
\ti{g}^{i\ov{i}}u_{i\ov{i}k} = \beta(u-h_{\ve})_{k}-\ti{g}^{i\ov{i}}\de_{k}\alpha_{i\ov{i}}.
\]
Thus,
\begin{equation}\label{C1 eqn 4}
\begin{split}
e^{H}\Delta_{\ti{g}}(|\nabla u|_{g}^{2})
= {} & e^{H}\sum_{k}\ti{g}^{i\ov{i}}(|u_{ik}|^{2}+|u_{i\ov{k}}|^{2})+2e^{H}\mathrm{Re}\bigg(\sum_{k}\ti{g}^{i\ov{i}}u_{i\ov{i}k}u_{\ov{k}}\bigg) \\
& +e^{H}\ti{g}^{i\ov{i}}(\de_{i}\de_{\ov{i}}g^{k\ov{l}})u_{k}u_{\ov{l}} \\[2mm]
\geq {} & e^{H}\sum_{k}\ti{g}^{i\ov{i}}(|u_{ik}|^{2}+|u_{i\ov{k}}|^{2})+2\beta e^{H}|\nabla u|_{g}^{2} \\
& -2\beta e^{H}\mathrm{Re}\langle\nabla h_\e,\overline{\nabla}u\rangle_g-Ce^{H}|\nabla u|_{g}^{2}\tr{\ti{\omega}}{\omega}.
\end{split}
\end{equation}
For the third term of \eqref{C1 eqn 2}, we compute
\[
2\mathrm{Re}\left(\ti{g}^{i\ov{i}}(e^{H})_{i}(|\nabla u|_{g}^{2})_{\ov{i}}\right)
= 2e^{H}H'\mathrm{Re}\left(\sum_{k}\ti{g}^{i\ov{i}}\ti{u}_{i}u_{k\ov{i}}u_{\ov{k}}\right)
+2e^{H}H'\mathrm{Re}\left(\sum_{k}\ti{g}^{i\ov{i}}\ti{u}_{i}u_{k}u_{\ov{k}\ov{i}}\right).
\]
Recalling $u_{k\ov{i}}=\ti{g}_{k\ov{i}}-\alpha_{k\ov{i}}-\ve g_{k\ov{i}}$, $\ti{u}=u-(1+\delta)\psi$ and $H'\leq0$, and using the Cauchy-Schwarz inequality,
\[
\begin{split}
2e^{H}&H'\mathrm{Re}\bigg(\sum_{k}\ti{g}^{i\ov{i}}\ti{u}_{i}u_{k\ov{i}}u_{\ov{k}}\bigg) \\
= {} & 2e^{H}H'\mathrm{Re}\langle\nabla\ti{u},\overline{\nabla}u\rangle_g
-2e^{H}H'\mathrm{Re}\bigg(\sum_{k}\ti{g}^{i\ov{i}}\ti{u}_{i}(\alpha_{k\ov{i}}+\ve g_{k\ov{i}})u_{\ov{k}}\bigg) \\
\geq {} & 2e^{H}H'\mathrm{Re}\langle\nabla\ti{u},\overline{\nabla}u\rangle_g+\frac{\delta^{2}}{2}e^{H}H'|\nabla u|_{g}^{2}\tr{\ti{\omega}}{\omega}
+\frac{C}{\delta^{2}}e^{H}H'|\nabla \ti{u}|_{\ti{g}}^{2} \\
\geq {} & 2e^{H}H'|\nabla u|_{g}^{2}-2(1+\delta)e^{H}H'\mathrm{Re}\langle\nabla\psi,\overline{\nabla}u\rangle_g+\frac{\delta^{2}}{2}e^{H}H'|\nabla u|_{g}^{2}\tr{\ti{\omega}}{\omega}
+\frac{C}{\delta^{2}}e^{H}H'|\nabla \ti{u}|_{\ti{g}}^{2} \\
\geq {} & 3e^{H}H'|\nabla u|_{g}^{2}+Ce^{H}H'|\nabla\psi|_{g}^{2}+\frac{\delta^{2}}{2}e^{H}H'|\nabla u|_{g}^{2}\tr{\ti{\omega}}{\omega}
+\frac{C}{\delta^{2}}e^{H}H'|\nabla \ti{u}|_{\ti{g}}^{2}
\end{split}
\]
and
\[
2e^{H}H'\mathrm{Re}\left(\sum_{k}\ti{g}^{i\ov{i}}\ti{u}_{i}u_{k}u_{\ov{k}\ov{i}}\right)
\geq -e^{H}\sum_{k}\ti{g}^{i\ov{i}}|u_{ik}|^{2}-e^{H}(H')^{2}|\nabla u|_{g}^{2}|\nabla\ti{u}|_{\ti{g}}^{2}.
\]
It then follows that
\begin{equation}\label{C1 eqn 5}
\begin{split}
2\mathrm{Re}&\left(\ti{g}^{i\ov{i}}(e^{H})_{i}(|\nabla u|_{g}^{2})_{\ov{i}}\right) \\
\geq {} & 3e^{H}H'|\nabla u|_{g}^{2}+Ce^{H}H'e^{-B_{0}\psi}+\frac{\delta^{2}}{2}e^{H}H'|\nabla u|_{g}^{2}\tr{\ti{\omega}}{\omega} \\
& +\frac{C}{\delta^{2}}e^{H}H'|\nabla \ti{u}|_{\ti{g}}^{2}-e^{H}\sum_{k}\ti{g}^{i\ov{i}}|u_{ik}|^{2}-e^{H}(H')^{2}|\nabla u|_{g}^{2}|\nabla\ti{u}|_{\ti{g}}^{2},
\end{split}
\end{equation}
where we used $|\nabla\psi|_{g}^{2}\leq Ce^{-B_{0}\psi}$. Substituting \eqref{C1 eqn 3}, \eqref{C1 eqn 4} and \eqref{C1 eqn 5} into \eqref{C1 eqn 2}, we obtain
\begin{align*}
0 \geq {} & H'' |\nabla u|_g^2|\nabla \ti{u}|_{\ti{g}}^2 + \left(-\frac{\delta^2}{2}H' - C\right)|\nabla u|^2_g\tr{\ti{\omega}}{\omega}\\
  &+ (CH' + 2\beta)|\nabla u|^2_g - 2\beta\mathrm{Re}\langle\nabla h_\e,\overline{\nabla}u\rangle_g+ CH'e^{-B_0\psi} + CH'|\nabla\ti{u}|_{\ti{g}}^2,
\end{align*}
for some uniform constant $C > 1$. Picking then
\[
 B = \max\{(2/\delta^2)(C+1), B_0/\delta, 3\},
 \]
we may use the definition of $H$ to see that for $\beta \geq \beta_0 := C(B+1) + 1$, we have:
\begin{align}\label{thing}
 0\geq \frac{2|\nabla u|_g^2|\nabla \ti{u}|_{\ti{g}}^2}{(\ti{u} + 1)^3} + (\beta&+1)|\nabla u|^2_g - 2\beta|\nabla h_\e|_g|\nabla u|_g \\&- C(B+1)e^{-B_0\psi} - C(B+1)|\nabla\ti{u}|_{\ti{g}}^2. \nonumber
\end{align}
We may now assume that at $x_0$ we have both:
\[
 |\nabla u|_g^2 \geq C(B+1)(\ti{u}+1)^3
\]
and
\[
|\nabla u|_g \geq 2|\nabla h_\e|_g.
\]
If either condition fails, we obtain $|\nabla u|_{g}^{2}(x_{0}) \leq Ce^{-B_{0}\psi(x_{0})}$ directly. Otherwise, we still get:
\[
 C(B+1)e^{-B_0\psi} \geq |\nabla u|_g^2(x_0),
\]
from \eqref{thing}, as required.
\end{proof}

We now bound the Hessian near the boundary.

\begin{proposition}\label{boundary second order estimate}
Assume that the key condition \eqref{key'} is satisfied for each $\e > 0$. Then there exist uniform constants $B$ and $C$ such that
\[
|\nabla^{2}u|_{g} \leq Ce^{-B(F + \psi)} ~\text{~on $\de M$}.
\]
\end{proposition}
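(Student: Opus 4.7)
The plan is to adapt the classical CKNS--Guan--Chen boundary second-order strategy for the complex Monge--Amp\`ere equation, carefully tracking how the degenerate positivity in (\ref{key'}) produces the weight $e^{-B(F+\psi)}$. Fix a boundary point $x_{0}\in\partial M$ and take holomorphic coordinates $(z_{1},\dots,z_{n})$ vanishing at $x_{0}$ so that $g_{i\bar j}(x_{0}) = \delta_{ij}$, $\partial M = \{r = 0\}$ for a smooth defining function $r$ (with $r<0$ on $M^\circ$), and $T^{1,0}_{x_{0}}\partial M$ is spanned by $\partial_{z_{1}},\dots,\partial_{z_{n-1}}$. Decompose the Hessian of $u = u_{\e,\beta}$ at $x_{0}$ into tangential-tangential (TT), mixed tangential-normal (TN), and pure double-normal (NN) parts, and bound each separately.

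For TT entries, since $u-\vp_\e\equiv 0$ on $\partial M$, differentiating this equality twice along the boundary in the tangent directions $1\le\alpha,\beta\le n-1$ yields $(u-\vp_\e)_{\alpha\bar\beta}(x_0)$ as an algebraic expression in $\nabla u$, $\nabla\vp_\e$, $\nabla^2\vp_\e$ and derivatives of $r$. Combined with Lemma~\ref{first order estimate} and assumption (a), this gives $|u_{\alpha\bar\beta}(x_{0})|\le Ce^{-B\psi(x_{0})}$. For NN, once TT and TN are in hand, the Monge--Amp\`ere equation~(\ref{ma}) expresses $u_{n\bar n}(x_0)$ in terms of the remaining cofactors of $(\ti g_{i\bar j})$ and the right hand side, producing the required upper bound; a matching lower bound follows from $\ti\omega > 0$.

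The crux is the TN estimate. On a small half-neighborhood $\Omega := M\cap B_\rho(x_{0})$ I plan to construct a barrier of the form
\[
   \Phi \ :=\ A(\vp_\e - u)\ +\ Nr\ -\ Kr^{2}\ -\ \eta(F+\psi)
\]
(with the sign of the $r$ terms dictated by the weak pseudoconcavity of $\partial M$) and compare it to $\pm T(u-\vp_\e)$ for first-order tangential operators $T$ via the linearized operator $L := \ti g^{i\bar j}\partial_i\partial_{\bar j} - \beta$. The essential computation uses (\ref{key'}) to give
\[
   L(\vp_\e - u) \ \ge\ \bigl(e^{F} + \tfrac{\e}{2}\bigr)\tr{\ti\omega}{\omega}\ -\ n\ +\ \beta(u-\vp_\e),
\]
so for $A$ large the positive term $Ae^{F}\tr{\ti\omega}{\omega}$ is available to dominate $-\eta L(F+\psi)$, whose dangerous pieces $\ti g^{i\bar j}(F+\psi)_{i\bar j}$ are of size $Ce^{-B_{0}(F+\psi)}\tr{\ti\omega}{\omega}$ by (\ref{derivatives}); the defining-function terms $Nr - Kr^{2}$ contribute $-N\tr{\ti\omega}{\omega} + O(1)$ after applying $L$ and have the right sign along $\partial M$. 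Balancing $A$ against $\eta, N, K$ in terms of $B_0$, one verifies $L\Phi \leq L(\pm T(u-\vp_\e))$ on $\Omega$, while assumption (a), Lemma~\ref{first order estimate}, and the vanishing of $r$ on $\partial M$ yield $\Phi \geq |T(u-\vp_\e)|$ on $\partial\Omega$. The comparison principle then gives $\Phi \geq \pm T(u-\vp_\e)$ on $\Omega$; since $\Phi(x_0) = 0$, differentiating in the inward normal direction at $x_0$ produces the desired bound $|u_{\alpha\bar n}(x_0)|\le Ce^{-B(F+\psi)(x_0)}$.

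The main obstacle is precisely this balancing: making the subsolution gain $Ae^{F}\tr{\ti\omega}{\omega}$ strong enough to absorb the terms $\eta\,\ti g^{i\bar j}(F+\psi)_{i\bar j}$ of order $e^{-B_{0}(F+\psi)}\tr{\ti\omega}{\omega}$, without the $\beta$-term in $L$ ruining the comparison. This is exactly the step that uses the quantified positivity in assumption (b), and the choice of $B$ in the conclusion emerges from this balance. All other steps are standard once this barrier computation goes through.
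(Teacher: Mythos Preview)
Your outline reproduces the classical CKNS--Guan--Chen structure, and the TT and NN reductions are essentially correct (for NN you should also note that the lower bound on the tangential $(n-1)\times(n-1)$ minor uses the weak pseudoconcavity of $\partial M$ together with \eqref{key'}). The TN barrier, however, cannot close as written, and this is precisely where the paper's new idea sits.

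The balancing you describe fails quantitatively. The subsolution piece contributes $Ae^{F}\tr{\ti\omega}{\omega}$, but the terms you must absorb---both from $-\eta L(F+\psi)$ and, more seriously, from the second derivatives of $\vp_\e$ and $h_\e$ that appear in $\Delta_{\ti g}(\pm T(u-\vp_\e))$---are of order $e^{-B_0\psi}\tr{\ti\omega}{\omega}$ (or $e^{-B_0(F+\psi)}\tr{\ti\omega}{\omega}$). Since $F,\psi\le 0$ are allowed to tend to $-\infty$ on your half-ball $\Omega$ (the singular sets may meet $\partial M$; this is the whole point of the proposition), no fixed constant $A$ can make $Ae^{F}\ge Ce^{-B_0\psi}$. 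The $Nr-Kr^2$ terms contribute only $O(\tr{\ti\omega}{\omega})$ and do not help. Two further symptoms of the same problem: your claim $\Phi(x_0)=0$ is false, since $\Phi(x_0)=-\eta(F+\psi)(x_0)>0$, so you cannot differentiate the inequality at $x_0$; and $L(-\eta(F+\psi))$ carries the term $\eta\beta(F+\psi)\to-\infty$, which you flag but do not neutralize.

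The paper's fix is not to subtract $\eta(F+\psi)$ but to \emph{multiply the derivative terms by the weight} $e^{B(F+\psi)}$: the barriers have the shape
\[
\mu_1(u-\vp_\e)+e^{BF}\bigl(\mu_2|z|^2-e^{B\psi}(\de_{\tilde\gamma}U)^2\bigr),\qquad
\mu_1'(u-\vp_\e)+e^{BF}\bigl(\mu_2'|z|^2-e^{B\psi}|D_\gamma U|-e^{B\psi}(D_\gamma U)^2\bigr)+\xi.
\]
After this weighting the dangerous contributions become $CB^2e^{(B-B_0)(F+\psi)}\tr{\ti\omega}{\omega}$, and for $B\ge B_0+1$ this is $\le CB^2e^{F}\tr{\ti\omega}{\omega}$, which \emph{can} be absorbed by $\mu_1e^{F}\tr{\ti\omega}{\omega}$ with $\mu_1$ a genuine constant. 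The squared term $(D_\gamma U)^2$ is also essential: under $\Delta_{\ti g}$ it produces $-2\beta(D_\gamma U)^2$, which swallows the linear $\beta D_\gamma U$ term so that no $\beta$-dependence survives---this is exactly the ``$\beta$-term ruining the comparison'' issue you raise. Finally, since $D_\gamma$ has variable coefficients, cross terms $\ti g^{i\bar j}a_i u_{x_{2n}\bar j}$ appear; these are controlled by a preliminary weighted barrier $\xi$ built from $(\de_{2n-1}U)^2$.
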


\begin{proof}
Fix a point $p\in\partial M$, and center coordinates $(\{z_{i}\}_{i=1}^{n}, B_R)$ at $p$, where here $B_R$ is a ball of radius $R$. Write $z_{i}=x_{2i-1}+\sqrt{-1}x_{2i}$ for $1\leq i\leq n$. Let $r$ be a defining function for $M$ in $B_R$ (so that $\{r \leq 0\} = M\cap B_R$ and $\{r = 0\} = \partial M\cap B_R$). After making a linear change of coordinates, we may assume that at $p$
\[
r_{x_{2n}} = -1  \quad \text{and} \quad
r_{x_{\alpha}} = 0 \quad \text{for $1\leq\alpha\leq 2n-1$}.
\]
We then have that the Taylor expansion for $r$ near $p$ is of the form:
\[
r = \mathrm{Re}\left(\sqrt{-1}z_{n}+\sum_{i,j=1}^{n}b_{ij}z_{i}z_{j}\right)
+\sum_{i,j=1}^{n}a_{i\ov{j}}z_{i}\ov{z}_{j}+O(|z|^{3}) \quad \text{near $p$}.
\]
Thus, if we consider the quadratic change of coordinates
\[
z_{i}'=
\begin{cases}
z_{i} \ & \mbox{if $1\leq i\leq n-1$},\\
z_{n}-\sqrt{-1}\sum_{i,j=1}^{n}b_{ij}z_{i}z_{j} \ & \mbox{if $i=n$},
\end{cases}
\]
we see that
\begin{equation*}
  r(z) = -x_{2n}' + \sum_{i,j=1}^{n}a_{i\ov{j}}z_{i}'\ov{z}_{j}' + O(|z'|^{3}) ~\text{~near $p$}.
\end{equation*}
For convenience, we will use $z$ to again denote this new coordinate system, so that
\begin{equation*}
  r(z) = -x_{2n} + \sum_{i,j=1}^{n}a_{i\ov{j}}z_{i}\ov{z}_{j} + O(|z|^{3}) ~\text{~near $p$}.
\end{equation*}
In particular, after shrinking $R$, we can assume that,
\begin{equation}\label{derivative of r}
|r_{x_{2n}}+1| \leq \frac{1}{10} \ \ \text{and} \ \ |r_{\gamma}|\leq C|z|~\text{~for $1\leq\gamma\leq2n-1$},
\end{equation}
on $B_R$, for some uniform constant $C$.

As in \cite[pg. 272]{Bou}, we now define the tangent vector fields
\begin{equation*}
  D_{\gamma} = \frac{\de}{\de x_{\gamma}} -\frac{r_{x_{\gamma}}}{r_{x_{2n}}}\frac{\de}{\de x_{2n}} ~\text{~for $1\leq\gamma\leq2n-1$}
\end{equation*}
and the normal vector field
\begin{equation*}
  D_{2n} = -\frac{1}{r_{x_{2n}}}\frac{\de}{\de x_{2n}}.
\end{equation*}
Recall that we are writing:
\[
\ti{\omega} := \alpha+\e\omega+\ddbar u
\]
and
\begin{equation*}
  \ti{\omega} = \sqrt{-1}\sum_{i,j=1}^{n}\ti{g}_{i\ov{j}}dz_{i}\wedge d\ov{z}_{j}.
\end{equation*}
We also denote the inverse matrix of $(\ti{g}_{i\ov{j}})$ by $(\ti{g}^{i\ov{j}})$. Throughout, $C$ will be a constant, independent of $\e,\beta$, whose exact value may change from line to line.

We split the proof into three steps:

\bigskip
\noindent
{\bf Step 1.} The tangent-tangent derivatives.

\bigskip

Since $u=\vp_\e$ on $\de M$, at $0$ ($p\in\de M$), we have
\begin{equation*}
  |D_{\gamma}D_{\eta}u| = |D_{\gamma}D_{\eta}\vp_\e| \leq Ce^{-C\psi}  ~\text{~for $1\leq\gamma,\eta\leq 2n-1$},
\end{equation*}
as desired.

\bigskip
\noindent
{\bf Step 2.} The tangent-normal derivatives.

\bigskip

We define
\[
 U := u - h_\e, \quad \de_{\gamma}U := \frac{\de U}{\de x_{\gamma}}.
\]
For $1\leq\gamma\leq2n-1$, recalling the definition of $D_{\gamma}$, we have
\[
D_{\gamma}U =  \de_{\gamma}U-\frac{r_{x_{\gamma}}}{r_{x_{2n}}}\de_{2n}U.
\]
On $B_{R}\cap\de M$, since $U=0$, then we have $D_{\gamma}U=0$. Combining this with Lemma \ref{first order estimate} and \eqref{derivative of r}, for $1\leq\gamma\leq2n-1$, we have
\begin{equation}\label{de gamma U}
|\de_{\gamma}U| = \left|\frac{r_{x_{\gamma}}}{r_{x_{2n}}}\de_{2n}U\right| \leq Ce^{-B_{0}\psi}|z| \ \ \text{on $B_{R}\cap\de M$}.
\end{equation}
For notation, we will use $\gamma$ to denote real directions, and $i,j$ to denote complex directions. We will also need to single out the (real) $x_{2n-1}$ direction, so we will write
\[
\ti{\gamma} := 2n-1
\]
for convenience.

We begin with the exceptional $\ti{\gamma}$-direction. We will need to consider the following barrier function:
\[
\xi := \mu_1 (u - \vp_\e) + e^{B F}\left(\mu_2|z|^2- e^{B\psi}(\de_{\ti{\gamma}}U)^2\right),
\]
where $\mu_{1}$, $\mu_{2}$, and $B$ are large, uniform, constants to be determined such that:
\[
\xi\geq 0\text{ on } B_R\cap M.
\]
We will do this by a minimum principle argument. First, we show that $\xi$ is non-negative on the boundary of $B_R\cap M$. $\partial(B_R\cap M)$ has two components, $\partial B_R\cap M$ and $B_R\cap \partial M$. Recalling that $u\geq\vp_\e$, on $\partial B_R\cap M$, using Lemma \ref{first order estimate}, we see that
\begin{equation*}
\xi \geq e^{B F}\left(\mu_2|z|^2- e^{B\psi}(\de_{\ti{\gamma}}U)^2\right)
\geq e^{BF}\left(\mu_{2}R^{2}-Ce^{(B-2B_{0})\psi}\right).
\end{equation*}
On $B_R\cap \partial M$, using \eqref{de gamma U}, we see that
\[
\xi = e^{B F}\left(\mu_2|z|^2- e^{B\psi}(\de_{\ti{\gamma}}U)^2\right)
\geq e^{B F}\left(\mu_2|z|^2-Ce^{(B-2B_{0})\psi}|z|^2\right).
\]
Hence, for any choice of $B$ and $\mu_2$ sufficiently large, we can arrange that
\begin{equation}\label{tangent-normal equ 3}
\xi\geq 0 ~\text{on $\partial (B_R\cap M)$}.
\end{equation}
We will fix such a $\mu_2$ now once and for all, reserving the ability to increase $B$ as needed.

Suppose then that $x_0$ is a minimum point of $\xi$. If $x_0\in\p(B_R\cap M)$, then we are done, so suppose that $x_0$ is an interior minimum of $\xi$. Since $\psi, F$ are exponentially smooth, we have that $\xi$ is at least $C^2$ for all sufficiently large $B$, so that the minimum principle applies.

We thus wish to compute:
\begin{equation}\label{xi_equation}
\begin{split}
  \Delta_{\ti{g}}\xi = {} & \mu_{1}\Delta_{\ti{g}}(u-\vp_\e) + \mu_2\Delta_{\ti{g}}(e^{BF}|z|^2)-\Delta_{\ti{g}}\left(e^{B(F+\psi)}(\de_{\ti{\gamma}}U)^2\right).
\end{split}
\end{equation}
For the first term, we observe that:
\[
\Delta_{\ti{g}} (u - \vp_\e) = n - \tr{\ti{\omega}}{(\alpha+\ve\omega + \ddbar\vp_\e)} \leq n - (e^F + \e/2)\tr{\ti\omega}{\omega},
\]
by \eqref{key'}, and that
\begin{equation*}
 \frac{\ve}{2}\tr{\ti{\omega}}{\omega} \geq \frac{n \ve}{2}\left(\frac{\omega^n}{(\alpha +\e\omega + \ddbar u)^n}\right)^{1/n} = \frac{\ve}{2} ne^{-(\beta/n)(u - h_{\ve}) - \log(\ve/4)},
\end{equation*}
by the arithmetic-geometric mean inequality and (\ref{ma}). As $u-h_\e\leq 0$, we have then that
\begin{equation}\label{lower bound of trace}
\frac{\ve}{2}\tr{\ti{\omega}}{\omega} \geq \frac{\ve}{2} ne^{- \log(\ve/4)} = 2n.
\end{equation}
Thus,
\begin{equation}\label{tangent-normal first term}
 \mu_{1}\Delta_{\ti{g}} (u - \vp_\e) \leq -\mu_1(e^F + \e/4)\tr{\ti{\omega}}{\omega}.
\end{equation}
For the second term of \eqref{xi_equation}, by \eqref{derivatives} and a direct calculation, we obtain
\begin{equation}\label{tangent-normal second term}
  \mu_2\Delta_{\ti{g}}(e^{BF}|z|^2) \leq CB^{2}e^{(B-B_0)F}\tr{\ti{\omega}}{\omega}.
\end{equation}
For the third term of (\ref{xi_equation}), we have:
\begin{equation}\label{xi_one}
\begin{split}
 -\Delta_{\ti{g}}&\left(e^{B(F+\psi)}(\de_{\ti{\gamma}}U)^2\right) \\[1mm]
= {} & -(\de_{\ti\gamma}U)^{2}\Delta_{\ti{g}}(e^{B(F+\psi)})-e^{B(F+\psi)}\Delta_{\ti{g}}\left((\de_{\ti{\gamma}}U)^2\right) \\
& -2Be^{B(F+\psi)}\ti{g}^{i\ov{j}}\left((F+\psi)_{i}(\de_{\ov{j}}\de_{\ti{\gamma}}U)(\de_{\ti{\gamma}}U)
+(F+\psi)_{\ov{j}}(\de_{i}\de_{\ti{\gamma}}U)(\de_{\ti{\gamma}}U)\right) \\
\leq {} & CB^2e^{(B-B_0)(F+\psi)}\tr{\ti{\omega}}\omega-e^{B(F+\psi)}\Delta_{\ti{g}}\left((\de_{\ti{\gamma}}U)^2\right) \\
& +e^{B(F+\psi)}\ti{g}^{i\ov{j}}(\de_{i}\de_{\ti{\gamma}}U)(\de_{\ov{j}}\de_{\ti{\gamma}}U)
+4B^{2}e^{B(F+\psi)}(\de_{\ti{\gamma}}U)^{2}\ti{g}^{i\ov{j}}(F+\psi)_{i}(F+\psi)_{\ov{j}} \\
\leq {} & CB^{2}e^{(B-B_0)(F+\psi)}\tr{\ti{\omega}}\omega+e^{B(F+\psi)}\ti{g}^{i\ov{j}}(\de_{i}\de_{\ti{\gamma}}U)(\de_{\ov{j}}\de_{\ti{\gamma}}U)
-e^{B(F+\psi)}\Delta_{\ti{g}}\left((\de_{\ti{\gamma}}U)^2\right).
\end{split}
\end{equation}
We seek to bound
\begin{equation}\label{xi_two}
-\Delta_{\ti{g}}\left((\de_{\ti{\gamma}}U)^2\right)
= -2(\de_{\ti{\gamma}}U)\Delta_{\ti{g}}(\de_{\ti{\gamma}}U)-2\ti{g}^{i\ov{j}}(\de_{i}\de_{\ti{\gamma}}U)(\de_{\ov{j}}\de_{\ti{\gamma}}U).
\end{equation}
Applying $\de_{{\ti\gamma}}$ to (\ref{ma}) gives,
\begin{equation*}
  \tr{\ti{\omega}}{(\de_{{\ti\gamma}}(\alpha + {\ve\omega})+\de_{{\ti\gamma}}\ddbar u)} = \beta(\de_{\ti\gamma}U) + \tr{\omega}{(\de_{\ti\gamma} \omega)},
\end{equation*}
where we are letting $\de_{\ti\gamma}$ act on the components of $\alpha$ and $\omega$ in the (fixed) $z$-coordinates. Hence,
\begin{equation*}
\Delta_{\ti{g}}(\de_{{\ti\gamma}}u) = \tr{\ti{\omega}}{(\p_{\ti\gamma} \sqrt{-1}\p\pbar u)}
= \beta(\de_{\ti\gamma}U) + \tr{\omega}{(\de_{\ti\gamma} \omega)} - \tr{\ti{\omega}}(\de_{{\ti\gamma}}(\alpha + {\ve\omega}))
\end{equation*}
and
\[
\begin{split}
\Delta_{\ti{g}}(\de_{{\ti\gamma}}U)
= {} & \Delta_{\ti{g}}(\de_{{\ti\gamma}}u)-\Delta_{\ti{g}}(\de_{{\ti\gamma}}h_{\ve})\\
= {} & \beta(\de_{\ti\gamma}U) + \tr{\omega}{(\de_{\ti\gamma} \omega)} - \tr{\ti{\omega}}(\de_{{\ti\gamma}}(\alpha + {\ve\omega}))
-\Delta_{\ti{g}}(\de_{{\ti\gamma}}h_{\ve}).
\end{split}
\]
It then follows that
\begin{equation*}
-2(\de_{\ti{\gamma}}U)\Delta_{\ti{g}}(\de_{\ti{\gamma}}U) \leq  -2\beta(\de_{\ti\gamma}U)^{2}  + Ce^{-B_0\psi}\tr{\ti{\omega}}{\omega},
\end{equation*}
by Lemma \ref{first order estimate}, \eqref{estimates for h ve}, and \eqref{lower bound of trace}. Plugging this back into \eqref{xi_two} gives:
\begin{equation*}
-\Delta_{\ti{g}}\left((\de_{\ti{\gamma}}U)^2\right)
\leq -2\beta(\de_{\ti\gamma}U)^{2} + Ce^{-B_0\psi}\tr{\ti{\omega}}{\omega}
-2\ti{g}^{i\ov{j}}(\de_{i}\de_{\ti{\gamma}}U)(\de_{\ov{j}}\de_{\ti{\gamma}}U).
\end{equation*}
Substituting into \eqref{xi_one}, we get:
\begin{equation}\label{xi_three}
\begin{split}
-\Delta_{\ti{g}}(e^{B(F+\psi)}(\de_{\ti{\gamma}}U)^{2}) &\leq  CB^{2}e^{(B-B_0)(F+\psi)}\tr{\ti{\omega}}\omega \\
& -e^{B(F+\psi)}\ti{g}^{i\ov{j}}(\de_{i}\de_{\ti{\gamma}}U)(\de_{\ov{j}}\de_{\ti{\gamma}}U)-2\beta e^{B(F+\psi)}(\de_{\ti\gamma}U)^{2}.
\end{split}
\end{equation}
We can now combine \eqref{tangent-normal first term}, \eqref{tangent-normal second term}, and \eqref{xi_three} to get
\begin{equation*}
\begin{split}
\Delta_{\ti{g}}\xi \leq {} & -\mu_1(e^F + \e/4)\tr{\ti{\omega}}{\omega} + CB^{2}e^{(B-B_0)(F+\psi)}\tr{\ti{\omega}}{\omega} \\[2mm]
& -e^{B(F+\psi)}\ti{g}^{i\ov{j}}(\de_{i}\de_{\ti{\gamma}}U)(\de_{\ov{j}}\de_{\ti{\gamma}}U)-2\beta e^{B(F+\psi)}(\de_{\ti\gamma}U)^{2} \\[2mm]
\leq {} & -\left(\mu_1(e^F + \e/4)-CB^{2}e^{(B-B_0)(F+\psi)}\right)\tr{\ti{\omega}}{\omega} \\[1mm]
& -e^{B(F+\psi)}\ti{g}^{i\ov{j}}(\de_{i}\de_{\ti{\gamma}}U)(\de_{\ov{j}}\de_{\ti{\gamma}}U).
\end{split}
\end{equation*}
Thus, choosing $B$, $\mu_{1}$ sufficiently large, it then follows that
\begin{equation}\label{Delta xi}
\Delta_{\ti{g}}\xi < -e^{B(F+\psi)}\ti{g}^{i\ov{j}}(\de_{i}\de_{\ti{\gamma}}U)(\de_{\ov{j}}\de_{\ti{\gamma}}U).
\end{equation}
But since $x_{0}$ was assumed to be an interior minimum, we must have:
\[
\Delta_{\ti{g}}\xi(x_0) \geq 0,
\]
which is a contradiction. Hence, there are no interior minima for $\xi$, so that $\xi\geq0$ on $B_{R}\cap M$.

\bigskip

We will now use $\xi$ to bound the tangent-normal derivatives. Fix $1\leq \gamma\leq 2n-1$. We consider the following quantity:
\begin{equation*}
  w = \mu_1' (u - \vp_\e) + e^{B F}\left(\mu_2'|z|^2 - e^{B\psi}|D_\gamma U| - e^{B\psi}(D_\gamma U)^2\right) + \xi,
\end{equation*}
where $\mu_{1}'$, $\mu_{2}'$ are large, uniform, constants to be determined later and $B$ is the constant in the definition of $\xi$ (which we can still increase freely, up to increasing $\mu_1$ accordingly).

We now claim that $\mu_1'$ and $\mu_2'$ can be chosen such that
\begin{equation}\label{tangent-normal claim}
  w \geq 0 ~\text{on $B_R\cap M$}.
\end{equation}
Given the claim, we can control the tangent-normal derivatives as follows. Unwinding the definition of $\xi$, we see that:
\begin{equation*}
\begin{split}
w = {} & (\mu_1 + \mu'_1) (u - \vp_\e) \\
& + e^{B F}\left((\mu_2 + \mu_2')|z|^2 - e^{B\psi}|D_\gamma U| - e^{B\psi}(D_\gamma U)^2 - e^{B\psi}(\de_{\ti{\gamma}}U)^2\right).
\end{split}
\end{equation*}
Dropping the square terms, we get:
\begin{equation*}
  |D_\gamma U| \leq (\mu_1+\mu_1') e^{-B(F+\psi)}(u - \vp_\e) + (\mu_2+\mu_2') e^{-B \psi}|z|^2.
\end{equation*}
At $0$, both sides are 0, so
\begin{equation*}
|D_{2n}D_\gamma U| \leq \left|D_{2n}\left( (\mu_1+\mu_1') e^{-B(F+\psi)}(u - \vp_\e) + (\mu_2+\mu_2') e^{-B \psi}|z|^2 \right)\right|.
\end{equation*}
Combining this with \eqref{derivatives} and Lemma \ref{first order estimate}, we see that
\begin{equation*}
  |D_{2n}D_\gamma U| \leq CBe^{-(B+B_0)(F+\psi)}.
\end{equation*}
Recalling $U=u-h_{\ve}$ and using (\ref{estimates for h ve}), we then have
\begin{equation*}
  |D_{2n}D_{\gamma}u| \leq |D_{2n}D_\gamma U|+|D_{2n}D_{\gamma} h_\e| \leq CBe^{-(B+B_0)(F+\psi)},
\end{equation*}
as desired.

We will show \eqref{tangent-normal claim} by the minimum principle. Recalling that $u\geq\vp_{\ve}$ and $\xi\geq0$, on $\de B_{R}\cap M$, using Lemma \ref{first order estimate},
\[
w \geq e^{BF}\left(\mu_{2}'R^{2}-Ce^{(B-2B_{0})\psi}\right).
\]
On $B_{R}\cap \de M$, since $D_{\gamma}U=0$, then
\[
w \geq \mu_{2}'e^{BF}|z|^{2} \geq 0.
\]
Thus, choosing $\mu_2'$ large enough ensures
\[
w\geq 0 ~\text{on $\partial (B_R\cap M)$}.
\]

Suppose then that $x_0$ is an interior minimum point of $w$ -- if it is on the boundary, then we are done by the above. Additionally, if we have that $D_\gamma U(x_0) = 0$, $e^{F(x_0)} = 0$, or $e^{\psi(x_0)} = 0$, then clearly $w(x_0) \geq 0$ also (recall that $\xi\geq0$ on $B_{R}\cap M$), and hence $w\geq 0$ on all of $B_R\cap M$. Thus we may assume that $x_0$ is an interior minimum such that $D_\gamma U(x_0)\neq 0$, $e^{F(x_0)} > 0$, and $e^{\psi(x_0)} > 0$ -- we will assume that $D_\gamma U(x_{0})<0$ here, as the alternative is basically the same. This ensures that now $w$ is at least $C^2$ at $x_0$, so we may take its Laplacian.

At $x_{0}$, we wish to compute
\begin{equation}\label{tangent-normal equ 1}
\begin{split}
\Delta_{\ti{g}}w = {} & \mu_{1}'\Delta_{\ti{g}}(u-\vp_\e) + \mu_2'\Delta_{\ti{g}}(e^{BF}|z|^2) +\Delta_{\ti{g}}\left(e^{B(F+\psi)}(D_\gamma U)\right)\\
{} &  - \Delta_{\ti{g}}\left(e^{B(F+\psi)}(D_\gamma U)^{2}\right) + \Delta_{\ti{g}} \xi.
\end{split}
\end{equation}
By the same calculations as for \eqref{tangent-normal first term} and \eqref{tangent-normal second term}, the first two terms of (\ref{tangent-normal equ 1}) can be controlled by:
\begin{equation}\label{tangent-normal equ 5}
\begin{split}
\mu_{1}'\Delta_{\ti{g}}(u-\vp_\e) + &\mu_2'\Delta_{\ti{g}}(e^{BF}|z|^2) \\
\leq {} & -\mu_1'(e^F + \e/4)\tr{\ti{\omega}}{\omega}+CB^{2}e^{(B-B_0)F}\tr{\ti{\omega}}{\omega}.
\end{split}
\end{equation}
For the third term of (\ref{tangent-normal equ 1}), by $U=u-h_{\ve}$, \eqref{derivatives}, \eqref{estimates for h ve}, Lemma \ref{first order estimate} and the Cauchy-Schwarz inequality, we have
\begin{equation}\label{tangent-normal equ 2}
\begin{split}
        \Delta_{\ti{g}}&\left(e^{B(F+\psi)}(D_\gamma U)\right) \\[1mm]
   = {} & e^{B(F+\psi)}\Delta_{\ti{g}}(D_\gamma U)+(D_\gamma U)\Delta_{\ti{g}}\left(e^{B(F+\psi)}\right) \\[1mm]
        & +Be^{B(F+\psi)}\ti{g}^{i\ov{j}}\left((F+\psi)_{i}(\de_{\ov{j}}D_{\gamma}U)+(F+\psi)_{\ov{j}}(\de_{i}D_{\gamma}U)\right) \\[1mm]
\leq {} & e^{B(F+\psi)}\Delta_{\ti{g}}(D_\gamma u)-e^{B(F+\psi)}\Delta_{\ti{g}}(D_{\gamma}h_\e)
         +CB^{2}e^{(B-B_0)(F+\psi)}\tr{\ti{\omega}}{\omega} \\[1mm]
        & +e^{B(F+\psi)}\ti{g}^{i\ov{j}}(\de_{i}D_{\gamma}U)(\de_{\ov{j}}D_{\gamma}U)+B^{2}e^{B(F+\psi)}\ti{g}^{i\ov{j}}(F+\psi)_{i}(F+\psi)_{\ov{j}} \\[1mm]
\leq {} & e^{B(F+\psi)}\Delta_{\ti{g}}(D_{\gamma}u)+CB^{2}e^{(B-B_0)(F+\psi)}\tr{\ti{\omega}}{\omega}
          +e^{B(F+\psi)}\ti{g}^{i\ov{j}}(\de_{i}D_{\gamma}U)(\de_{\ov{j}}D_{\gamma}U).
\end{split}
\end{equation}
For the third order term $\Delta_{\ti{g}}(D_\gamma u)$, recalling the definition of $D_{\gamma}$, we have
\begin{equation*}
  D_{\gamma} = \frac{\de}{\de x_{\gamma}}+a\frac{\de}{\de x_{2n}}, ~\text{~where~} a = -\frac{r_{x_{\gamma}}}{r_{x_{2n}}}.
\end{equation*}
It follows then from a direct calculation that
\[
\Delta_{\ti{g}}(D_{\gamma}u)
= \tr{\ti{\omega}}{(D_{\gamma}\ddbar u)} + u_{x_{2n}}\Delta_{\ti{g}}a
+2\mathrm{Re}\left(\ti{g}^{i\ov{j}}a_{i}u_{x_{2n}\ov{j}}\right),
\]
where
\[
u_{x_{2n}\ov{j}} = \frac{\de}{\de\ov{z}_{j}}\left(\frac{\de u}{\de x_{2n}}\right).
\]
In the following argument, we use subscripts to denote partial derivatives for convenience. Since $\frac{\de}{\de x_{2n}}=\sqrt{-1}\left(2\frac{\de}{\de z_{n}}-\frac{\de}{\de x_{2n-1}}\right)$, we have
\[
\begin{split}
2\mathrm{Re}\left(\ti{g}^{i\ov{j}}a_{i}u_{x_{2n}\ov{j}}\right)
= {} & -2\mathrm{Im}\left(\ti{g}^{i\ov{j}}a_{i}(2u_{n\ov{j}}-u_{\ti{\gamma} \ov{j}})\right) \\
= {} & -4\mathrm{Im}\left(\ti{g}^{i\ov{j}}a_{i}(\ti{g}_{n\ov{j}}-\alpha_{n\ov{j}}-\ve g_{n\ov{j}})\right)
+2\mathrm{Im}\left(\ti{g}^{i\ov{j}}a_{i}u_{\ti{\gamma} \ov{j}}\right)
\end{split}
\]
(recall that $\ti{\gamma}  = 2n-1$). Applying $D_{\gamma}$ to (\ref{ma}) gives,
\begin{equation*}
  \tr{\ti{\omega}}{(D_{\gamma}(\alpha + {\ve\omega})+D_{\gamma}\ddbar u)} = \beta (D_{\gamma}U) + \tr{\omega}{(D_{\gamma} \omega)}.
\end{equation*}
Hence,
\begin{equation*}
\begin{split}
\Delta_{\ti{g}}(D_{\gamma}u)
& = {} \beta(D_{\gamma}U) + \tr{\omega}{(D_\gamma\omega)} - \tr{\ti{\omega}}(D_{\gamma}(\alpha + {\ve\omega})) \\
     & +u_{x_{2n}}\Delta_{\ti{g}}a -4\mathrm{Im}\left(\ti{g}^{i\ov{j}}a_{i}(\ti{g}_{n\ov{j}}-\alpha_{n\ov{j}}-\ve g_{n\ov{j}})\right) +2\mathrm{Im}\left(\ti{g}^{i\ov{j}}a_{i}u_{\ti{\gamma} \ov{j}}\right).
\end{split}
\end{equation*}
Combining this with \eqref{estimates for h ve}, Lemma \ref{first order estimate}, $\tr{\ti{\omega}}{\omega}\geq 4n$ (cf. (\ref{lower bound of trace})),  $U=u-h_{\ve}$, and the Cauchy-Schwarz inequality, we see that:
\begin{equation}\label{tangent-normal equ 4}
\begin{split}
\Delta_{\ti{g}}(D_{{\gamma}}u)
\leq {} & \beta D_{\gamma}U + Ce^{-B_0\psi}\tr{\ti{\omega}}{\omega}+2\mathrm{Im}\left(\ti{g}^{i\ov{j}}a_{i}u_{\ti{\gamma}\ov{j}}\right) \\
\leq {} & \beta D_{\gamma}U + Ce^{-B_0\psi}\tr{\ti{\omega}}{\omega}+2\mathrm{Im}\left(\ti{g}^{i\ov{j}}a_{i}(\p_{\ti{\gamma}}\p_{\ov{j}} U)\right) \\
\leq {} & \beta D_{\gamma}U + Ce^{-B_0\psi}\tr{\ti{\omega}}{\omega} + C\sqrt{\tr{\ti{\omega}}{\omega}}\sqrt{\ti{g}^{i\ov{j}}(\p_{\ti{\gamma}}\p_{i} U)( \p_{\ti{\gamma}}\p_{\ov{j}} U)}.
\end{split}
\end{equation}
We also have the corresponding lower bound:
\begin{equation}\label{tangent-normal equ 6}
\Delta_{\ti{g}}(D_{{\gamma}}u) \geq  \beta D_{\gamma}U - Ce^{-B_0\psi}\tr{\ti{\omega}}{\omega}
-C\sqrt{\tr{\ti{\omega}}{\omega}}\sqrt{\ti{g}^{i\ov{j}}(\p_{\ti{\gamma}}\p_{i} U)( \p_{\ti{\gamma}}\p_{\ov{j}} U)},
\end{equation}
Substituting \eqref{tangent-normal equ 4} into \eqref{tangent-normal equ 2} and using the Cauchy-Schwarz inequality, we get the estimate:
\begin{equation}\label{tangent-normal third term}
\begin{split}
\Delta_{\ti{g}}&\left(e^{B(F+\psi)}D_\gamma U\right) \\[1mm]
\leq {} & \beta e^{B(F+\psi)}D_\gamma U+CB^{2}e^{(B-B_0)(F+\psi)}\tr{\ti{\omega}}{\omega} \\[1mm]
& +\frac{1}{4}e^{B(F+\psi)}\ti{g}^{i\ov{j}}(\p_{\ti{\gamma}}\p_{i} U)( \p_{\ti{\gamma}}\p_{\ov{j}} U)
+e^{B(F+\psi)}\ti{g}^{i\ov{j}}(\de_{i}D_{\gamma}U)(\de_{\ov{j}}D_{\gamma}U).
\end{split}
\end{equation}
For the fourth term of (\ref{tangent-normal equ 1}), using $U=u-h_{\ve}$, (\ref{estimates for h ve}), (\ref{tangent-normal equ 4}), (\ref{tangent-normal equ 6}), Lemma \ref{first order estimate} and the Cauchy-Schwarz inequality, at the expense of increasing $B_{0}$, we compute
\begin{equation*}
\begin{split}
-\Delta_{\ti{g}}&\left((D_\gamma U)^{2}\right) \\
= {} & -2(D_{\gamma}U)\Delta_{\ti{g}}(D_\gamma U)-2\ti{g}^{i\ov{j}}(\de_{i}D_{\gamma}U)(\de_{\ov{j}}D_{\gamma}U) \\
= {} & -2(D_{\gamma}U)\Delta_{\ti{g}}(D_\gamma u)+2(D_{\gamma}U)\Delta_{\ti{g}}(D_{\gamma}h_\e)-2\ti{g}^{i\ov{j}}(\de_{i}D_{\gamma}U)(\de_{\ov{j}}D_{\gamma}U) \\
\leq {} & -2\beta(D_{\gamma}U)^{2}+Ce^{-B_0\psi}\tr{\ti{\omega}}{\omega}
+\frac{1}{4}\ti{g}^{i\ov{j}}(\p_{\ti{\gamma}}\p_{i} U)( \p_{\ti{\gamma}}\p_{\ov{j}} U)-2\ti{g}^{i\ov{j}}(\de_{i}D_{\gamma}U)(\de_{\ov{j}}D_{\gamma}U),
\end{split}
\end{equation*}
so that we have
\begin{equation}\label{tangent-normal fourth term}
\begin{split}
-\Delta_{\ti{g}}&\left(e^{B(F+\psi)}(D_\gamma U)^{2}\right) \\[1mm]
= {} & -(D_\gamma U)^{2}\Delta_{\ti{g}}(e^{B(F+\psi)})-e^{B(F+\psi)}\Delta_{\ti{g}}\left((D_\gamma U)^{2}\right) \\
& -2Be^{B(F+\psi)}\ti{g}^{i\ov{j}}\left((F+\psi)_{i}(\de_{\ov{j}}D_{\gamma}U)(D_{\gamma}U)+(F+\psi)_{\ov{j}}(\de_{\ov{i}}D_{\gamma}U)(D_{\gamma}U)\right) \\
\leq {} & -2\beta e^{B(F+\psi)}(D_{\gamma}U)^{2}+CB^2e^{(B-B_0)(F+\psi)}\tr{\ti{\omega}}\omega
+\frac{1}{4}e^{B(F+\psi)}\ti{g}^{i\ov{j}}(\p_{\ti{\gamma}}\p_{i} U)( \p_{\ti{\gamma}}\p_{\ov{j}} U) \\
& -2e^{B(F+\psi)}\ti{g}^{i\ov{j}}(\de_{i}D_{\gamma}U)(\de_{\ov{j}}D_{\gamma}U)
+e^{B(F+\psi)}\ti{g}^{i\ov{j}}(\de_{i}D_{\gamma}U)(\de_{\ov{j}}D_{\gamma}U) \\
&+4B^{2}e^{B(F+\psi)}(D_{\gamma}U)^{2}\ti{g}^{i\ov{j}}(F+\psi)_{i}(F+\psi)_{\ov{j}} \\
\leq {} & CB^{2}e^{(B-B_0)(F+\psi)}\tr{\ti{\omega}}\omega-e^{B(F+\psi)}\ti{g}^{i\ov{j}}(\de_{i}D_{\gamma}U)(\de_{\ov{j}}D_{\gamma}U)
+\frac{1}{4}e^{B(F+\psi)}\ti{g}^{i\ov{j}}(\p_{\ti{\gamma}}\p_{i} U)( \p_{\ti{\gamma}}\p_{\ov{j}} U).
\end{split}
\end{equation}

Now, substituting  (\ref{Delta xi}), (\ref{tangent-normal equ 5}), (\ref{tangent-normal third term}), and (\ref{tangent-normal fourth term}) into (\ref{tangent-normal equ 1}), at $x_{0}$, we obtain
\begin{equation*}
\begin{split}
\Delta_{\ti{g}}w \leq {} &
-\mu_{1}'(e^F + \e/4)\tr{\ti{\omega}}{\omega} + CB^{2}e^{(B-B_0)(F+\psi)}\tr{\ti{\omega}}{\omega}+\beta e^{B(F+\psi)}D_\gamma U \\
& -\frac{1}{2}e^{B(F+\psi)}\ti{g}^{i\ov{j}}(\p_{\ti{\gamma}}\p_{i} U)( \p_{\ti{\gamma}}\p_{\ov{j}} U).
\end{split}
\end{equation*}
It then follows that
\[
\Delta_{\ti{g}}w \leq
-\mu_{1}'(e^F + \e/4)\tr{\ti{\omega}}{\omega} + CB^{2}e^{(B-B_0)(F+\psi)}\tr{\ti{\omega}}{\omega}+\beta e^{B(F+\psi)}D_\gamma U.
\]
Using then the fact that $D_{\gamma}U(x_{0})<0$, we see that
\begin{equation*}
  \Delta_{\ti{g}}w(x_0) \leq -\mu_{1}'e^{F}\tr{\ti{\omega}}\omega+CB^{2}e^{(B-B_0)(F+\psi)}\tr{\ti{\omega}}\omega.
\end{equation*}
Choosing $B$, $\mu_{1}'$ sufficiently large, it then follows from the fact that $e^{F(x_0)} > 0$ that
\begin{equation*}
\Delta_{\ti{g}}w(x_0) < 0.
\end{equation*}
But since $x_{0}$ was assumed to be an interior minimum, we must have:
\[
\Delta_{\ti{g}} w(x_0) \geq 0,
\]
which is a contradiction. Hence, \eqref{tangent-normal claim} follows.

\bigskip
\noindent
{\bf Step 3.} The normal-normal derivatives.

\bigskip

By steps 1 and 2 we have
\begin{equation*}\label{normal-normal equ 1}
  |D_{\gamma}D_{\eta}u(p)| + |D_{\gamma}D_{2n}u(p)| \leq Ce^{-B_0(F+\psi)} ~\text{~for $1\leq\gamma,\eta\leq 2n-1$}.
\end{equation*}
Thus, to bound the normal-normal derivative, it is sufficient to bound $|u_{n\ov{n}}|$.

Expanding out the determinant $\det(\ti{g}_{i\ov{j}})_{1\leq i,j\leq n}$, we see that we already have the bound
\begin{equation}\label{uh}
  |\det(\ti{g}_{i\ov{j}})_{1\leq i,j\leq n}-\ti{g}_{n\ov{n}}\det(\ti{g}_{i\ov{j}})_{1\leq i,j\leq n-1}| \leq Ce^{-B_0(F+\psi)}.
\end{equation}
Recalling \eqref{ma} and $u-h_{\ve}\leq0$, it is clear that
\begin{equation*}
  \det(\ti{g}_{i\ov{j}})_{1\leq i,j\leq n} = e^{\beta(u-h_\e)+n\log(\ve/4)}\det(g_{i\ov{j}})_{1\leq i,j\leq n} \leq C,
\end{equation*}
so that \eqref{uh} implies
\begin{equation}\label{normal-normal equ 2}
  \ti{g}_{n\ov{n}}\det(\ti{g}_{i\ov{j}})_{1\leq i,j\leq n-1} \leq Ce^{-B_0(F+\psi)}.
\end{equation}

Next we show that there is a uniform lower bound for $\det(\ti{g}_{i\ov{j}})_{1\leq i,j\leq n-1}$. Note that the holomorphic tangent bundle to $\partial M$ at $p$, denoted by $T_{\de M}^{h}$, is spanned by $\{\frac{\de}{\de z_{i}}\}_{i=1}^{n-1}$. Then
\begin{equation}\label{normal-normal equ 3}
\begin{split}
\ti{\omega}|_{T_{\de M}^{h}} = {} & (\alpha+\ve\omega+\ddbar u)|_{T_{\de M}^{h}} \\
                             = {} & (\alpha +\ve\omega + \ddbar\vp_\e)|_{T_{\de M}^{h}}+\ddbar(u-\vp_\e)|_{T_{\de M}^{h}} \\
                          \geq {} & e^{F}\omega|_{T_{\de M}^{h}}+\ddbar(u-\vp_\e)|_{T_{\de M}^{h}},
\end{split}
\end{equation}
where we used \eqref{key'} in the last inequality. Since $u-\vp_\e\equiv 0$ on $\de M$, by \cite[Lemma 7.3]{Bou},
\begin{equation*}\label{normal-normal equ 4}
  \ddbar(u-\vp_\e)|_{T_{\de M}^{h}} = \left(\nu\cdot(u-\vp_\e)\right)L_{\de M,\nu},
\end{equation*}
where $\nu$ is an outward pointing normal vector field on $\de M$ and $L_{\de M,\nu}$ is the corresponding Levi-form of $\de M$. Recalling that $\de M$ is weakly pseudoconcave, we have $L_{\de M,\nu}\leq 0$. Since $u-\vp_\e\geq0$ on $M$ and $u-\vp_\e\equiv 0$ on $\de M$, we have
\begin{equation*}
  \nu\cdot(u-\vp_\e) \leq 0,
\end{equation*}
so \eqref{normal-normal equ 3} implies
\begin{equation*}
  \ti{\omega}|_{T_{\de M}^{h}} \geq e^{F}\omega|_{T_{\de M}^{h}}.
\end{equation*}
Taking wedges, we then get
\begin{equation*}
  \det(\ti{g}_{i\ov{j}})_{1\leq i,j\leq n-1} \geq \frac{1}{C}e^{(n-1)F}.
\end{equation*}
Combining this with (\ref{normal-normal equ 2}) and the definition of $\ti{\omega}$ we have
\begin{equation*}
  |u_{n\ov{n}}| = |\ti{g}_{n\ov{n}}-\alpha_{n\ov{n}}- \e g_{n\ov{n}}| \leq Ce^{-B(F+\psi)},
\end{equation*}
at $p$, as desired.
\end{proof}

We can now bound the Laplacian on the interior:

\begin{proposition}\label{Laplacian estimate}
 Assume we are in the situation in Proposition \ref{boundary second order estimate}. Then there exist uniform constants $\beta_0,B$, and $C > 0$ such that:
\[
|\Delta_g u| \leq Ce^{-B\ti{\psi}}\text{ for all }\beta \geq \beta_0,
\]
where here $\widetilde{\psi}$ is as in Proposition \ref{forms}.
\end{proposition}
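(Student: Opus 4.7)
The plan is to apply the maximum principle to the auxiliary function
\[
H := \log \tr{g}{\ti{\omega}} + A_1 \psi - A_2 (u - \ti{\psi}),
\]
where $\ti{\omega} = \alpha + \e\omega + \ddbar u$ and $A_1, A_2$ are large uniform constants to be fixed. This will give $\tr{g}{\ti{\omega}} \leq C e^{-B\ti{\psi}}$, which upgrades to the stated bound since $\Delta_g u = \tr{g}{\ti{\omega}} - \tr{g}{(\alpha + \e\omega)}$ and the latter trace is uniformly bounded. Because $u = u_{\e,\beta}$ is smooth on $M$ while $\ti{\psi} \to -\infty$ on $\mathrm{Sing}(\ti{\psi})$, $H \to -\infty$ on the singular set, so $\sup_M H$ is attained at a point of $\overline{M}\setminus \mathrm{Sing}(\ti{\psi})$.

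If the sup is attained on $\de M$, then by Proposition \ref{boundary second order estimate} we have $\tr{g}{\ti{\omega}} \leq Ce^{-B_1(F+\psi)}$ on the boundary. Combined with $u|_{\de M} = \vp_\e \geq \psi \geq \ti{\psi}$ and the explicit form $\ti{\psi} = \psi + (\delta/2C)F$ from Proposition \ref{forms}, a direct computation shows that, once $A_1 \geq B_1$ and $A_2 \geq 2B_1 C/\delta$, the coefficients in front of $F$ and $\psi$ in the resulting estimate $H|_{\de M} \leq \log C + (A_2(\delta/2C) - B_1)F + (A_1 - B_1)\psi$ are non-negative, so $H|_{\de M} \leq \log C$. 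Otherwise $\sup_M H = H(x_0)$ for some $x_0 \in M^\circ \setminus \mathrm{Sing}(\ti{\psi})$, where $\psi, \ti{\psi}$ are locally smooth, and we may apply $\Delta_{\ti{g}} H(x_0) \leq 0$.

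At $x_0$ there are three ingredients. First, the Yau--Aubin--Chern-Lu inequality applied to the logarithm of \eqref{ma}, together with $\Delta_g u = \tr{g}{\ti{\omega}} - \tr{g}{(\alpha + \e\omega)}$ and $|\Delta_g h_\e| \leq C e^{-B_0 \psi}$ from \eqref{estimates for h ve}, yields
\[
\Delta_{\ti{g}}\log\tr{g}{\ti{\omega}} \geq \beta - \frac{C\beta(1 + e^{-B_0\psi})}{\tr{g}{\ti{\omega}}} - C_R\tr{\ti{g}}{\omega},
\]
where $C_R$ bounds the bisectional curvature of $g$. Second, the identity $\ti{\omega} - \ddbar(u - \ti{\psi}) = \alpha + \e\omega + \ddbar\ti{\psi} \geq (\delta/2)\omega$ from Proposition \ref{forms} gives $-A_2\Delta_{\ti{g}}(u - \ti{\psi}) \geq -A_2 n + A_2(\delta/2)\tr{\ti{g}}{\omega}$. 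Third, $\alpha + \ddbar\psi \geq \delta\omega$ combined with \eqref{WLOG} gives $A_1\Delta_{\ti{g}}\psi \geq A_1(\delta - 1)\tr{\ti{g}}{\omega}$. Summing and choosing $A_2$ so that $A_2(\delta/2) + A_1(\delta - 1) - C_R \geq 1$ and $\beta_0 := 4A_2 n + 1$, the inequality $\Delta_{\ti{g}}H(x_0) \leq 0$ forces $\tr{g}{\ti{\omega}}(x_0) \leq 4C e^{-B_0\psi(x_0)}$ for $\beta \geq \beta_0$. Combined with $-A_2(u - \ti{\psi})(x_0) \leq 0$ (from $u \geq \psi \geq \ti{\psi}$) and $(A_1 - B_0)\psi(x_0) \leq 0$ (choosing $A_1 \geq B_0$), this gives $H(x_0) \leq C$.

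In both cases $\sup_M H \leq C$, which together with the upper bound on $u$ and $\ti{\psi} \leq \psi$ yields $\tr{g}{\ti{\omega}} \leq C' e^{-B\ti{\psi}}$ for $B := A_1 + A_2$, and the Laplacian bound for $u$ follows. The main obstacle is the term $C\beta e^{-B_0\psi}/\tr{g}{\ti{\omega}}$ appearing in the Yau inequality, which arises because the derivatives of $h_\e$ are only exponentially controlled near $\mathrm{Sing}(\psi)$ (cf.~\eqref{estimates for h ve}); this is precisely why $\psi$ must be built into the test function directly, so that the local bound $\tr{g}{\ti{\omega}}(x_0) \leq Ce^{-B_0\psi(x_0)}$ at the interior maximum can be converted into a uniform upper bound on $H$ itself.
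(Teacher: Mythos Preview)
Your proof is correct and follows essentially the same maximum-principle strategy as the paper: apply the Aubin--Yau inequality to $\log\tr{g}{\ti\omega}$, use the strict positivity of $\alpha+\ddbar\ti\psi$ to control $\tr{\ti g}{\omega}$, and invoke Proposition~\ref{boundary second order estimate} at boundary maxima. The only real difference is cosmetic: the paper uses the single test function $Q=\log\tr{\omega}{\ti\omega}-B\big(u-(1+\delta/2)\ti\psi\big)$, where the extra factor $(1+\delta/2)$ manufactures the $\frac{\delta^2}{4}\tr{\ti\omega}{\omega}$ positivity and simultaneously allows the bound $\tr{\omega}{\ti\omega}(x_0)\le Ce^{-B_0\psi(x_0)}\le Ce^{-B_0\ti\psi(x_0)}$ to be absorbed back into $Q$ via $\ti u\ge -(\delta/2)\ti\psi$, all with one free constant. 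You instead add a separate $A_1\psi$ term to absorb the $e^{-B_0\psi}$ coming from the $h_\e$ estimates, at the cost of carrying two constants $A_1,A_2$ and an extra (negative) $A_1(\delta-1)\tr{\ti g}{\omega}$ term that $A_2$ must then dominate. Both routes close; the paper's is marginally tidier, and your remark that $\psi$ ``must'' be built in directly is not strictly true, since the paper shows $\ti\psi$ alone suffices with the right coefficient.
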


\begin{proof}
We may assume without loss of generality that $F\leq0$. By the construction of $\ti{\psi}$, we have
\[
\ti{\psi} \leq \psi \leq 0  \ \text{and} \ \alpha+\ddbar\ti{\psi} \geq \frac{\delta}{2}\omega.
\]
Recalling $u-\psi\geq0$ and (\ref{WLOG}), it then follows that
\begin{equation*}\label{Laplacian estimate equ 1}
u-\ti{\psi} \geq 0
\end{equation*}
and
\begin{equation}\label{Laplacian estimate equ 2}
\alpha+\ve\omega+\left(1+\delta/2\right)\ddbar\ti{\psi}
\geq \left(1+\delta/2\right)\frac{\delta}{2}\omega-\frac{\delta}{2}\omega
\geq \frac{\delta^{2}}{4}\omega.
\end{equation}

The trick is to again use:
\begin{equation*}\label{definition of tilde u}
  \ti{u} := u - (1+\delta/2)\ti{\psi}.
\end{equation*}
By (\ref{Laplacian estimate equ 2}), it is clear that
\begin{equation}\label{Laplacian estimate equ 3}
-\Delta_{\ti{g}}\ti{u} \geq -n+\frac{\delta^{2}}{4}\tr{\ti{\omega}}\omega.
\end{equation}
Consider the following quantity:
 \[
 Q = \log \tr{\omega}{\ti\omega} - B\ti{u},
 \]
where $B$ is a constant to be determined later. We will bound $Q$ above using the maximum principle. Let $x_0$ be a maximum point of $Q$. It suffices to prove
\[
(\tr{\omega}{\ti{\omega}})(x_{0}) \leq Ce^{-C\ti{\psi}(x_{0})} \ \ \text{for some $C$},
\]
as then:
\[
Q(x_{0}) \leq \log C-C\ti{\psi}(x_{0})-B\ti{u}(x_{0})
\leq\log C+(B\delta/2-C)\ti{\psi}(x_{0}) \leq C,
\]
as long as $B\geq 2C/\delta$.

Now, if $x_0\in\partial M$, then we are already done by Proposition \ref{boundary second order estimate}, as:
\[
\tr{\omega}{\ti\omega} = \tr{\omega}{(\alpha + \e\omega)} + \Delta_g u
\leq Ce^{-B(\psi+F)} \leq Ce^{-C\ti{\psi}} \ \ \text{on $\de M$}.
\]
Note also that $x_0$ cannot occur on $\mathrm{Sing}(\psi)$. We may then compute at $x_0$, using (\ref{Laplacian estimate equ 3}) and the estimate of \cite{A,Y}:

\begin{equation*}
\begin{split}
0 \geq \Delta_{\ti g} Q(x_0)
\geq {}
& \frac{1}{\tr{\omega}{\ti\omega}} (-C(\tr{\ti\omega}{\omega})(\tr{\omega}{\ti\omega}) - \tr{\omega}{\Ric(\ti\omega)})
-Bn+\frac{B\delta^{2}}{4}\tr{\ti{\omega}}\omega \\
\geq {}
& \left(\frac{B\delta^2}{4} -C\right)\tr{\ti\omega}{\omega} - \frac{\tr{\omega}{(\Ric(\omega) - \beta\ddbar(u - h_\e))}}{\tr{\omega}{\ti\omega}} - Bn \\
\geq {}
& \frac{-\tr{\omega}{(\Ric(\omega) - \beta\ti\omega +\beta(\alpha + \e\omega))} - \beta Ce^{-B_0\psi}}{\tr{\omega}{\ti\omega}} - Bn \\
\geq {}
& \frac{\beta}{2} + \frac{-C\beta - C\beta e^{-B_0\psi}}{\tr{\omega}{\ti\omega}}
\end{split}
\end{equation*}
for $B,\beta$ sufficiently large. Rearranging gives:
\[
\frac{C + C e^{-B_0\psi(x_0)}}{\tr{\omega}{\ti\omega}(x_0)} \geq \frac{1}{2}.
\]
It then follows that
\[
\tr{\omega}{\ti\omega}(x_0)
\leq 2C(1+e^{-B_0\psi(x_0)})\
\leq 4Ce^{-B_0\ti\psi(x_0)}
\]
as required.

Thus, we conclude that:
\[
Q\leq C
\]
for a uniform $C$. It follows that:
\[
\tr{\omega}{\ti{\omega}} \leq Ce^{-B\ti\psi},
\]
and hence:
\[
(\Delta_{g}u) = (\tr{\omega}{\ti{\omega}})-\tr{\omega}(\alpha+\ve\omega) \leq Ce^{-B\ti{\psi}}.
\]

\end{proof}

\begin{proposition}\label{Hess}
 Assume we are in the situation in Proposition \ref{boundary second order estimate}. Then there exist uniform constants $\beta_0, B,$ and $C > 0$ such that
\[
|\nabla^{2}u|_{g} \leq Ce^{-B\ti{\psi}}\text{ for all }\beta > \beta_0,
\]
where $\ti{\psi}$ is as in Proposition \ref{forms}.
\end{proposition}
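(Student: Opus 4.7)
The plan is to bound the largest eigenvalue $\lambda_{1}$ of the real Hessian $\nabla^{2}u$ by the weight $Ce^{-B\ti{\psi}}$, adapting the method of Chu--Tosatti--Weinkove \cite{CTW0} to our degenerate setting. Once such a bound is achieved, combining it with the upper bound on the complex Hessian coming from Proposition \ref{Laplacian estimate} (and the lower bound $\alpha+\e\omega+\ddbar u\geq 0$) yields $|\nabla^{2}u|_{g}\leq Ce^{-B\ti{\psi}}$. The natural test quantity is
\[
Q \;=\; \log\lambda_{1}(\nabla^{2}u) \;+\; \phi\!\left(|\nabla u|_{g}^{2}\right) \;-\; A\,\ti{u},
\]
where $\ti{u}=u-(1+\delta/2)\ti{\psi}$ as in Proposition \ref{Laplacian estimate}, $\phi$ is an auxiliary function of the form $\phi(s)=-\tfrac{1}{2}\log(K-s)$ for a suitably large constant $K$, and $A$ is a large constant to be fixed. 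Since $\lambda_{1}$ may fail to be smooth at points where the top eigenvalue is repeated, the standard first step is to perturb: at a maximum point $x_{0}$ of $Q$, pick an orthonormal frame diagonalizing $\nabla^{2}u(x_{0})$, choose a symmetric matrix $(B^{\alpha\beta})$ with $B^{\alpha\beta}(x_{0})=\delta^{\alpha\beta}\lambda_{\alpha}(x_{0})$ for $\alpha\geq 2$ and $B^{11}=0$, and work with the smoothed quantity $\ti{Q}=\log\lambda_{1}(\nabla^{2}u-B)+\phi+A(-\ti{u})$ near $x_{0}$.

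If $x_{0}\in\partial M$, then Proposition \ref{boundary second order estimate} combined with the definition of $\ti{\psi}$ immediately gives $\lambda_{1}(x_{0})\leq Ce^{-B(F+\psi)(x_{0})}\leq Ce^{-B'\ti{\psi}(x_{0})}$, so it remains to treat an interior maximum. At such a point, one differentiates the Monge--Amp\`ere equation \eqref{ma} twice in the maximum eigendirection $V_{1}$ of $\nabla^{2}u$ and applies $\Delta_{\ti{g}}$ to $\log\lambda_{1}$ as in \cite{CTW0}. This produces the main good term $\tfrac{1}{\lambda_{1}}\ti{g}^{i\ov{i}}u_{V_{1}V_{1}i\ov{i}}$ which, via differentiation of \eqref{ma}, contributes $\beta(u-h_{\e})_{V_{1}V_{1}}/\lambda_{1}$, together with a bad third-order term $-\tfrac{1}{\lambda_{1}^{2}}\ti{g}^{i\ov{i}}|(\nabla^{2}u)_{V_{1}V_{1}i}|^{2}$ and curvature contributions bounded by $C\,\tr{\ti{g}}{g}$. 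The term $-A\ti{u}$ produces $A\tfrac{\delta^{2}}{4}\tr{\ti{g}}{g}$ thanks to \eqref{Laplacian estimate equ 3}, while $\phi'$ contributes another positive multiple of $\tr{\ti{g}}{g}$.

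The central obstacle is controlling the bad third-order term. Following \cite{CTW0}, one splits it into two parts according to whether the index $i$ satisfies $\lambda_{i}(\nabla^{2}u)$ is close to $\lambda_{1}$ or not, and absorbs the dangerous part using the corresponding good term coming from $\phi'(|\nabla u|_{g}^{2})(|\nabla u|_{g}^{2})_{i}$ via a Cauchy--Schwarz trick, exactly as in \cite[Lemma 2.2]{CTW0}. The remaining error terms involve derivatives of $\alpha$, $\omega$, and $h_{\e}$, together with $\beta(u-h_{\e})_{V_{1}V_{1}}$; the latter is handled by rewriting $(u-h_{\e})_{V_{1}V_{1}}$ in terms of $(u-h_{\e})_{i\ov{j}}$ plus tangent-tangent derivatives that are controlled using Lemma \ref{first order estimate}, \eqref{estimates for h ve}, and \eqref{derivatives}. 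All degenerate prefactors $e^{-B_{0}\psi}$, $e^{-B_{0}F}$ that arise can be absorbed into an overall factor $e^{-C\ti{\psi}}$ by enlarging $B$ relative to $B_{0}$, since $\ti{\psi}\leq \psi$ and $\ti{\psi}\leq \tfrac{\delta}{2C}F$. For $\beta\geq\beta_{0}$ and $A,K$ large enough, the good terms dominate, forcing
\[
\lambda_{1}(x_{0}) \;\leq\; Ce^{-C\ti{\psi}(x_{0})},
\]
and then the $-A\ti{u}$ term in $Q$ gives $Q\leq C$ globally, from which the desired bound on $|\nabla^{2}u|_{g}$ follows as in Proposition \ref{Laplacian estimate}.
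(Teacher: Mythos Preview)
Your overall strategy is correct and matches the paper's approach: both argue via a maximum principle for a quantity of the form $\log\lambda_{1} + (\text{auxiliary gradient term}) - A\ti{u}$, invoke the boundary estimate from Proposition~\ref{boundary second order estimate} when $x_{0}\in\partial M$, and at an interior maximum apply the Chu--Tosatti--Weinkove machinery with the perturbation trick for the top eigenvalue.

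There is, however, a genuine gap in your choice of auxiliary function. You take $\phi(s)=-\tfrac{1}{2}\log(K-s)$ applied to $s=|\nabla u|_{g}^{2}$ with $K$ a fixed constant. But in the present degenerate setting Lemma~\ref{first order estimate} only gives $|\nabla u|_{g}\leq Ce^{-B\psi}$, which blows up along $\mathrm{Sing}(\psi)$; there is no uniform constant $K$ with $|\nabla u|_{g}^{2}<K$ on all of $M$, so your $Q$ is not even well-defined on $M\setminus\mathrm{Sing}(\psi)$ and the maximum principle cannot be applied. The paper resolves this by inserting a weight and working with
\[
\rho\bigl(e^{B\ti{\psi}}|\nabla u|_{g}^{2}\bigr),\qquad \rho(s)=-\tfrac{1}{2}\log\Bigl(1+\sup_{M}\bigl(e^{B\ti{\psi}}|\nabla u|_{g}^{2}\bigr)-s\Bigr),
\]
which is legitimate precisely because $e^{B\ti{\psi}}|\nabla u|_{g}^{2}$ is uniformly bounded (this is \eqref{Hess equ 4}). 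This is the key modification of \cite[Lemma~4.3]{CTW} over the non-degenerate \cite{CTW0} that you cite; the extra factor $e^{B\ti{\psi}}$ also generates additional error terms when computing $\Delta_{\ti{g}}\rho$, but these are absorbed by choosing $B$ large, as in \eqref{Hess equ 2}. Your sketch also slightly misdescribes the handling of $\beta(u-h_{\e})_{V_{1}V_{1}}$: one simply uses $u_{V_{1}V_{1}}=\lambda_{1}$ and $|(h_{\e})_{V_{1}V_{1}}|\leq Ce^{-B_{0}\psi}$ from \eqref{estimates for h ve}, giving directly $\beta(\lambda_{1}-Ce^{-C\ti{\psi}})\geq \tfrac{\beta}{2}$ once $\lambda_{1}$ is assumed large, with no need to pass through complex Hessian components.
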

\begin{proof}
We already have that the Hessian is bounded on the boundary by Proposition \ref{boundary second order estimate}. We may then apply the maximum principle argument in \cite[Lemma 4.3]{CTW} using $\ti{\psi}$ instead of $\psi$, which gives us the estimate everywhere, as desired. Note that, although it is assumed in \cite{CTW} that $\psi$ has analytic singularities, it is easy to see that the proof only needs the weaker assumption of exponential smoothness, in the specific form of \eqref{derivatives}.

For the reader's convenience, we give a brief sketch here. Recalling Lemma \ref{first order estimate} and Propositions \ref{boundary second order estimate} and \ref{Laplacian estimate}, it is clear that
\begin{equation}\label{Hess equ 4}
\sup_{M}(e^{B_{0}\ti{\psi}}|\nabla u|_{g}^{2}+e^{B_{0}\ti{\psi}}|\Delta u|)+\sup_{\de M}(e^{B_{0}\ti{\psi}}|\nabla^{2}u|_{g}) \leq C.
\end{equation}
Without loss of generality, we assume that $\ti{\psi}\leq-1$. We consider the following quantity:
\[
Q = \log\lambda_{1}+\rho(e^{B\ti{\psi}}|\nabla u|_{g}^{2})-A\ti{u},
\]
where $\lambda_{1}$ is the largest eigenvalue of the real Hessian $\nabla^{2}u$, $\ti{u}$ is as in Proposition \ref{Laplacian estimate}, $A$ and $B$ are positive constants to be determined, and the function $\rho$ is given by
\[
\rho(s) = -\frac{1}{2}\log\left(1+\sup_{M}(e^{B\ti{\psi}}|\nabla u|_{g}^{2})-s\right).
\]
Let $x_{0}$ be a maximum point of $Q$. By a similar argument to that in Proposition \ref{Laplacian estimate}, it suffices to prove
\[
\lambda_{1}(x_{0}) \leq e^{-C\ti{\psi}(x_{0})}.
\]
If $x_{0}\in\de M$, then we are done by (\ref{Hess equ 4}). Thus, we assume that $x_{0}$ is an interior point and $Q$ is smooth at $x_{0}$ (otherwise, we just need to apply a perturbation argument as in \cite{CTW}). We compute everything at $x_{0}$. Choose holomorphic normal coordinates such that
\[
g_{i\ov{j}} = \delta_{ij}, \quad
\ti{g}_{i\ov{j}} = \delta_{ij}\ti{g}_{i\ov{i}}, \quad
\ti{g}_{1\ov{1}} \geq \cdots \geq \ti{g}_{n\ov{n}} \quad
\text{at $x_0$}.
\]
Applying $\de_{k}$ to the logarithm of (\ref{ma}), we have
\[
\ti{g}^{i\ov{i}}\de_{k}(\ti{g}_{i\ov{i}}) = \beta(u_{k}-(h_{\ve})_{k}).
\]
It then follows that
\begin{equation*}
\begin{split}
& \Delta_{\ti{g}}(e^{B\ti{\psi}}|\nabla u|_{g}^{2}) \\
\geq {}
& \frac{e^{B\ti{\psi}}}{2}\sum_{k}\ti{g}^{i\ov{i}}\left(|u_{ik}|^{2}+|u_{i\ov{k}}|^{2}\right)
-CB^{2}e^{(B-C)\ti{\psi}}\sum_{i}\ti{g}^{i\ov{i}}-C\beta e^{(B-C)\ti{\psi}} \\
\geq {}
& \frac{e^{B\ti{\psi}}}{2}\sum_{k}\ti{g}^{i\ov{i}}\left(|u_{ik}|^{2}+|u_{i\ov{k}}|^{2}\right)
-\sum_{i}\ti{g}^{i\ov{i}}-\beta,
\end{split}
\end{equation*}
after choosing $B$ sufficiently large such that $CB^{2}e^{(B-C)\ti{\psi}}\leq1$. Since $\rho'\leq\frac{1}{2}$, we obtain
\begin{equation}\label{Hess equ 2}
\begin{split}
\Delta_{\ti{g}}(\rho(e^{B\ti{\psi}}|\nabla u|_{g}^{2}))
& \geq \frac{\rho'e^{B\ti{\psi}}}{2}\sum_{k}\ti{g}^{i\ov{i}}\left(|u_{ik}|^{2}+|u_{i\ov{k}}|^{2}\right) \\
& +\rho''\ti{g}^{i\ov{i}}|\de_{i}(e^{B\ti{\psi}}|\nabla u|_{g}^{2})|^{2}-\frac{1}{2}\sum_{i}\ti{g}^{i\ov{i}}-\frac{\beta}{2}.
\end{split}
\end{equation}

As in \cite[pg. 297]{CTW}, let $V_{\alpha}$ be the unit eigenvector corresponding to $\lambda_{\alpha}$ (the eigenvalues of $\nabla^{2}u$ at $x_{0}$). Extend each $V_{\alpha}$ to a vector field near $x_{0}$ with constant coefficients. Applying $V_{1}V_{1}$ to the logarithm of (\ref{ma}) and using $V_{1}V_{1}(u)=\lambda_{1}$, we have
\[
\begin{split}
\ti{g}^{i\ov{i}}V_{1}V_{1}(\ti{g}_{i\ov{i}}) = {}
& \ti{g}^{p\ov{p}}\ti{g}^{q\ov{q}}|V_{1}(\ti{g}_{p\ov{q}})|^{2}+V_{1}V_{1}(\log\det g)+\beta V_{1}V_{1}(u-h_{\ve}) \\
\geq {}
& \ti{g}^{p\ov{p}}\ti{g}^{q\ov{q}}|V_{1}(\ti{g}_{p\ov{q}})|^{2}-C+\beta(\lambda_{1}-Ce^{-C\ti{\psi}}),
\end{split}
\]
where we used (\ref{estimates for h ve}) and $\ti{\psi}\leq\psi$ in the second inequality. Without loss of generality, we assume that $\lambda_{1}\geq4Ce^{-C\ti{\psi}}+4C$. It then follows that
\[
\ti{g}^{i\ov{i}}V_{1}V_{1}(\ti{g}_{i\ov{i}})
\geq \ti{g}^{p\ov{p}}\ti{g}^{q\ov{q}}|V_{1}(\ti{g}_{p\ov{q}})|^{2}+\frac{\beta}{2},
\]
which implies
\begin{equation}\label{Hess equ 3}
\begin{split}
\Delta_{\ti{g}}(\log\lambda_{1})
\geq {}
2\sum_{\alpha>1}\frac{\ti{g}^{i\ov{i}}|\de_{i}(u_{V_{\alpha}V_{1}})|^{2}}{\lambda_{1}(\lambda_{1}-\lambda_{\alpha})}
+\frac{\ti{g}^{p\ov{p}}\ti{g}^{q\ov{q}}|V_{1}(\ti{g}_{p\ov{q}})|^{2}}{\lambda_{1}}
-\frac{\ti{g}^{i\ov{i}}|\de_{i}(u_{V_{1}V_{1}})|^{2}}{\lambda_{1}^{2}}+\frac{\beta}{2}.
\end{split}
\end{equation}
Combining (\ref{Laplacian estimate equ 3}), (\ref{Hess equ 2}), (\ref{Hess equ 3}) and the rest of arguments of \cite[Lemma 4.3]{CTW}, we obtain $\lambda_{1}(x_{0})\leq Ce^{-C\ti{\psi}(x_{0})}$, as required.

\end{proof}

We may now finish as follows. By \cite[Proposition 2.3]{Ber}, we have:
\[
 u_{\e,\beta} \xrightarrow{C^0} V_\e
\]
where:
\[
  V_\e := \sup\{v\in\PSH(M,\alpha+\e\omega)\ |\ v\leq h_{\ve}\}.
\]
As mentioned earlier, the $V_{\ve}$ decrease pointwise to $V$ as $\ve$ decreases to $0$. Using (\ref{zero order estimate}), Lemma \ref{first order estimate} and Proposition \ref{Hess}, we establish a uniform $C^{1,1}$ estimate for $u$ on compact subsets away from $\mathrm{Sing}(\psi)$, which implies $V\in C^{1,1}_{\textrm{loc}}(M\setminus \mathrm{Sing}(\psi))$, as required.
\end{proof}

\section{Geodesics between Singular K\"ahler Metrics}\label{geometry}

We now show that our results apply in the setting of regularity of geodesics between singular K\"ahler metrics.

\begin{proof}[Proof of Theorem \ref{cor}]
Let $(X_0,\omega_0)$ be a compact K\"ahler variety, without boundary, and:
\[
  \mu: (X,\omega) \rightarrow (X_0,\omega_0)
\]
a smooth resolution of the singularities of $X_0$ with simple normal crossings, which exists thanks to Hironaka's theorem \cite{Hir}. Let $\mu^{-1}(X_{0, \mathrm{Sing}}) = E = \cup_{k=1}^m E_k$ be the exceptional divisor with smooth irreducible components $E_k$. Let $\alpha_0 := \mu^*\omega_0\geq 0$, which will be a smooth semi-positive form. It is well-known that $[\alpha_0]$ is a big and nef class, and that $E_{nK}(\alpha_0) = \mathrm{Supp}(E)$. Consider smooth Hermitian metrics $h_k$ on $\mathcal O(E_k)$ and defining sections $s_k$ for each $E_k$.

Elementary results in several complex variables will now show:
\begin{equation}\label{bad}
\alpha_0^n \geq b\left(\prod_{k=1}^m |s_k|_{h_k}^{a_k}\right) \omega^n
\end{equation}
for fixed constants $a_k > 0$ and a $b > 0$ depending on $\omega_0$. To see this, we work locally -- cover $X_0$ by open charts $U_i$ such that for each $i$ there exists an embedding:
\[
 \iota_i : U_i \hookrightarrow \Omega_i \subset \C^N,
\]
with $N$ uniformly large, such that $\omega_0$ extends to a smooth K\"ahler form (which we will also call $\omega_0$) on the open set $\Omega_i$. Relabeling $\mu$ to be $\iota_i\circ \mu$, we have that $\mu^*\omega_0$ is unchanged (as the pullback of a composition is the composition of the pullbacks), so we may work with a holomorphic map between smooth spaces. Fix coordinates $z$ on $\mu^{-1}(\Omega_i)$ and $x$ on $\Omega_i$, and define the Jacobian of $\mu$ to be the $n\times N$ matrix:
\[
 \mathrm{Jac}(\mu) := \left(\frac{\partial \mu^k}{\partial z^j}\right)_{\substack{1\leq j\leq n \\ 1\leq k\leq N}},
\]
where $\mu^k$ is the $k^\text{th}$ coordinate function of $\mu$ on $\Omega_i$.

Putting $e^j := \sqrt{-1}dz^j\wedge d\ov{z}^j$, one can then compute that:
\[
 \alpha_0^n(z) = \det(\mathrm{Jac}(\mu)\cdot \omega_0(\mu(z)) \cdot\ov{\mathrm{Jac}(\mu)}^T) n!e^1\wedge\ldots\wedge e^n
\]
where we are expressing $\omega_0(x)$ as an $N\times N$ matrix in the $x$-coordinates. Letting $c > 0$ be a constant such that $\omega_0 \geq c\omega_{\textrm{Eucl}}$ on all charts $\Omega_i$ (which we can do, after possibly shrinking them slightly, as there are only finitely many), we then have:
\[
 \alpha_0^n(z) \geq c^n \det(\mathrm{Jac}(\mu)\cdot\ov{\mathrm{Jac}(\mu)}^T) n!e^1\wedge\ldots\wedge e^n.
\]
%We now use \cite[Lemma, pg. 304]{Lu} to see that:
By the Cauchy-Binet formula, we have:
\[
\det\left(\mathrm{Jac}(\mu)\cdot\ov{\mathrm{Jac}(\mu)}^T\right) = \sum_{\substack{n\times n\text{ minors}\\ J_k\text{ of }\mathrm{Jac}(\mu)}} |\det(J_k)|^2
\]
so that:
\[
 \alpha_0^n \geq c^n \left(\sum_{\substack{n\times n\text{ minors}\\ J_k\text{ of }\mathrm{Jac}(\mu)}} |\det(J_k)|^2\right)n!e^1\wedge\ldots\wedge e^n.
\]
Each determinant in the sum is a holomorphic function, and furthermore, we know that their common zero locus is $E_{nK}(\alpha_0) = E$, as $\mu$ is a local biholomorphism if and only if $\mathrm{Jac}(\mu)$ has full-rank, which is only true when at least one of the determinants is non-zero. Thus, by the Weierstrass preparation theorem and the fact that $E$ has simple normal crossings, we know that we can express each determinant (locally) as a product of the $s_i$ to some powers, as well as some other local holomorphic functions that do not vanish along all of $E$ -- up to estimating these, the smooth Hermitian metrics, and $\omega^n$, we then see the claim \eqref{bad}.

We may now use the discussion immediately following Proposition \ref{forms} to see that:
\[
\alpha_0 \geq ce^F \omega,
\]
where:
\[
F := \log\left(b\prod_{k=1}^m |s_k|_{h_k}^{a_k}\right),
\]
and $c$ depends on an upper bound for $\alpha_0$ (which always exists as it is a smooth form). Up to shrinking $b$, we may arrange that $F\leq 0$, and note that $F$ has analytic singularities only along $E_{nK}(\alpha_0)$. For a very large constant $C$ then, we have that $F\in\PSH(X, C\omega)$, by the Poincar\'{e}-Lelong formula:
\[
 \ddbar F = \sum_{k=1}^m a_k ([E_k] - R_k) \geq -C\omega,
\]
so we can apply Proposition \ref{forms} to get the key condition \eqref{key}. Note that the resulting $\ti{\psi}$ actually has analytic singularities only along $E_{nK}(\alpha_0)$, so our estimates will be optimal.\\

To now apply this to the geodesic, we will need to translate this onto the product space $X\times A$, where $A$ is the annulus:
\[
 A := \{\tau\in\C\ |\  e^{-1} < |\tau| < 1\}.
\]
Let $\pi$ be the projection onto $X$ and $p$ the projection onto $A$, and define $\alpha := \pi^*\alpha_0$. Throughout, we will use:
\[
t := -\log|\tau|.
\]
Consider two K\"ahler metrics $\omega_1$ and $\omega_2$ on $X_0$ such that $\alpha_1 := \mu^*\omega_1$ and $\alpha_2 :=\mu^*\omega_2$ are cohomologous to $\alpha_0$. Fix a K\"ahler form $\omega$ on $X$ such that
\[
\alpha_k\leq \omega\text{ for }k = 0,1,2,
\]
constants $c, B >0$, and an exponentially smooth, strictly $\alpha_0$-psh $\ti{\psi}$ as above such that:
\[
\alpha_k \geq ce^{B\ti{\psi}}\omega\text{ for }k = 1,2.
\]
There then exist two smooth functions $\vp_1,\vp_2$ such that:
\[
 \alpha_k = \alpha_0 + \ddbar \vp_k,\ \ k= 1,2.
\]
The geodesic between $\alpha_1$ and $\alpha_2$ is then defined to be the envelope:
\[
 V := \sup \{ v\in\PSH(X\times A, \alpha)\ |\ v^{*}|_{\{t = 0\}}\leq \pi^*\vp_1,\ v^{*}|_{\{t = 1\}}\leq \pi^*\vp_2\}.
\]
Fix a large constant $C$ such that $\vp_2 - (C-1) \leq \vp_1 \leq \vp_2 + (C-1)$. Let $f$ be the solution to the Dirichlet problem on $A$:
\[
 \begin{cases}
  \ddbar f = \omega_{\textrm{Eucl}},\\
  f|_{\partial A} = 0.
 \end{cases}
\]
We then define $\vp$ to be the following subsolution:
\[
 \vp(x, \tau) := \widetilde{\max}\{\pi^*\vp_1(x) - Ct , \pi^*\vp_2(x) - C(1-t)\} + p^*f(\tau),
\]
where $\widetilde{\max}$ is a regularized maximum function with error $1/2$. Observe that, for both $k= 1, 2$, on $X\times A$ we have:
\[
 \alpha + \ddbar (\pi^*\vp_k \pm Ct + p^*f) \geq c e^{B\pi^*\ti{\psi}}\pi^*\omega + \sqrt{-1}\partial_\tau\overline{\partial}_\tau(\mp C\log|\tau| + p^*f)
\]
\[
 \geq c e^{B\pi^*\ti{\psi}}  (\pi^*\omega + p^*\omega_{\textrm{Eucl}}).
\]
Thus, by an elementary property of $\widetilde{\max}$ \cite[Lemma 5.18]{Dem}, we have that:
\[
 \alpha + \ddbar \vp \geq ce^{B\pi^*\ti{\psi}} (\pi^*\omega + p^*\omega_{\textrm{Eucl}}),
\]
also, which is the key condition \eqref{key} that we require (as we just take $\vp_\e = \vp$ for all $\e > 0$).
\end{proof}

We now show regularity of geodesics between K\"ahler currents with analytic singularities. We will actually show a more general statement, applicable to potentials which are exponentially smooth and satisfy condition \eqref{key}:

\begin{corollary}\label{cor2}
(Theorem \ref{fake}) Let $(X,\omega)$ be a smooth K\"ahler manifold without boundary, $[\alpha]$ a big and nef class, and $\psi\in\PSH(X,\alpha)$ strictly psh and exponentially smooth. Let $\vp_1,\vp_2\in\PSH(X,\alpha)$ be exponentially smooth functions with the same singularity type such that $\psi\leq \vp_1,\vp_2$ and $\vp_1$ and $\vp_2$ satisfy condition \eqref{key}. Then the geodesic connecting $\vp_1$ and $\vp_2$ is in $C^{1,1}_{\text{\textrm{loc}}}((X\setminus \mathrm{Sing}(\psi))\times A)$, where $A\subset\C$ is the annulus.
\end{corollary}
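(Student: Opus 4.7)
The plan is to reduce to Theorem \ref{theorem} applied on the product $M := X \times A$, equipped with the K\"ahler form $\omega_M := \pi^*\omega + p^*\omega_{\textrm{Eucl}}$ and reference form $\ti{\alpha} := \pi^*\alpha$, where $\pi, p$ are the projections onto $X$ and $A$, respectively. The boundary $\partial M = X \times \partial A$ is Levi-flat (its defining function depends only on $\tau$) and hence weakly pseudoconcave, as required.

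For the $\ti{\psi}$-big data on $M$, I would let $f$ be the smooth solution on $\overline{A}$ of $\ddbar f = \omega_{\textrm{Eucl}}$ with $f|_{\partial A} = 0$, and set $\ti{\psi} := \pi^*\psi + \delta\, p^*f$ for a small $\delta > 0$. Then $\ti{\alpha} + \ddbar\ti{\psi} \geq \delta'\omega_M$ follows from $\alpha + \ddbar\psi \geq \delta''\omega$, exponential smoothness of $\ti{\psi}$ is inherited from that of $\psi$ together with smoothness of $f$, and $\mathrm{Sing}(\ti{\psi}) = \mathrm{Sing}(\psi) \times A$ since $f$ is bounded. Nefness of $\ti{\alpha}$ on $M$ is immediate: pull back the nef approximants for $\alpha$ and add a small multiple of $p^*f$ to absorb the annular direction. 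To construct boundary data, fix $C_0 > 0$ with $|\vp_1 - \vp_2| < C_0 - 1$ on $X$ (possible by the same-singularity-type assumption), and let $\vp_{k,\e} \in \PSH(X, \alpha + \e\omega) \cap C^\infty(X)$ be the sequences decreasing to $\vp_k$ supplied by condition \eqref{key}. With $t := -\log|\tau|$, define
\[
  \ti{\vp}_\e(x, \tau) := \widetilde{\max}\bigl\{ \pi^*\vp_{1,\e}(x) - C_0 t,\ \pi^*\vp_{2,\e}(x) - C_0(1 - t) \bigr\} + p^*f(\tau),
\]
where $\widetilde{\max}$ is a regularized maximum with small error. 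These $\ti{\vp}_\e$ are smooth on $\overline{M}$ and decrease to a limit whose restrictions to $\{t = 0\}$ and $\{t = 1\}$ agree with $\pi^*\vp_1$ and $\pi^*\vp_2$ up to fixed constants, so that the envelope they determine coincides with the geodesic in question.

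To verify condition (b) of Theorem \ref{theorem}, note that $t$ is pluriharmonic on $A$, whence
\[
  \ti{\alpha} + \e\omega_M + \ddbar\bigl(\pi^*\vp_{k,\e} - C_0 t + p^*f\bigr) = \pi^*(\alpha + \e\omega + \ddbar\vp_{k,\e}) + (1 + \e)\, p^*\omega_{\textrm{Eucl}} \geq c\, e^{B_0 \pi^*\psi}\omega_M,
\]
and this positivity is preserved by $\widetilde{\max}$ via \cite[Lemma 5.18]{Dem}. Since $\ti{\psi} - \pi^*\psi$ is bounded, the same bound holds with $\ti{\psi}$ in the exponent after adjusting constants. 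Condition (a) follows from the derivative bounds on $\vp_{k,\e}$ supplied by \eqref{key}, smoothness of $f$ on $\overline{A}$, and uniform derivative estimates for $\widetilde{\max}$. Theorem \ref{theorem} then yields $V \in C^{1,1}_{\textrm{loc}}(M \setminus \mathrm{Sing}(\ti{\psi})) = C^{1,1}_{\textrm{loc}}\bigl((X \setminus \mathrm{Sing}(\psi)) \times A\bigr)$. The main obstacle I anticipate is the bookkeeping required to track the constants $B_0, c$ uniformly through the product and regularized-max constructions so that conditions (a) and (b) for $\ti{\vp}_\e$ are achieved simultaneously; the individual computations are standard, but one must ensure the single potential $\ti{\psi}$ governs both the derivative bounds and the lower positivity after the lift to $M$.
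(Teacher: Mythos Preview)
Your reduction to Theorem \ref{theorem} on $M = X\times A$ is the right framework, and your verification of the $\ti\psi$-big and nef conditions is fine. The genuine gap is the existence of the smooth approximations $\vp_{k,\e}$. You write that they are ``supplied by condition \eqref{key}'', but in the statement of Corollary \ref{cor2} the phrase ``$\vp_1$ and $\vp_2$ satisfy condition \eqref{key}'' means only that the \emph{fixed} functions obey $\alpha + \ddbar\vp_k \geq ce^{B_0\psi}\omega$; it does not furnish any family indexed by $\e$. The functions $\vp_1,\vp_2$ are merely exponentially smooth and hence in general singular along $\mathrm{Sing}(\psi)$, so your $\ti\vp_\e$, built as a regularized max of $\pi^*\vp_{1,\e}$ and $\pi^*\vp_{2,\e}$, has no reason to lie in $C^\infty(\overline{M})$ as Theorem \ref{theorem}(a) requires. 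You also invoke derivative bounds on $\vp_{k,\e}$ ``supplied by \eqref{key}'', but \eqref{key} is a lower Hessian bound only and says nothing about $|\nabla\vp_{k,\e}|$ or $|\nabla^2\vp_{k,\e}|$.

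Constructing such smooth approximations, with \emph{both} the uniform derivative bounds of condition (a) and the uniform strict positivity of condition (b), is the actual content of the proof and does not come for free from, say, Demailly regularization. The paper's device is to introduce a third term: it solves the auxiliary Monge--Amp\`ere equations $(\alpha + (\e/2)\omega + \ddbar v_\e)^n = e^{\beta_0 v_\e}\omega^n$ on $X$, obtains uniform Hessian bounds $|\nabla^2 v_\e|_g \leq Ce^{-B\psi}$ for the $v_\e$ via the machinery of Section \ref{bignef}, and then sets
\[
\vp_\e = \widetilde{\max}\{\pi^*\vp_1 - Ct,\ \pi^*\vp_2 - C(1-t),\ \pi^*v_\e - C_\e\} + p^*f,
\]
with $C_\e$ chosen so that the smooth floor $\pi^*v_\e - C_\e$ dominates near $\mathrm{Sing}(\psi)$ (forcing smoothness) while being small enough elsewhere that $\vp_\e = \vp$ there (forcing the positivity \eqref{key}). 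Without this or an equivalent mechanism, your family $\ti\vp_\e$ need not satisfy the hypotheses of Theorem \ref{theorem}.
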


\begin{proof}
 The idea is to construct a good sequence of K\"ahler potentials for $\{\alpha + \e\omega\}$, and then take regularized maximums with $\vp_1$ and $\vp_2$, which will preserve the estimates we need. Specifically, recall that $(X,\omega)$ is a K\"ahler manifold without boundary, and $[\alpha]$ is a nef and big class on $X$. Our sequence of potentials will be the (smooth) solutions to the following Monge-Amp\`ere equations:
 \begin{equation}\label{this}
  \begin{cases}
   (\alpha + (\e/2)\omega + \ddbar v_\e)^n = e^{\beta_0 v_\e}\omega^n,\\
   \alpha + (\e/2)\omega + \ddbar v_\e > 0,
  \end{cases}
 \end{equation}
 where $\beta_0 > 0$ is a fixed number such that we have the estimates:
 \begin{equation}\label{estimates of v ve}
  |\nabla v_\e|_g^2 + |\nabla^2 v_\e|_g^2 \leq Ce^{-B\psi}\text{ for all }\e > 0.
 \end{equation}
 We can see \eqref{estimates of v ve} by establishing $C^0$ bounds for the $v_\e$ -- it is immediate from the comparison principle \cite[Remark 2.4]{BEGZ} that the $v_\e$ are decreasing as $\e\rightarrow 0$; if $\e_1 > \e_2$, then:
\[
\int_{\{v_{\e_1} < v_{\e_2}\}} e^{\beta_0 v_{\e_2}}\omega^n + 2^{-n}(\e_1 - \e_2)^n\omega^n \leq \int_{\{v_{\e_1} < v_{\e_2}\}} (\alpha + (\e_1/2)\omega + \ddbar v_{\e_2})^n
\]
\[
\leq \int_{\{v_{\e_1} < v_{\e_2}\}} (\alpha + (\e_1/2)\omega + \ddbar v_{\e_1})^n = \int_{\{v_{\e_1} < v_{\e_2}\}} e^{\beta_0 v_{\e_1}}\omega^n \leq \int_{\{v_{\e_1} < v_{\e_2}\}} e^{\beta_0 v_{\e_2}}\omega^n,
\]
which is a contradiction unless $\omega^n(\{v_{\e_1} < v_{\e_2}\}) = 0$, in which case $v_{\e_2} \leq v_{\e_1}$ everywhere, by continuity. In particular, the $v_\e$ are uniformly bounded above by $v_1$. Further, the same argument shows that the $v_\e$ are bounded below by $v_0$ solving:
\begin{equation*}
  \langle(\alpha + \ddbar v_0)^n\rangle = e^{\beta_0 v_0}\omega^n.
\end{equation*}
By \cite[Theorem 6.1]{BEGZ}, $v_0$ has minimal singularities, so there exists a large constant $C$ such that $\psi - C \leq v_0 \leq v_\e$ for all $\e > 0$. The proofs of Lemma \ref{first order estimate} and Proposition \ref{Hess} now apply directly (One can also probably use easier proofs to obtain \eqref{estimates of v ve}, but it follows immediately from what we have done already -- see also \cite[Section 4]{CTW}).

It will be convenient to renormalize the $v_\e$ to be negative by replacing them with $v_\e - \sup_X v_1$, as again $v_\e \leq v_1$. These new $v_\e$ now solve the slightly modified equation:
 \begin{equation}\label{this'}
  \begin{cases}
   (\alpha + (\e/2)\omega + \ddbar v_\e)^n = c_0 e^{\beta_0 v_\e}\omega^n,\\
   \alpha + (\e/2)\omega + \ddbar v_\e > 0,
  \end{cases}
 \end{equation}
for a fixed $c_0 > 0$.

We now pull-back everything to the product space $X\times A$ as in the proof of Theorem \ref{cor} -- define $A$, $t$, $\pi$, $p$ and $f$ as in that proof, and again fix a constant $C$ such that:
\[
  \vp_2 - (C-1)\leq \vp_1 \leq \vp_2 + (C-1),
\]
where we used that $\vp_{1}$ and $\vp_{2}$ have the same singularity type. The geodesic connecting $\vp_{1}$ and $\vp_{2}$ is defined to be
\[
 V := \sup \{ v\in\PSH(X\times A, \pi^{*}\alpha)\ |\ v^{*}|_{\{t = 0\}}\leq \pi^*\vp_1,\ v^{*}|_{\{t = 1\}}\leq \pi^*\vp_2\}.
\]
Note that, if $D_{1}, D_{2}$ are any fixed constants, we have that:
\[
\begin{split}
V +&(D_{2}-D_{1})t+D_{1} = \\
&\sup \{ v\in\PSH(X\times A, \pi^{*}\alpha)\ |\ v^{*}|_{\{t = 0\}}\leq \pi^*\vp_1+D_{1},\ v^{*}|_{\{t = 1\}}\leq \pi^*\vp_2+D_{2}\}.
\end{split}
\]
As $t$ is a smooth function on $X\times A$, we can thus assume without loss of generality that $\sup_X \vp_1 = \sup_X \vp_2 = 0$. After possibly replacing $\psi$ with $\psi - D$ for some large constant $D$, we can also still assume that:
\[
\psi \leq \vp_1, \vp_2.
\]

Now, as in the proof of Theorem \ref{cor}, we define:
\[
  \vp(x,\tau) := \ti{\max}\{\pi^*\vp_1(x) - Ct, \pi^*\vp_2(x) - C(1-t)\} + p^* f (\tau).
\]
To apply Theorem \ref{theorem}, we define the smooth approximation
\[
  \vp_\e(x,\tau) := \ti{\max}\{\pi^*\vp_1(x) - Ct, \pi^*\vp_2(x) - C(1-t), \pi^*v_\e(x) - C_\e\} + p^* f (\tau)
\]
 where $C_\e := -\log(\e/2) + C + 2$. Clearly, $\vp_{\ve}$ decreases pointwise to $\vp$ as $\e$ decreases to $0$. We claim that
\begin{equation}\label{claim 1}
 \pi^*(\alpha + \e\omega) + \ddbar \vp_\e \geq e^{\pi^*\psi}(\pi^*\omega + p^*\omega_{\textrm{Eucl}})
\end{equation}
and
\begin{equation}\label{claim 2}
 |\nabla\vp_{\ve}|_{g}+|\nabla^{2}\vp_{\ve}|_{g} \leq C^{-B\psi}.
\end{equation}
Given these, Corollary \ref{cor2} will follow from Theorem \ref{theorem}.

Let us prove \eqref{claim 1} first. Let $(x_0, \tau_0)\in X\times A$ and $t_0 := -\log |\tau_0|$. If we have
\[
\pi^*\vp_1(x_{0}) - Ct_{0} \leq \pi^*v_\e(x_{0}) - C_\e + 1,
\]
then using that $\psi\leq\vp_{1}$ and $v_\e\leq 0$, it follows that
\[
e^{\pi^{*}\psi} \leq \e/2
\]
near $(x_0, \tau_0)$. Using \eqref{this'}, we see that
\begin{equation*}
\begin{split}
 \pi^*(\alpha + \e\omega) + \ddbar \vp_\e \geq \frac{\ve}{2}\pi^{*}\omega + p^*\omega_{\textrm{Eucl}}
 \geq e^{\pi^*\psi}(\pi^*\omega + p^*\omega_{\textrm{Eucl}}),
\end{split}
\end{equation*}
near $(x_0,\tau_0)$, which implies (\ref{claim 1}) there. If
\[\pi^*\vp_1(x_{0}) - Ct_{0} > \pi^*v_\e(x_{0}) - C_\e+1
\]
on the other hand, then it follows from the definition of $\ti\max$ that:
\[
\vp_\e = \vp
\]
near $(x_0,\tau_0)$, and so:
\[
\pi^*(\alpha + \e\omega) + \ddbar\vp_\e \geq e^{\pi^*\psi}(\pi^*\omega + p^*\omega_{\textrm{Eucl}}),
\]
by our assumptions on $\vp_1$ and $\vp_2$.

We now check (\ref{claim 2}). For ease of notation, write:
\[
b_1 := \pi^*\vp_1 - Ct,\ b_2 := \pi^*\vp_2 - C(1-t),\ b_\e := \pi^{*}v_\e - C_\e.
\]
Recall then the definition of $\ti{\max}$:
 \[
  \ti{\max}(a,b,c) := \int_{\R^3} \max\{y_1,y_2,y_3\}\theta(y_1-a)\theta(y_2-b)\theta(y_3-c)\,dy_1\,dy_2\,dy_3,
 \]
 where here $\theta$, $0\leq \theta\leq 1$ is a cutoff function on $\R$, $\theta \equiv 1$ near $0$, with support in $[-1/2,1/2]$. Thus,
\[
\vp_{\ve} =  \ti{\max}(b_{1},b_{2},b_{\ve})+p^{*}f.
\]
Differentiating $\vp_\e$ once then gives:
\[
\begin{split}
\nabla\vp_{\ve}
= {} & -\nabla b_1\int_{\R^3} \max\{y_1,y_2,y_3\}\theta'(y_1-b_1)\theta(y_2-b_2)\theta(y_3-b_\ve)\,dy_1\,dy_2\,dy_3 \\
& -\nabla b_2\int_{\R^3} \max\{y_1,y_2,y_3\}\theta(y_1-b_1)\theta'(y_2-b_2)\theta(y_3-b_\ve)\,dy_1\,dy_2\,dy_3 \\
& -\nabla b_\ve\int_{\R^3} \max\{y_1,y_2,y_3\}\theta(y_1-b_1)\theta(y_2-b_2)\theta'(y_3-b_\ve)\,dy_1\,dy_2\,dy_3  + \nabla(p^{*}f) \\
=: {} & T_1+T_2+T_3 + O(1),
\end{split}
\]
as $f$ is fixed and smooth. For the first term $T_1$, after changing variables, we have
\[
\begin{split}
|T_1|_g \leq {} & |\nabla b_1|_g\left|\int_{\R^3} \max\{b_1+y_1,b_2+y_2,b_\ve+y_3\}\theta'(y_1)\theta(y_2)\theta(y_3)\,dy_1\,dy_2\,dy_3\right| \\
\leq {} & |\nabla b_1|_g \int_{[-1/2,1/2]^3}|\max\{b_1+y_1,b_2+y_2,b_\ve+y_3\}|\,dy_1\,dy_2\,dy_3,
\end{split}
\]
where we used that $\theta$ has support in $[-1/2,1/2]$. Since $b_1, b_2, b_\e \leq 0$ and $y_1,y_2,y_3\in[-1/2,1/2]$, we have
\[
\begin{split}
0 \geq {} & \max\{b_1+y_1,b_2+y_2,b_\ve+y_3\}-1/2 \\
%= {} & \max\{b_1+y_1-1/2,b_2+y_2-1/2,b_\ve+y_3-1/2\} \\
= {} & -\min\{|b_1+y_1-1/2|,|b_2+y_2-1/2|,|b_\ve+y_3-1/2|\} \\
\geq {} & -\min\{|b_1|+1,|b_2|+1,|b_\ve|+1\} \\
= {} & -\min\{|b_1|,|b_2|,|b_\ve|\}-1,
\end{split}
\]
which implies
\[
|\max\{b_1+y_1,b_2+y_2,b_\ve+y_3\}|
\leq \min\{|b_1|,|b_2|,|b_\ve|\}+1.
\]
Thus,
\[
|T_1|_g \leq C\left(\min\{|b_{1}|,|b_{2}|,|b_{\ve}|\}+1\right)|\nabla b_{1}|_g.
\]
The other terms $T_2$ and $T_3$ can be handled similarly, so that
\[
|\nabla\vp_{\ve}|_{g} \leq C\left(\min\{|b_{1}|,|b_{2}|,|b_{\ve}|\}+1\right)\left(|\nabla b_{1}|+|\nabla b_{2}|+|\nabla b_{\ve}|+1\right).
\]
Using \eqref{estimates of v ve} and the fact that $\vp_{1}$ and $\vp_{2}$ are both exponentially smooth, we then see that
\[
|\nabla\vp_{\ve}|_{g} \leq C(-\pi^{*}\vp_1+C)e^{-B\pi^{*}\psi}\leq C(-\pi^*\psi)e^{-B\pi^*\psi} \leq Ce^{-B\pi^{*}\psi}.
\]
A similar argument shows that $|\nabla^{2}\vp_{\ve}|_{g}\leq Ce^{-B\pi^{*}\psi}$, establishing \eqref{claim 2}.
\end{proof}

We now briefly discuss the case of geodesic rays originating at singular K\"ahler metrics. Recall that the main result of \cite{Mc} can be summarized as follows (see that paper for a more specific statement):

\begin{theorem}\label{above}
 Suppose that $M$ is a compact complex manifold with boundary. Let $\xi$ be a function with analytic singularities on $M$, such that $\xi$ is singular on a divisor $E \subset M$ with $E\cap\partial M = \emptyset$. Let $R_h$ be a smooth form cohomologous to $[E]$, and suppose that $\alpha$ is a closed, smooth, real $(1,1)$-form on $M$ such that $\alpha - R_h$ is $\psi$-big and nef, with $\mathrm{Sing}(\psi)\cap\partial M = \emptyset$. Finally, let $\vp\in \PSH(M,\alpha)$, $\vp \leq \xi$, be smooth near the boundary of $M$ and sufficiently regular. Then the envelope:
 \[
  \sup\{ v\in \PSH(M,\alpha)\ |\ v^{*}|_{\partial M} \leq \vp|_{\partial M},\ v \leq \xi + O(1)\}
 \]
 is in $C^{1,1}_{\textrm{loc}}(M\setminus\mathrm{Sing}(\xi + \psi))$ if the boundary of $M$ is weakly pseudoconcave and $\alpha + \ddbar \vp \geq \delta\omega$ on some neighborhood of $\partial M$.
\end{theorem}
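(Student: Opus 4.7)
The plan is to reduce Theorem~\ref{above} to Theorem~\ref{theorem} via a standard twisting argument that absorbs the obstacle $\xi$ into the reference form. Let $s$ be a global defining section for $E$ and $h$ a smooth Hermitian metric on $\mathcal{O}(E)$ whose Chern curvature is $R_h$. Since $\xi$ has analytic singularities along $E$, after possibly rescaling we may write $\xi = \log|s|_h^2 + g$ for some $g \in C^\infty(M)$; the Poincar\'e--Lelong formula then gives
\[
\alpha + \ddbar \xi = \alpha' + [E]
\]
as currents on $M$, where $\alpha' := \alpha - R_h + \ddbar g$ is a smooth $(1,1)$-form cohomologous to $\alpha - R_h$. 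Consequently $\alpha'$ is $\psi'$-big and nef for some exponentially smooth $\psi'$ that differs from $\psi$ by a smooth function, so $\mathrm{Sing}(\psi') = \mathrm{Sing}(\psi)$.

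Set $\tilde V := V - \xi$ and $\tilde \vp := \vp - \xi$. The obstacle bound $V \leq \xi + O(1)$ translates precisely to $\tilde V$ being bounded above. For any $v$ admissible in the envelope defining $V$, the function $\tilde v := v - \xi$ is $\alpha'$-psh on $M \setminus E$ (since $\alpha' + \ddbar\tilde v = \alpha + \ddbar v \geq 0$ there) and bounded above, hence extends uniquely to an $\alpha'$-psh function on $M$. Because $E \cap \partial M = \emptyset$, $\tilde \vp$ is smooth near $\partial M$ and the boundary condition transfers cleanly, giving the identification
\[
\tilde V = \sup\{\tilde v \in \PSH(M, \alpha') \mid \tilde v^*|_{\partial M} \leq \tilde \vp|_{\partial M}\}.
\]
On the collar of $\partial M$ where $\xi$ is smooth, the currents $\alpha + \ddbar\vp$ and $\alpha' + \ddbar\tilde \vp$ agree, so the hypothesis $\alpha + \ddbar\vp \geq \delta\omega$ near $\partial M$ becomes $\alpha' + \ddbar\tilde\vp \geq \delta\omega$ on the same neighborhood. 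Following the recipe indicated in the paragraph after Theorem~\ref{theorem}, one then takes
\[
\tilde\vp_\e := \widetilde{\max}\{\tilde\vp, v_\e - C_\e\},
\]
where $v_\e \leq 0$ are the smooth potentials produced by the nef condition on $\alpha'$ and $C_\e$ is chosen large enough that $\tilde\vp_\e = \tilde\vp$ on a fixed collar of $\partial M$. Conditions~(a) and~(b) of Theorem~\ref{theorem} then hold on this collar, which is all the argument in Section~\ref{bignef} actually uses.

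Applying Theorem~\ref{theorem} now gives $\tilde V \in C^{1,1}_{\mathrm{loc}}(M \setminus \mathrm{Sing}(\psi'))$; since $V = \tilde V + \xi$ and $\xi$ is smooth away from $\mathrm{Sing}(\xi)$, we conclude $V \in C^{1,1}_{\mathrm{loc}}(M \setminus \mathrm{Sing}(\xi+\psi))$, as desired. The main obstacle in this proof is not analytical but bookkeeping---carefully tracking how the bigness/nefness data and the boundary positivity transform under the $\xi$-twist, and confirming that $\tilde\vp_\e$ retains both smoothness near $\partial M$ and global $\alpha'$-psh regularity. The substantive $C^{1,1}$ analysis (the new boundary Hessian estimate and the interior estimates tolerating unbounded reference data) is entirely encapsulated in Theorem~\ref{theorem}, so once the reduction is verified the result is immediate, recovering and slightly strengthening the argument of \cite{Mc}.
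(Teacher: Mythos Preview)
The paper does not prove Theorem~\ref{above}; it is quoted as a summary of the main result of \cite{Mc}, and the subsequent unnumbered theorem merely records that Section~\ref{bignef} together with ``the method in \cite{Mc}'' yields an improvement. So there is no in-paper proof to compare against directly.

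That said, your reduction is correct and is exactly the mechanism the paper alludes to. The $\xi$-twist $v\mapsto v-\xi$ sets up a bijection between $\alpha$-psh functions with $v\le\xi+O(1)$ and $\alpha'$-psh functions bounded above (extension across $E$ being automatic since $E$ is pluripolar and $\tilde v$ is bounded above), and the boundary data transfer because $E\cap\partial M=\emptyset$. Your choice $\tilde\vp_\e=\widetilde{\max}\{\tilde\vp,\,v_\e-C_\e\}$ is precisely what the remark following Theorem~\ref{theorem} prescribes for recovering \cite[Theorem~1.3]{Mc}. One small sharpening: the paper says only condition~(a) may be relaxed to a collar of $\partial M$; condition~(b) is used globally, but only in the weak form $\alpha'+\e\omega+\ddbar\tilde\vp_\e\ge(\e/2)\omega$ (to make $\tilde\vp_\e$ a subsolution of \eqref{ma}), while the full $e^F\omega$ lower bound is invoked only in the boundary Hessian estimate (Proposition~\ref{boundary second order estimate}). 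Your $\widetilde{\max}$ construction supplies the global $(\e/2)\omega$ bound once the $v_\e$ are taken from the nef condition for $\alpha'+(\e/2)\omega$ rather than $\alpha'+\e\omega$, so the argument goes through.
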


In \cite{Mc}, the above theorem was used to prove $C^{1,1}$ regularity of certain geodesic rays originating at a K\"ahler metric by taking $M = X\times \mathbb{D}$. Here, we simply remark that the results in Section \ref{bignef} can be combined with the method in \cite{Mc} to improve Theorem \ref{above}, and this can then be used to prove regularity of certain geodesic rays on a singular K\"ahler variety in the exact same way:

\begin{theorem}
Theorem \ref{above} is still valid if we allow $\mathrm{Sing}(\psi)$ to intersect $\partial M$ and we weaken the assumptions that $\vp$ be smooth and strictly $\alpha$-psh near the boundary to just assuming the existence of a family of $\vp_\e$ satisfying conditions (a) and (b) in Theorem \ref{theorem}.
\end{theorem}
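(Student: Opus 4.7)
The plan is to follow the proof of Theorem \ref{above} in \cite{Mc} verbatim, substituting the sharper estimates established in Section \ref{bignef} for those used in that paper. Recall that in \cite{Mc}, one sets $\alpha' := \alpha - R_h$ (so that $[\alpha']$ is a genuine big and nef class) and approximates the envelope by a Berman-type family $u_{\e,\beta}$ solving Dirichlet problems for non-degenerate complex Monge-Amp\`ere equations on $M$, in which a smooth quasi-psh potential for the current $[E] - R_h$ enters on the right-hand side to encode the obstacle $\xi$. The role played by the smooth boundary data $\vp$ in \cite{Mc} is played here by the approximating family $\vp_\e$ supplied by hypothesis (a); hypothesis (b) furnishes the subsolution needed to guarantee existence of $u_{\e,\beta}$, exactly as in the proof of Theorem \ref{theorem}.

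With this setup in place, the $L^{\infty}$ bound follows as in Theorem \ref{theorem}, comparing $u_{\e,\beta}$ against $\vp_\e$ from below and against the harmonic obstacle $h_\e$ (the solution of $\Delta_{2\omega} h_\e = -n$ with boundary data $\vp_\e|_{\partial M}$) from above. The gradient bound of Lemma \ref{first order estimate} goes through with only cosmetic modifications. The essential new input is the boundary Hessian bound: I would apply Proposition \ref{boundary second order estimate} directly in place of the analogous estimate in \cite{Mc}. This proposition needs neither strict positivity of $\alpha' + \ddbar \vp_\e$ near the boundary nor regularity of the boundary data beyond that asserted in (a); it relies only on the degenerate positivity \eqref{key'}, which is exactly hypothesis (b). This is precisely what allows us to drop the requirement that $\vp$ be smooth and strictly $\alpha$-psh near $\partial M$ and to permit $\mathrm{Sing}(\psi)$ to meet $\partial M$. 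Once the boundary Hessian is controlled, Proposition \ref{Hess} delivers the interior Hessian bound, and letting $\beta \to \infty$ followed by $\e \to 0$ yields $V \in C^{1,1}_{\textrm{loc}}(M \setminus \mathrm{Sing}(\xi + \psi))$.

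The main obstacle to overcome is to verify that the obstacle term coming from $\xi$ — singular along the divisor $E \subset M^{\circ}$ and present on the right-hand side of the approximating equations — does not disrupt either the boundary estimate of Proposition \ref{boundary second order estimate} or the interior Hessian estimate of Proposition \ref{Hess}. Since $\mathrm{Sing}(\xi) \cap \partial M = \emptyset$ by hypothesis, $\xi$ and all its derivatives are uniformly bounded in a fixed neighborhood of $\partial M$, so the additional terms it contributes to Steps 1--3 of the proof of Proposition \ref{boundary second order estimate} are harmless and the computations carry over essentially line by line. For the interior Hessian estimate, one replaces the function $\ti{\psi}$ produced by Proposition \ref{forms} by $\ti\psi + c\,\xi$ for a suitable constant $c > 0$, so that $C^{1,1}$ control is lost exactly on $\mathrm{Sing}(\xi+\psi)$ as claimed, and the maximum principle argument of Proposition \ref{Hess} applies verbatim. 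The remainder of the argument is routine bookkeeping, as the authors indicate in the paragraph preceding the theorem statement.
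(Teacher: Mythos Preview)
Your proposal is correct and follows precisely the approach the paper intends: the paper itself provides no detailed argument, merely remarking that ``the results in Section~\ref{bignef} can be combined with the method in \cite{Mc}'' to obtain the improvement. Your sketch fleshes out exactly this combination, correctly identifying that Proposition~\ref{boundary second order estimate} is the new ingredient replacing the corresponding boundary estimate in \cite{Mc}, and that the obstacle $\xi$ causes no trouble near $\partial M$ since $\mathrm{Sing}(\xi)\cap\partial M=\emptyset$.
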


\appendix

\section{Estimates for $\Delta_\omega$}

\begin{lemma}\label{applem}
Let $\vp_\e$ be as in Theorem \ref{theorem}, and let $h_\e$ be the solutions to:
\[
 \begin{cases}
  \Delta_{2\omega} h_{\ve} = -n, \\
  h_{\ve}|_{\partial M} = \vp_{\ve}|_{\partial M}.
 \end{cases}
\]
for all $\e > 0$. Then there exist positive constants $B, C$ such that:
\begin{equation*}
|\nabla h_\e|_{g} + |\nabla^2 h_\e|_{g} \leq Ce^{-B\psi}.
\end{equation*}
\end{lemma}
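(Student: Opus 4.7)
The plan is to reduce the problem to a study of the difference $w_\e := h_\e - \vp_\e$, which carries zero boundary data and satisfies the linear equation
\begin{equation*}
\Delta_{2\omega}w_\e = F_\e := -n - \Delta_{2\omega}\vp_\e, \qquad w_\e|_{\p M} = 0.
\end{equation*}
By hypothesis (a) we have $|\nabla^{2}\vp_\e|_\omega \leq Ce^{-B_0\psi}$, so that $|F_\e|\leq Ce^{-B_0\psi}$. Moreover $\vp_\e$ is a subsolution, since $\Delta_{2\omega}\vp_\e \geq -\tr{2\omega}{(\alpha+\e\omega)} \geq -n$ using $\alpha\leq\omega$, and the weak maximum principle yields $\vp_\e \leq h_\e$; combined with $h_\e \leq h_1 \leq C$ (already noted in the paper) this gives the baseline bound $\psi \leq h_\e \leq C$.

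For the gradient estimate I would apply a Bernstein-type maximum principle to
\begin{equation*}
Q := e^{-B\psi}|\nabla h_\e|_g^2,
\end{equation*}
with $B$ large to be chosen. The key algebraic simplification is that $\Delta_{2\omega}h_\e = -n$ is constant, so $\nabla(\Delta_{2\omega}h_\e)=0$ and the Bochner-type identity reduces to $\Delta_{2\omega}|\nabla h_\e|_g^2 \geq |\nabla^{2}h_\e|_g^2 - C|\nabla h_\e|_g^2$. Differentiating the weight $e^{-B\psi}$ produces coefficients involving $|\nabla\psi|_g^2$ and $\Delta_{2\omega}\psi$, both dominated by $Ce^{-B_0\psi}$ by the exponential smoothness bounds \eqref{derivatives}; with $B\gg B_0$, all cross-terms can be absorbed by the Hessian term via Cauchy--Schwarz, so any interior maximum of $Q$ is uniformly bounded. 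A boundary maximum is controlled because $h_\e = \vp_\e$ on $\p M$, which gives tangential derivative control directly, while the normal component is handled by comparing $w_\e$ against a local barrier of the form $\pm Ce^{-B_0\psi}r$, where $r$ is a defining function of $M$; weak pseudoconcavity of $\p M$ ensures $\Delta_{2\omega}r \leq 0$ near the boundary, which makes the barrier work.

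For the Hessian bound I would establish control on $\p M$ first and then propagate inward. On the boundary, the tangent-tangent derivatives of $h_\e$ coincide with those of $\vp_\e$ and are bounded by $Ce^{-B_0\psi}$ directly. The tangent-normal derivatives are obtained by a local barrier argument modeled on Step 2 of Proposition \ref{boundary second order estimate} but considerably simpler, since the equation is linear and carries no Monge--Amp\`ere term. The sole remaining normal-normal derivative is then recovered algebraically from $\Delta_{2\omega}h_\e = -n$ in terms of the other, already-bounded, mixed second derivatives. Finally, the interior Hessian estimate follows from a standard Bernstein maximum principle on a quantity such as $e^{-B\psi}(\Delta_g h_\e + A)$ (or equivalently on the largest eigenvalue of $\nabla^{2}h_\e$), again using exponential smoothness to absorb all weight-derived terms once $B$ is chosen large.

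The main technical obstacle, I expect, will be the boundary tangent-normal estimate at points where $\mathrm{Sing}(\psi)$ meets $\p M$, because the barrier must simultaneously incorporate the blow-up of $\vp_\e$ and the desired $e^{-B\psi}$ rate, and must use the pseudoconcavity of $\p M$ with the correct sign. Fortunately the algebraic structure is strictly simpler than in Proposition \ref{boundary second order estimate}, since no $F$-factor appears and no Monge--Amp\`ere-type positivity beyond that already furnished by $\psi$-bigness is required.
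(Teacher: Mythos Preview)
Your overall strategy matches the paper's --- reduce to $\ti h_\e := h_\e - \vp_\e$, get a boundary normal estimate by a barrier, then run a Bernstein-type maximum principle with exponential weights in $\psi$ --- but two concrete errors keep the argument from closing.

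First, the sign of your weight is backwards. Since $\psi\leq 0$ (with $\psi=-\infty$ on $\mathrm{Sing}(\psi)$), your quantity $Q=e^{-B\psi}|\nabla h_\e|_g^2$ blows up wherever $\psi\to-\infty$, so its maximum need not be attained; and even if it were, $Q\leq C$ would give $|\nabla h_\e|_g^2\leq Ce^{B\psi}\leq C$, a \emph{uniform} gradient bound, which is generally false because already $|\nabla^T\vp_\e|_g$ on $\partial M$ is only controlled by $Ce^{-B_0\psi}$. The paper instead works with $Q=e^{2B\psi}|\nabla h_\e|_g^2+e^{B\psi}h_\e^2$ (and $e^{2B\psi}|\nabla^2 h_\e|_g^2+e^{B\psi}|\nabla h_\e|_g^2$ for the Hessian). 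The added lower-order term is not decorative: the weight derivative produces a bad term of the form $-CB^2e^{(2B-C)\psi}|\nabla h_\e|_g^2$ that the Hessian-squared term alone cannot kill; the good term $+e^{B\psi}|\nabla h_\e|_g^2$ coming from the lower-order piece is what absorbs it once $B$ is large (after normalizing $\sup_M\psi=-1$). The same remark applies to your interior Hessian quantity.

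Second, your barrier step is based on an incorrect claim: weak pseudoconcavity controls only the Levi form $\sqrt{-1}\,\partial\bar\partial r|_{T^{1,0}\partial M}$, not the full trace $\Delta_{2\omega}r$, which also sees the normal--normal component of the complex Hessian of $r$; there is no reason for $\Delta_{2\omega}r\leq 0$. The paper does not use a local defining-function barrier here at all. It introduces the \emph{global} function $b$ solving $\Delta_\omega b=-1$, $b|_{\partial M}=0$, and shows $e^{B\psi}(\ti h_\e+\ti h_\e^{\,2})-b$ is subharmonic for $B$ large, giving $e^{B\psi}\ti h_\e\leq b$ and hence $|D_{2n}\ti h_\e|\leq Ce^{-B\psi}$ on $\partial M$. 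In fact pseudoconcavity is not invoked anywhere in the proof of this lemma.
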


\begin{proof}
First, without loss of generality we may scale $\psi$ such that:
\[
 \sup_M\psi = -1.
\]
Now note that by the maximum principle, we have that the $h_\e$ are decreasing as $\e\rightarrow 0$, and that (cf. (\ref{zero order estimate})):
\begin{equation}\label{appC0}
\psi \leq \vp_\e \leq h_\e \leq h_1.
\end{equation}

Let $b$ be the solution to:
\[
 \begin{cases}
  \Delta_{\omega} b = -1,\\
  b|_{\partial M} = 0,
 \end{cases}
\]
and define:
\[
 \ti{h}_\e := h_\e - \vp_\e \geq 0.
\]
We claim that there exists a constant $B > 0$ such that:
\[
 h_\e \leq \vp_\e + e^{-B\psi}b  \ \ \text{on $M\setminus \textrm{Sing}(\psi)$}.
\]
Combining this with the lower bound in \eqref{appC0}, it follows that $|\nabla h_{\ve}|_{g}\leq Ce^{-B\psi}$ at the boundary. To see the claim, using (\ref{derivatives}), we compute:
\begin{equation*}
\begin{split}
\Delta_{\omega}(e^{B\psi}(\ti{h}_{\ve}+\ti{h}_{\ve}^{2}) - b)
& \geq  e^{B\psi}(1+2\ti{h}_{\ve})\Delta_{\omega}\ti{h}_{\ve}+2e^{B\psi}|\nabla\ti{h}_{\ve}|_{g}^{2} \\
& -BCe^{(B-C)\psi}|\nabla\ti{h}_{\ve}|_{g}-CB^{2}e^{(B-C)\psi}(\ti{h}_{\ve}+\ti{h}_{\ve}^{2}) + 1.
\end{split}
\end{equation*}
Since $\Delta_{2\omega}h_{\ve}=-n$, it is clear that $\Delta_{\omega}h_{\ve}=-2n$. Combining this with (\ref{appC0}) and $|\Delta_{\omega}\vp_{\ve}|\leq Ce^{-C\psi}$, we see that
\begin{equation*}
  |\ti{h}_{\ve}| + |\Delta_{\omega}\ti{h}_{\ve}| \leq Ce^{-C\psi}.
\end{equation*}
By the Cauchy-Schwarz inequality, we have
\begin{equation*}
\begin{split}
& \Delta_{\omega}(e^{B\psi}(\ti{h}_{\ve}+\ti{h}_{\ve}^{2}) - b) \\
\geq {} & 2e^{B\psi}|\nabla\ti{h}_{\ve}|_{g}^{2}-e^{B\psi}(|\nabla\ti{h}_{\ve}|_{g}^{2}+CB^{2}e^{-2C\psi})-CB^{2}e^{(B-C)\psi} + 1 \\
\geq {} & -CB^{2}e^{(B-C)\psi} + 1.
\end{split}
\end{equation*}
Now since $\sup_{M}\psi = -1$, we may choose $B$ sufficiently large such that
\begin{equation*}
  \Delta_{\omega}(e^{B\psi}(\ti{h}_{\ve}+\ti{h}_{\ve}^{2})-b) \geq -CB^{2}e^{(B-C)\psi}+1 > 0.
\end{equation*}
It follows then from the maximum principle that
\begin{equation*}
  e^{B\psi}\ti{h}_{\ve} \leq b,
\end{equation*}
as claimed.

To bound the gradient on the interior, we consider the quantity:
\begin{equation*}
  Q = e^{2B\psi}|\nabla h_{\ve}|_{g}^{2} + e^{B\psi}h_{\ve}^{2},
\end{equation*}
where $B$ is a constant to be determined. Suppose that $Q$ achieves a maximum at $x_{0}$. If $x_{0}\in\de M$, then we are already done. Otherwise, we choose holomorphic normal coordinates at $x_{0}$ for $g$ (the Riemannian metric corresponding to $\omega$), so that:
\begin{equation*}
  \sum_{i}(h_{\ve})_{ki\ov{i}} = \sum_{i}(h_{\ve})_{i\ov{i}k} = (\Delta_{\omega}h_{\ve})_{k} = 0.
\end{equation*}
This implies
\begin{equation}\label{part ii equ 4}
\begin{split}
        & \Delta_{\omega}(e^{2B\psi}|\nabla h_{\ve}|_{g}^{2}) \\
\geq {} & 2e^{2B\psi}\textrm{Re}\left(\sum_{i,k}(h_{\ve})_{ki\ov{i}}(h_{\ve})_{\ov{k}}\right)+e^{2B\psi}|\nabla^{2}h_{\ve}|_{g}^{2} \\[1mm]
        & -4Be^{2B\psi}|\nabla\psi|_{g}|\nabla h_{\ve}|_{g}|\nabla^{2}h_{\ve}|_{g}-CB^{2}e^{(2B-C)\psi}|\nabla h_{\ve}|_{g}^{2}\\[2mm]
\geq {} & \frac{1}{2}e^{2B\psi}|\nabla^{2}h_{\ve}|_{g}^{2}-CB^{2}e^{(2B-C)\psi}|\nabla h_{\ve}|_{g}^{2}.
\end{split}
\end{equation}
By the maximum principle, at $x_{0}$ we have
\begin{equation*}
\begin{split}
0 \geq {} & \Delta_{\omega}Q = \Delta_{\omega}( e^{2B\psi}|\nabla h_{\ve}|_{g}^{2})+\Delta_{\omega}(e^{B\psi}h_{\ve}^{2}) \\
  \geq {} & -CB^{2}e^{(2B-C)\psi}|\nabla h_{\ve}|_{g}^{2}+e^{B\psi}|\nabla h_{\ve}|_{g}^{2}-CB^{2}e^{(B-C)\psi}.
\end{split}
\end{equation*}
We can now choose $B$ sufficiently large such that
\begin{equation*}
CB^{2}e^{(2B-C)\psi} \leq \frac{1}{2}e^{B\psi},
\end{equation*}
implying
\begin{equation*}
  |\nabla h_{\ve}|_{g}^{2} \leq Ce^{-C\psi} \ \text{at $x_{0}$}.
\end{equation*}
Using \eqref{appC0} then shows that $Q(x_{0})\leq C$, as desired.

Finally, we bound the Hessian of the $h_\e$. We establish the boundary estimate first. The tangent-tangent derivative estimate is obvious. The tangent-normal derivatives can be bounded in a manner analogous to the proof of Proposition \ref{boundary second order estimate}; very briefly, one considers the quantities:
\begin{equation*}
  \xi = b+e^{B\psi}\left(\mu|z|^{2}-e^{B\psi}\sum_{\alpha=1}^{2n-1}(\partial_{\alpha}\ti{h}_{\ve})^{2}\right),
\end{equation*}
\begin{equation*}
  w = b + e^{B\psi}(\mu|z|^{2}-e^{B\psi}|D_{\gamma}\ti{h}_{\ve}|-e^{B\psi}|D_{\gamma}\ti{h}_{\ve}|^{2})+\xi,
\end{equation*}
on a ball $B_R(p)$ of fixed radius $R$, with $p\in\partial M$, $1\leq\gamma\leq 2n-1$, and $\mu$ and $B$ constants. Choosing $\mu$ sufficiently large,
\begin{equation*}
  \xi \geq 0 ~\text{on }\partial(B_R\cap M).
\end{equation*}
On $(B_{R}\cap M)\setminus\mathrm{Sing}(\psi)$, we compute
\begin{equation*}
\Delta_{\omega}\xi
\leq  -1-\frac{e^{2B\psi}}{C}\sum_{\alpha=1}^{2n-1}\sum_{\beta=1}^{2n}|\partial_{\alpha}\partial_{\beta}\ti{h}_{\ve}|^{2}
+CBe^{(2B-C)\psi}\sum_{\alpha,\beta=1}^{2n}|\partial_{\alpha}\partial_{\beta}\ti{h}_{\ve}|-CB^{2}e^{(B-C)\psi}.
\end{equation*}
Thanks to $\Delta_{\omega}\ti{h}_{\ve}=-2n-\Delta_{\omega}\vp_{\ve}$,
\begin{equation*}
\sum_{\alpha,\beta=1}^{2n}|\partial_{\alpha}\partial_{\beta}\ti{h}_{\ve}|^{2}
\leq C\sum_{\alpha=1}^{2n-1}\sum_{\beta=1}^{2n}|\partial_{\alpha}\partial_{\beta}\ti{h}_{\ve}|^{2}+Ce^{-C\psi}.
\end{equation*}
Hence, at the expense of increasing $B$,
\begin{equation}\label{appC2}
\Delta_{\omega}\xi
\leq  -\frac{1}{2}-\frac{e^{2B\psi}}{C}\sum_{\alpha=1}^{2n-1}\sum_{\beta=1}^{2n}|\partial_{\alpha}\partial_{\beta}\ti{h}_{\ve}|^{2}.
\end{equation}
This shows that $\xi$ cannot have an interior minimum point, and so $\xi\geq0$ on $B_{R}\cap M$. Now one can use (\ref{appC2}) and a similar argument to Proposition \ref{boundary second order estimate} to show that $w\geq0$ on $B_{R}\cap M$, which implies the tangent-normal derivative estimate. We refer the reader to the proof of Proposition \ref{boundary second order estimate} for more details.

 Finally, the normal-normal derivative is bounded just by the fact that $\Delta_{\omega}h_{\ve}=-2n$. Thus, we have the second order estimate on the boundary:
\begin{equation*}
  |\nabla^{2}h_{\ve}|_{g} \leq Ce^{-C\psi} ~ \text{on $\de M$}.
\end{equation*}

To bound the Hessian everywhere now, we consider the quantity:
\begin{equation*}
  Q = e^{2B\psi}|\nabla^{2}h_{\ve}|_{g}^{2} + e^{B\psi}|\nabla h_{\ve}|_{g}^{2},
\end{equation*}
where $B$ is a constant to be determined. Let $x_{0}$ be an interior maximum point of $Q$. Choosing holomorphic normal coordinates for $g$ at $x_{0}$ gives
\begin{equation*}
  \sum_{i}(h_{\ve})_{k\ov{l}i\ov{i}} = (\Delta_{\omega}h_{\ve})_{k\ov{l}}-\de_{k}\de_{\ov{l}}(g^{i\ov{j}})(h_{\ve})_{i\ov{j}} = -\de_{k}\de_{\ov{l}}(g^{i\ov{j}})(h_{\ve})_{i\ov{j}},
\end{equation*}
which implies $|\sum_{i}(h_{\ve})_{k\ov{l}i\ov{i}}|\leq C|\nabla^{2}h_{\ve}|_{g}$. Similarly, we also have $|\sum_{i}(h_{\ve})_{kli\ov{i}}|\leq C|\nabla^{2}h_{\ve}|_{g}$. Combining this with the Cauchy-Schwarz inequality, it follows that
\begin{equation*}
\begin{split}
        & \Delta_{\omega}(e^{2B\psi}|\nabla^{2}h_{\ve}|_{g}^{2}) \\
\geq {} & 2e^{2B\psi}\textrm{Re}\left(\sum_{i,k,l}(h_{\ve})_{k\ov{l}i\ov{i}}(h_{\ve})_{\ov{k}l}+\sum_{i,k,l}(h_{\ve})_{kli\ov{i}}(h_{\ve})_{kl}\right) \\
        & +e^{2B\psi}|\nabla^{3}h_{\ve}|_{g}^{2} -4Be^{2B\psi}|\nabla\psi|_{g}|\nabla^{2}h_{\ve}|_{g}|\nabla^{3}h_{\ve}|_{g}
          -CB^{2}e^{(2B-C)\psi}|\nabla^{2}h_{\ve}|_{g}^{2} \\[2mm]
\geq {} & -Ce^{2B\psi}|\nabla^{2}h_{\ve}|_{g}^{2} -CB^{2}e^{2B\psi}|\nabla\psi|_{g}^{2}|\nabla^{2}h_{\ve}|_{g}^{2}
          -CB^{2}e^{(2B-C)\psi}|\nabla^{2}h_{\ve}|_{g}^{2} \\[2mm]
\geq {} & -CB^{2}e^{(2B-C)\psi}|\nabla^{2}h_{\ve}|_{g}^{2}.
\end{split}
\end{equation*}
Using the maximum principle and (\ref{part ii equ 4}), at $x_{0}$, we have
\begin{equation*}
\begin{split}
0 \geq {} & \Delta_{\omega}Q \\
    =  {} & \Delta_{\omega}(e^{2B\psi}|\nabla^{2}h_{\ve}|_{g}^{2})+\Delta_{\omega}(e^{B\psi}|\nabla h_{\ve}|_{g}^{2}) \\
  \geq {} &  -CB^{2}e^{(2B-C)\psi}|\nabla^{2}h_{\ve}|_{g}^{2}+\frac{1}{2}e^{B\psi}|\nabla^{2}h_{\ve}|_{g}^{2}-CB^{2}e^{(B-C)\psi}|\nabla h_{\ve}|_{g}^{2}.
\end{split}
\end{equation*}
After choosing $B\geq C$ sufficiently large, we see that
\begin{equation*}
CB^{2}e^{(2B-C)\psi} \leq \frac{1}{4}e^{B\psi} ~ \text{and} ~ e^{(B-C)\psi}|\nabla h_{\ve}|_{g}^{2}\leq C.
\end{equation*}
It then follows that
\begin{equation*}
  |\nabla^{2}h_{\ve}|_{g}^{2} \leq Ce^{-C\psi} \ \text{at $x_{0}$},
\end{equation*}
which implies $Q(x_{0})\leq C$, as desired.
\end{proof}

\begin{lemma}\label{applem 1}
Assume we are in the situation in Lemma \ref{applem}. For each integer $k\geq3$, there exist positive constants $B_{k}$, $C_{k}$ such that
\[
|\nabla^{k}h_{\ve}|_{g} \leq C_{k}e^{-B_{k}\psi}.
\]
\end{lemma}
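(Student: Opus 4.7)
The plan is to prove Lemma \ref{applem 1} by induction on $k$, with Lemma \ref{applem} providing the base cases $k = 1, 2$. The scheme mirrors that of Lemma \ref{applem}: first establish the estimate on $\partial M$ via barrier arguments, then globalize via the maximum principle applied to a weighted test quantity.

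\textbf{Interior step.} Assuming the bound for orders $\leq k-1$, I would consider
\[
Q_k := e^{2B_k\psi}|\nabla^k h_\e|_g^2 + e^{B_k\psi}|\nabla^{k-1} h_\e|_g^2,
\]
with $B_k$ large, in particular $B_k > 2B_{k-1}$. At an interior maximum $x_0$, working in holomorphic normal coordinates for $g$ and using that $\Delta_\omega h_\e = -2n$ is constant, the Bochner--Weitzenb\"ock formula yields
\[
\Delta_\omega|\nabla^k h_\e|_g^2 \geq |\nabla^{k+1} h_\e|_g^2 - C|\nabla^k h_\e|_g^2 - Ce^{-C\psi},
\]
where the last term collects the curvature contributions involving derivatives of $h_\e$ of orders $\leq k-1$, bounded by the inductive hypothesis. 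Weighting by $e^{2B_k\psi}$, applying \eqref{derivatives}, and adding the correction $e^{B_k\psi}|\nabla^{k-1} h_\e|_g^2$ -- following the chain that culminates in \eqref{part ii equ 4} -- produces $Q_k(x_0) \leq C$ once $B_k$ is chosen large enough.

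\textbf{Boundary step.} This proceeds in three stages analogous to Proposition \ref{boundary second order estimate} and to the boundary argument in Lemma \ref{applem}. Tangential-tangential derivatives are handled by differentiating the Dirichlet equation along tangential vector fields, giving linear elliptic equations for $\partial_\alpha h_\e$ whose boundary data are controlled inductively, together with barriers of the form
\[
\xi = b + e^{B_k\psi}\bigl(\mu|z|^2 - e^{B_k\psi}T^2\bigr),
\]
where $T$ is the tangential derivative of $h_\e$ under consideration and $b$ is as in Lemma \ref{applem}. Tangent-normal derivatives then follow from a further barrier
\[
w = b + e^{B_k\psi}\bigl(\mu'|z|^2 - e^{B_k\psi}|D_\gamma T| - e^{B_k\psi}(D_\gamma T)^2\bigr) + \xi,
\]
as in Lemma \ref{applem}, and the purely normal derivatives are recovered from $\Delta_\omega h_\e = -2n$ combined with bounds on the other directional components.

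\textbf{Main obstacle.} The principal difficulty is the bookkeeping of the commutator terms generated by repeatedly differentiating $\Delta_\omega h_\e = -2n$: each commutator introduces curvature factors contracted against lower-order derivatives of $h_\e$, and these must all be absorbed either by $|\nabla^{k+1} h_\e|^2$ via Cauchy--Schwarz or by the weighted inductive bounds. Choosing $B_k$ to grow fast enough with $k$ makes this absorption consistent at both the interior and boundary steps, so that the induction closes.
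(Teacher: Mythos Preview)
Your approach is correct in outline but takes a genuinely different route from the paper's. You propose a direct induction on $k$ via Bochner--Weitzenb\"ock identities and iterated boundary barriers, replicating the $k=2$ argument of Lemma~\ref{applem} at each level. The paper instead exploits the exponential smoothness of $\psi$: since $e^{C_0\psi}$ is globally smooth for some $C_0$, one multiplies $h_\e$ by a high power $e^{BC_0\psi}$, computes $\Delta_\omega(e^{BC_0\psi}h_\e)$ explicitly via the product rule, and observes that for $B$ large enough the right-hand side is bounded in $C^1(M)$ by the $C^2$ estimates of Lemma~\ref{applem}. A single Schauder estimate then gives $\|e^{BC_0\psi}h_\e\|_{C^{2,1/2}(M)}\leq C$; increasing $B$ and iterating yields all higher orders. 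The paper's method is considerably shorter --- the entire hierarchy of boundary barrier constructions you describe is replaced by off-the-shelf linear elliptic theory, and the commutator bookkeeping you flag as the main obstacle is absorbed automatically into the product-rule expansion of $\Delta_\omega(e^{BC_0\psi}h_\e)$. Your route has the virtue of being self-contained and purely maximum-principle based, but carrying out the boundary step for $k\geq 3$ (especially the tangent-normal barriers) would be genuinely laborious. Note also that both approaches tacitly require higher-order control of $\vp_\e$ on $\partial M$ beyond the literal $C^2$ bound in condition~(a) of Theorem~\ref{theorem}; the paper invokes $\|e^{BC_0\psi}\vp_\e\|_{C^k(\partial M)}\leq C$, which holds in the applications because $\vp_\e$ is exponentially smooth there.
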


\begin{proof}
Since $\psi$ is exponentially smooth, there exists a constant $C_{0}$ such that $e^{C_{0}\psi}$ is smooth. Using $\Delta_{\omega}h_{\ve}=-2n$, it is clear that
\begin{equation}\label{applem 1 equ 1}
\begin{split}
\Delta_{\omega}(e^{BC_{0}\psi}h_{\ve}) = {}
& -2ne^{BC_{0}\psi}+B(B-1)h_{\ve}e^{(B-2)C_{0}\psi}|\nabla(e^{C_{0}\psi})|_{g}^{2} \\
& +Bh_{\ve}e^{(B-1)C_{0}\psi}\Delta_{\omega}(e^{C_{0}\psi})
+2Be^{(B-1)C_{0}\psi}\textrm{Re}\langle \nabla(e^{C_{0}\psi}),\ov{\nabla}h_{\ve}\rangle,
\end{split}
\end{equation}
where $B$ is a constant to be determined. Using Lemma \ref{applem} and choosing $B$ sufficiently large, we obtain
\[
\|\Delta_{\omega}(e^{BC_{0}\psi}h_{\ve})\|_{C^{1}(M)} +
\|e^{BC_{0}\psi}\vp_{\ve}\|_{C^{3}(\de M)} \leq C.
\]
Applying the Schauder estimate, it follows that
\[
\|e^{BC_{0}\psi}h_{\ve}\|_{C^{2,\frac{1}{2}}(M)} \leq C.
\]
At the expense of increasing $B$, it follows from (\ref{applem 1 equ 1}) that
\[
\|\Delta_{\omega}(e^{BC_{0}\psi}h_{\ve})\|_{C^{1,\frac{1}{2}}(M)}+
\|e^{BC_{0}\psi}\vp_{\ve}\|_{C^{4}(\de M)} \leq C.
\]
Using the Schauder estimate again, we obtain
\[
\|e^{BC_{0}\psi}h_{\ve}\|_{C^{3,\frac{1}{2}}(M)} \leq C.
\]
Repeating the above argument, for any $k\geq 3$, there exist constants $B_{k}$, $C_{k}$ such that
\[
\|e^{B_{k}\psi}h_{\ve}\|_{C^{k,\frac{1}{2}}(M)} \leq C_{k},
\]
as required.
\end{proof}

\end{document}